\documentclass[a4paper]{amsart}
\usepackage[margin=1in,headheight=13.6pt]{geometry}
\usepackage{latexsym}\usepackage{ifthen}\usepackage[leqno]{amsmath}
\usepackage{enumerate}\usepackage{calc}\usepackage{hyphenat}
\usepackage{anyfontsize}
\usepackage{moresize}
\usepackage{mathtools}
\usepackage{bbm}
\usepackage{amstext,amsbsy,amsopn,amsthm,amsgen, amsmath}
\usepackage{amsfonts,amscd,amsxtra,upref}
\usepackage{graphicx,varwidth}
\graphicspath{{/Users/burii/Library/CloudStorage/Dropbox/SCDB/Hodge number of Kummer fibrations/}}
\usepackage{wrapfig}
\usepackage{float}
\usepackage{amssymb,pdflscape,changepage}

\usepackage{array,amsmath}

\usepackage{afterpage}
\usepackage{comment}
\usepackage{expl3,xparse}

\usepackage{multirow}
\usepackage[table]{xcolor}
\usepackage{array,booktabs}

\usepackage{tabularx}
\usepackage{stackrel}
\usepackage{caption, float}
\usepackage{hhline}
\usepackage{dirtytalk}

\usepackage{pdfpages}
\usepackage{placeins}

\usepackage{picture,slashbox}
\usepackage{dynkin-diagrams}

\usepackage{epstopdf}
\usepackage{dsfont}
\usepackage[hidelinks]{hyperref}
\usepackage{mathrsfs}
\usepackage{euscript, amssymb}
\theoremstyle{plain}

\newtheorem{Thm}{Theorem}[section]

\newtheorem{Lem}[Thm]{Lemma}

\newtheorem{Cor}[Thm]{Corollary}
\newtheorem{Pro}[Thm]{Proposition}

\theoremstyle{definition}

\newtheorem{Exm}[Thm]{Example}

\theoremstyle{remark}
\newtheorem{Rem}[Thm]{Remark}
\numberwithin{equation}{section}

\usepackage{rotating}

\usepackage{scalerel}
\usepackage{stackengine,wasysym}
\newcommand\tylda[1]{\ThisStyle{%
		\setbox0=\hbox{$\SavedStyle#1$}%
		\stackengine{-.1\LMpt}{$\SavedStyle#1$}{%
			\stretchto{\scaleto{\SavedStyle\mkern.2mu\AC}{.5150\wd0}}{.6\ht0}%
		}{O}{c}{F}{T}{S}%
}}

\newcommand\tyldaa[1]{\ThisStyle{%
		\setbox0=\hbox{$\SavedStyle#1$}%
		\stackengine{-.1\LMpt}{$\SavedStyle#1$}{%
			\stretchto{\scaleto{\SavedStyle\mkern -1.5mu\AC}{.5150\wd0}}{.5\ht0}%
		}{O}{c}{F}{T}{S}%
}}

\newcommand*\quot[2]{{^{\textstyle #1}\big/_{\textstyle #2}}}

\newcommand{\myData}[1][]{
\author[D.\ Burek]{Dominik Burek}
\address{D.\ Burek{}\\{}
Faculty of Mathematics and Computer Science\\{}Jagiellonian University,
\L{}ojasiewicza 6\\{}30-348 Krak\'{o}w\\{}Poland}
\email{dominik.burek@uj.edu.pl}
}

\newcommand{\bossData}[1][]{
\author[S.\ Cynk]{Sławomir Cynk}
\address{S.\ Cynk{}\\{}
Faculty of Mathematics and Computer Science\\{}Jagiellonian University,
\L{}ojasiewicza 6\\{}30-348 Krak\'{o}w\\{}Poland}
\email{slawomir.cynk@uj.edu.pl}
}

\newcommand{\ZZ}{\mathbb{Z}}
\newcommand{\CC}{\mathbb{C}}

\newcommand{\PP}{\mathbb{P}}

\newcommand{\FF}{\mathbb{F}}

\newcommand{\Orb}{\operatorname{Orb}}
\newcommand{\MW}{\operatorname{MW}}

\newcommand{\Fix}{\operatorname{Fix}}

\newcommand{\overbar}[1]{\mkern 1.5mu\overline{\mkern-1.5mu#1\mkern-1.5mu}\mkern 1.5mu}
\newcommand\restrict[1]{\raisebox{-.5ex}{$|$}_{#1}}
\renewcommand{\bar}{\overbar}

\def\restrict#1{\raise-.5ex\hbox{\ensuremath|}_{#1}}

\makeatletter
\DeclareRobustCommand{\iscircle}{\mathord{\mathpalette\is@circle\relax}}
\newcommand\is@circle[2]{%
  \begingroup
  \sbox\z@{\raisebox{\depth}{$\m@th#1\bigcirc$}}%
  \sbox\tw@{$#1\square$}%
  \resizebox{!}{\ht\tw@}{\usebox{\z@}}%
  \endgroup
}
\makeatother

\definecolor{ziel}{rgb}{0.0, 0.5, 0.0}
\usepackage{amssymb}
\usepackage{pifont}
\newcommand{\no}{\textsc{N}}

\newcommand{\fb}{\setlength{\fboxrule}{3pt}}
\newcommand{\szary}{\cellcolor{gray!30}}
\newcommand{\ww}[1]{\fcolorbox{white}{white}{#1}}
\newcommand{\wg}[1]{\fcolorbox{gray!30}{gray!30}{#1}}
\newcommand{\np}{\textsc{A}}
\newcommand{\p}{\textsc{P}}

\usepackage{tikz}

\usepackage{enumitem}
\usepackage{tikz-cd}
\usepackage{tikz}
\usetikzlibrary{calc,intersections}
\usepackage{xcolor}

\tikzset{
  fiber/.style={line cap=round, line join=round},
  fiberline/.style={fiber, line width=0.45pt},
  doubleline/.style={fiber, line width=1.15pt},
  ruling/.style={fiber, line width=0.28pt, draw=black!35},
}

\newcommand{\fibericon}[2][]{%
  \tikz[baseline=-0.45ex,scale=0.55,#1]{#2}%
}

\newcommand{\DoublePone}{%
  \fibericon{ \draw[doubleline] (0,-0.5) -- (0,0.5); }%
}

\newcommand{\Cross}{%
  \fibericon{%
    \draw[fiberline] (-0.45,-0.45) -- (0.45,0.45);
    \draw[fiberline] (-0.45,0.45) -- (0.45,-0.45);
  }%
}

\definecolor{kodblue}{RGB}{0,114,178}      
\definecolor{kodmagenta}{RGB}{204,121,167} 
\definecolor{kodviolet}{RGB}{117,112,179}  
\definecolor{kodorange}{RGB}{230,159,0}    
\definecolor{kodgreen}{RGB}{0,158,115}     
\definecolor{kodcyan}{RGB}{86,180,233}     
\definecolor{kodbrown}{RGB}{166,118,29}    
\definecolor{kodc1}{RGB}{0,114,178}
\definecolor{kodc2}{RGB}{213,94,0}
\definecolor{kodc3}{RGB}{117,112,179}
\definecolor{kodc4}{RGB}{230,159,0}
\definecolor{kodc5}{RGB}{0,158,115}
\definecolor{kodc6}{RGB}{86,180,233}
\definecolor{kodc7}{RGB}{166,118,29}
\definecolor{kodc8}{RGB}{228,26,28}
\definecolor{kodc9}{RGB}{140,140,140}
\definecolor{kodc10}{RGB}{204,121,167}
\definecolor{kodc11}{RGB}{102,166,30}
\definecolor{kodc12}{RGB}{153,102,204}
\definecolor{kodc13}{RGB}{240,228,66}
\definecolor{kodred}{RGB}{220,40,40}
\definecolor{kodii}{RGB}{200,40,40}

\tikzset{
  fiber/.style={line width=1.25pt, line cap=round},
  markA1/.style={circle, draw=kodblue, fill=kodblue!35, line width=0.9pt, minimum size=3.2mm, inner sep=0pt},    
  markA2/.style={circle, draw=kodorange, fill=kodorange!35, line width=0.9pt, minimum size=3.2mm, inner sep=0pt},
  markA3/.style={circle, draw=kodgreen, fill=kodgreen!35, line width=0.9pt, minimum size=3.2mm, inner sep=0pt},  
  markA4/.style={circle, draw=kodmagenta, fill=kodmagenta!35, line width=0.9pt, minimum size=3.2mm, inner sep=0pt}, 
  markB1/.style={circle, draw=kodblue, fill=kodblue!35, line width=0.9pt, minimum size=3.0mm, inner sep=0pt},    
  markB2/.style={circle, draw=kodmagenta, fill=kodmagenta!35, line width=0.9pt, minimum size=3.0mm, inner sep=0pt}, 
  markB3/.style={circle, draw=kodviolet, fill=kodviolet!35, line width=0.9pt, minimum size=3.0mm, inner sep=0pt}, 
  markB4/.style={circle, draw=kodorange, fill=kodorange!35, line width=0.9pt, minimum size=3.0mm, inner sep=0pt}, 
  markB5/.style={circle, draw=kodgreen, fill=kodgreen!35, line width=0.9pt, minimum size=3.0mm, inner sep=0pt}, 
  markB6/.style={circle, draw=kodcyan, fill=kodcyan!35, line width=0.9pt, minimum size=3.0mm, inner sep=0pt},   
  markB7/.style={circle, draw=kodbrown, fill=kodbrown!35, line width=0.9pt, minimum size=3.0mm, inner sep=0pt}, 
  markC1/.style={circle, draw=kodc1, fill=kodc1!35, line width=0.9pt, minimum size=2.9mm, inner sep=0pt},
  markC2/.style={circle, draw=kodc2, fill=kodc2!35, line width=0.9pt, minimum size=2.9mm, inner sep=0pt},
  markC3/.style={circle, draw=kodc3, fill=kodc3!35, line width=0.9pt, minimum size=2.9mm, inner sep=0pt},
  markC4/.style={circle, draw=kodc4, fill=kodc4!35, line width=0.9pt, minimum size=2.9mm, inner sep=0pt},
  markC5/.style={circle, draw=kodc5, fill=kodc5!35, line width=0.9pt, minimum size=2.9mm, inner sep=0pt},
  markC6/.style={circle, draw=kodc6, fill=kodc6!35, line width=0.9pt, minimum size=2.9mm, inner sep=0pt},
  markC7/.style={circle, draw=kodc7, fill=kodc7!35, line width=0.9pt, minimum size=2.9mm, inner sep=0pt},
  markC8/.style={circle, draw=kodc8, fill=kodc8!35, line width=0.9pt, minimum size=2.9mm, inner sep=0pt},
  markC9/.style={circle, draw=kodc9, fill=kodc9!35, line width=0.9pt, minimum size=2.9mm, inner sep=0pt},
  markC10/.style={circle, draw=kodc10, fill=kodc10!35, line width=0.9pt, minimum size=2.9mm, inner sep=0pt},
  markC11/.style={circle, draw=kodc11, fill=kodc11!35, line width=0.9pt, minimum size=2.9mm, inner sep=0pt},
  markC12/.style={circle, draw=kodc12, fill=kodc12!35, line width=0.9pt, minimum size=2.9mm, inner sep=0pt},
  markC13/.style={circle, draw=kodc13, fill=kodc13!35, line width=0.9pt, minimum size=2.9mm, inner sep=0pt},
  markD1/.style={circle, draw=kodii, fill=kodii!35, line width=0.9pt, minimum size=3.2mm, inner sep=0pt}, 
  markD2/.style={circle, draw=kodviolet, fill=kodviolet!35, line width=0.9pt, minimum size=3.2mm, inner sep=0pt}, 
  markD3/.style={circle, draw=kodgreen, fill=kodgreen!35, line width=0.9pt, minimum size=3.2mm, inner sep=0pt}, 
  compmult/.style={font=\small, circle, fill=white, inner sep=0.9pt},
  singularnumber/.style={font=\small\color{kodred}},
  titlelabel/.style={font=\Large\bfseries},
  legendtitle/.style={font=\Large\bfseries},
  legendtext/.style={font=\large}
}

\begin{document}
\raggedbottom
\title[Crepant resolutions of elliptic fiber products]{Crepant resolutions of fiber products and Kummer fibrations of rational elliptic surfaces with non-reduced fibers}
\myData
\bossData

\begin{abstract}
Let \(S_1\) and \(S_2\) be rational elliptic surfaces with section over an
algebraically closed field of characteristic zero, and let
\(X=S_1\times_{\PP^1}S_2\).  We determine the local equations produced by
pairs of Kodaira fibers, including non-reduced fibers, and construct crepant
resolutions for the admissible local models.  For every construction we
record whether it is projective and compute the number of irreducible
components and the Euler characteristic of the resolved central fiber.
Non-existence is asserted only in the normal Gorenstein cases covered by the
terminal factorial obstruction; the remaining entries are stated as open
for the methods of this paper.  Under an explicit global compatibility
hypothesis, the local data give formulas for the Hodge numbers of a smooth
projective resolved fiber product.  We also study Kummer quotients by
fiberwise involutions.  Their fixed curves are analyzed through the
monodromy-orbit description of normalized fiber products of multisections.
The final section records what the same constructions imply after reduction
in characteristics \(5\) and \(7\), without interpreting characteristic-
\(p\) Euler and Picard data as complex Hodge numbers.
\end{abstract}

\subjclass[2020]{Primary 14J32; Secondary 14J27, 14E15, 14J28}
\keywords{rational elliptic surface, fiber product, Kodaira fiber, crepant resolution, Calabi--Yau threefold, Kummer fibration, Hodge numbers}
	\maketitle

\section{Introduction}

Fiber products of rational elliptic surfaces provide one of the most
explicit constructions of Calabi--Yau threefolds.  Let
\[
\phi_i\colon S_i\longrightarrow \PP^1,\qquad i=1,2,
\]
be rational elliptic surfaces with section and consider
\[
X=S_1\times_{\PP^1}S_2 .
\]
The canonical bundle of \(X\) is trivial, but \(X\) is singular whenever the
two elliptic fibrations have singular fibers over a common point of the
base.  Remarkably, the resulting threefold singularities are determined by
a finite amount of data: the corresponding pair of Kodaira fibers.  The
construction therefore reduces much of the global problem to the following
local questions: which pairs of Kodaira fibers admit one of the crepant
resolutions constructed below, when is that resolution projective, and what
does the resolved fiber contribute to the topology of the threefold?

The construction belongs to a broader circle of explicit methods for
producing and studying Calabi--Yau threefolds.  The classification of
extremal rational elliptic surfaces by Miranda--Persson
\cite{MirandaPersson} and of rational elliptic surfaces with four singular
fibers by Herfurtner \cite{Herfurtner} provides a particularly rich supply
of examples.  Fiber products built from these surfaces have played an
important role in the study of rigid and modular Calabi--Yau threefolds and
of correspondences between them; see, for instance,
\cite{Schuett,KapustkaKum}.

The semistable case was treated by Schoen \cite{Schoen}.  If the common
singular fibers are of type \(\mathrm{I}_n\), then the singularities
of \(X\) are ordinary double points.  Small crepant resolutions then produce
Calabi--Yau threefolds, and their Hodge numbers can be expressed in terms of
the Mordell--Weil groups and the configuration of singular fibers.  Kapustka
and Kapustka \cite{KK} extended this construction to several products
involving the reduced additive fibers
\(\mathrm{II}\), \(\mathrm{III}\), and
\(\mathrm{IV}\).  The purpose of this paper is to extend that
analysis to the full Kodaira list while keeping local existence, global
compatibility, and projectivity logically separate.  In particular, we include the
non-reduced fibers
\[
\mathrm{I}_n^*,\qquad
        \mathrm{IV}^*,\qquad
        \mathrm{III}^*,\qquad
        \mathrm{II}^* .
\]

Our main result, Theorem~\ref{thm:main}, classifies the local constructions
established in this paper.  For every pair of singular Kodaira fibers,
Table~\ref{tab:main} records whether we construct a projective crepant
resolution, only an analytic small resolution, or prove a non-existence
statement.  Entries not settled by these arguments are marked accordingly.
In every resolved case the table also gives the two local invariants needed
in the global applications: the number \(N_c\) of irreducible components of
the resolved fiber and its Euler characteristic \(e\).  Globalization is
stated separately in Corollary~\ref{cor:globalization}.  This distinction is
essential because local small resolutions need not glue to a projective
threefold; compare Werner's study of small resolutions \cite{Werner}.

The classification recovers the cases of Schoen and of Kapustka--Kapustka,
but it also reveals that the boundary between admissible and non-admissible
pairs is not governed by a simple reduced versus non-reduced dichotomy.  For
example, a semistable fiber paired with a non-reduced fiber admits a
projective crepant resolution, and
\[
\mathrm{I}_n\times\mathrm{I}_m^*
        \quad\text{contributes}\quad
        N_c=2nm+6n,\qquad e=4nm+12n.
\]
The same is true for
\[
\mathrm{I}_n\times\mathrm{IV}^*,\qquad
        \mathrm{I}_n\times\mathrm{III}^*,\qquad
        \mathrm{I}_n\times\mathrm{II}^*
\]
and for several products of a reduced additive fiber with a non-reduced
one.  By contrast, a product of two non-reduced fibers is non-normal, so the
usual definition of a crepant resolution of a normal
\(\mathbb Q\)-Gorenstein variety does not apply before normalization.
Terminal-factorial obstructions exclude, among others, the pairs
\[
\mathrm{I}_n\times\mathrm{II},\qquad
        \mathrm{II}\times\mathrm{III},\qquad
        \mathrm{II}\times\mathrm{IV}.
\]
The pair $\mathrm{III}\times\mathrm{IV}$ leads to the terminal germ
$x^2+y^3+z^3+t^4$ and remains open for the methods used here.

This local classification has immediate global consequences.  Suppose that
\(\tylda X\to X\) is a crepant resolution, and let \(p\in\PP^1\) be a point
over which both elliptic surfaces have singular fibers.  We introduce the
local defect
\[
\delta_p=\frac{e(\tylda X_p)}{2}-N_c(\tylda X_p).
\]
Combining the classification table with the Shioda--Tate formula and
Schoen's description of the Picard group yields explicit formulas for
\(h^{1,1}(\tylda X)\) and \(h^{1,2}(\tylda X)\).  The global terms record the
Mordell--Weil ranks, the configuration of singular fibers, and the isogeny
relation between the generic fibers; the remaining contribution is the sum
of the local defects \(\delta_p\).  Consequently, the Hodge theory of the
resolved fiber product is reduced to a finite computation from Kodaira
data.

We also study Kummer fibrations obtained from fiberwise involutions, which
were considered for these fiber products in
\cite{KK,KapustkaKum}.  Our approach combines Chen--Ruan orbifold cohomology
\cite{CR} and Batyrev's birational invariance theorem \cite{Batyrev} with the
monodromy of the fixed curves.  These curves are multisections of degree at
most four, and the connected components and genera of their normalized
fiber products can be read from the associated permutation
representations; see \cite{mirandacurves} for the classical correspondence
between covers and monodromy data.  This gives an effective method for
computing the twisted-sector contribution to the Hodge numbers without
constructing a global equation for the quotient or for its crepant
resolution.

Finally, we examine the constructions in positive characteristic.  Reducing
an elliptic surface modulo a prime may cause singular fibers to collide and
may therefore create local fiber products absent from the original
characteristic-zero model.  The local table makes these degenerations
computable.  We record Euler and Picard data for examples in characteristics
\(5\) and \(7\), and keep these invariants separate from complex Hodge
numbers.  Projectivity of a Kummer model is asserted only where a global
sequence of blow-ups is given.
This complements earlier work on Calabi--Yau threefolds obtained from fiber
products of rational quasi-elliptic surfaces
\cite{HirokadoItoSaito} and on non-liftability phenomena produced by small
resolutions after reduction modulo \(p\) \cite{CynkVanStraten}.

We now indicate the main ingredients of the proof.  The analysis is local.
For every pair of Kodaira fibers we write an analytic equation for the fiber
product near its singular points.  The singularities that occur fall into
three broad classes: isolated hypersurface singularities, including ordinary
double points and singularities of Brieskorn type; non-isolated
singularities of transversal ADE type; and special non-isolated
singularities at which components of the singular locus meet or the
transversal type degenerates.

For the analytic non-existence results we apply Lin's criteria for
Brieskorn hypersurface singularities \cite{Lin}.  These
results rule out, among others, singularities analytically equivalent to
\[
x^2+y^2+z^2+t^3,\qquad
        x^2+y^2+z^3+t^4,\qquad
        x^2+y^3+z^3+t^3,
        x^{2}+y^{2}+z^{4}+t^{6}
\]
For the non-isolated models, the decisive distinction is whether the
transversal surface singularity is of ADE type.

In the affirmative cases we construct the resolutions explicitly by
successive blow-ups of components of singular fibers, small resolutions of
ordinary double points, and fiberwise resolutions of transversal ADE
singularities, in the spirit of the classical construction of Brieskorn
resolutions \cite{Artin}.  Computing \(N_c\) and \(e\) requires tracking not
only the exceptional divisors but also their degeneration over special
points of the singular locus.  In particular, conic bundles and cross
bundles occur repeatedly.  Their irreducibility is controlled by a parity
condition visible in local equations of the form
\[
x^2+y^2t^k+\textup{higher order terms}=0 .
\]
This is the main additional ingredient in the component count.

We conclude with an outline of the paper.  Section~2 recalls Schoen's
construction, the extension of
Kapustka--Kapustka, and the obstruction results used later.  Section~3 gives
the case-by-case local analysis and constructs the resolutions in the
admissible cases.  Section~4 assembles these computations into the main
classification table and proves Theorem~\ref{thm:main}.  Section~5 derives
the Hodge number formulas for resolved fiber products.  Section~6 treats
fiber products in positive characteristic.  Sections~7 and~8 study Kummer
fibrations and their Hodge numbers by means of monodromy.  Section~9 gives
applications to rigid Calabi--Yau threefolds and to Kummer fibrations in
positive characteristic.  The final appendix collects the Kodaira fibers
and the local analytic models used throughout the paper.

\section{Schoen's construction and local obstructions}

Unless explicitly stated otherwise, the ground field in the local and
Hodge-theoretic sections is $\CC$.  Let
$\phi_i\colon S_i\to\PP^1$ be relatively minimal rational elliptic surfaces
with section, and set
\[
X=S_1\times_{\PP^1}S_2,
\qquad \pi\colon X\longrightarrow\PP^1.
\]
Write $\Sigma_i\subset\PP^1$ for the set of points supporting singular
fibers of $S_i$.  The Jacobian criterion applied to local equations
$f_i(x_i,y_i)=t$ shows that
\[
\operatorname{Sing}(X)=
\bigcup_{t\in\Sigma_1\cap\Sigma_2}
\operatorname{Sing}((S_1)_t)\times
\operatorname{Sing}((S_2)_t).
\]
In particular, $X$ is smooth if and only if
$\Sigma_1\cap\Sigma_2=\varnothing$.

For later use we also record the canonical bundle calculation.  If
$\mathcal L_i=(R^1\phi_{i*}\mathcal O_{S_i})^{-1}$, then
$\deg\mathcal L_i=\chi(\mathcal O_{S_i})=1$ and the canonical bundle formula
gives $\omega_{S_i}\simeq\phi_i^*(\omega_{\PP^1}\otimes\mathcal L_i)$.
Since $X$ is a local complete intersection,
\[
\omega_X\simeq
p_1^*\omega_{S_1}\otimes p_2^*\omega_{S_2}
\otimes\pi^*\omega_{\PP^1}^{-1}
\simeq\pi^*(\omega_{\PP^1}\otimes\mathcal L_1\otimes\mathcal L_2)
\simeq\mathcal O_X.
\]
Thus every crepant resolution of $X$ has trivial canonical bundle.  We call
such a resolution a Calabi--Yau threefold only when it is smooth and
projective and satisfies $H^1(\mathcal O)=H^2(\mathcal O)=0$.

Schoen treated the semistable case \cite{Schoen}.  Kapustka and Kapustka
extended the construction to reduced additive fibers \cite[Propositions
1.1--1.2]{KK}.  We shall use the following consequence of their result: if
all common singular fibers belong to the pairs
\[
I_n\times I_m,\quad III\times I_n,\quad III\times III,
\quad IV\times I_n,\quad II\times II,
\]
then the local germs admit small crepant resolutions; projectivity requires
the additional global divisor criterion stated in \cite[Proposition~1.2]{KK}.


%

\begin{Lem}[\mbox{\cite[Thm.~2]{Lin}}]
	\label{lem:Lin2}
		Let \[
f(x,y,z,t) = x^{a_{0}}+y^{a_{1}}+z^{a_{2}}+t^{a_{3}}\in\mathbb C[x,y,z,t].
\] Then the hypersurface $X\subset\mathbb A^{4}$ defined by $X=(f=0)$ has a terminal singularity at 0 if and only if
	\[
\frac1{a_{0}} +\frac1{a_{1}}+\frac1{a_{2}}+\frac1{a_{3}}>1+\frac1{\operatorname{lcm}\{a_{0},a_{1}, a_{2}, a_{3}\}}.
\]

\end{Lem}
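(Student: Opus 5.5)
This criterion is Lin's theorem; the plan below recovers it from the toric / weighted-blow-up description of Brieskorn singularities. The singularity $X=(f=0)$ is quasi-homogeneous with weights $w_i=1/a_i$, and $f$ is nondegenerate with respect to its Newton polyhedron. Write $w=(w_0,\dots,w_3)$. The plan is to use Reid's criterion for isolated cDV/canonical hypersurface singularities, or equivalently the Newton-polyhedron computation of discrepancies for nondegenerate hypersurfaces. Concretely, the starting point is the standard formula: for a primitive weight vector $v=(v_0,\dots,v_3)\in\ZZ_{>0}^4$, the exceptional divisor of the corresponding toric weighted blow-up restricted to $X$ has discrepancy
\[
a(v)=\sum_i v_i-\min_{m\in\operatorname{supp} f}\langle v,m\rangle-1 .
\]
For a nondegenerate $f$, $X$ is terminal if and only if $a(v)>0$ for all primitive $v$ in the positive orthant. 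This requires the relevant divisors to meet the strict transform, which holds by nondegeneracy, and is standard via a toric resolution adapted to the Newton polyhedron. For the Brieskorn polynomial, $\min_m\langle v,m\rangle=\min_i a_iv_i$.

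Terminality thus reduces to the arithmetic statement that $\sum_i v_i > \min_i a_iv_i+1$ for all $v\in\ZZ_{>0}^4$. First, set $L=\operatorname{lcm}(a_i)$ and test the vector $v=L\,w=(L/a_0,\dots,L/a_3)$, which is integral. If it is primitive, it gives $a(v)=L\sum w_i-L-1$, and positivity of this is exactly the stated inequality $\sum 1/a_i>1+1/L$. This shows necessity. If $v$ is not primitive, one divides by its gcd $d$; since $d$ divides every $L/a_i$, the lcm normalization forces $d=1$, so no issue arises.

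For sufficiency, assume the inequality and take an arbitrary $v$. Put $s=\min_i a_iv_i$, so $v_i\ge s/a_i$, and by integrality $v_i\ge\lceil s/a_i\rceil$. The aim is to show $\sum_i\lceil s/a_i\rceil\ge s+2$ for every $s\ge1$. This is the main obstacle: ceilings behave periodically in $s$ modulo $L$, so one has to control all residues, not just $s=L$. My approach is to write $s=qL+r$ with $0\le r<L$. The linear part contributes $q(L\sum w_i)\ge q(L+2)$ for the terms with $q\ge1$, using the integer version of the hypothesis $L\sum 1/a_i\ge L+2$. The residue part $\sum\lceil r/a_i\rceil\ge r+1$ for $0<r<L$ then needs a separate check, using $\sum 1/a_i>1$ together with the observation that equality-type failures of $\sum\lceil r/a_i\rceil\ge r+1$ would force $r$ to be divisible by all $a_i$, i.e. $L\mid r$. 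Combining the two parts (taking care of the $r=0$ case via the $q$-part) gives $a(v)\ge1$. I expect to rely on Lin's original case bookkeeping here if the clean estimate fails for small exponents such as $a_i=1$; in that degenerate case, however, $X$ is smooth and the criterion holds trivially.
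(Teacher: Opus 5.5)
The paper gives no argument of its own for this lemma; it only quotes \cite[Thm.~2]{Lin}, so your proposal has to be judged on its own merits. Your necessity half is fine. The vector $v=L\cdot(1/a_0,\dots,1/a_3)$ is primitive: if $d$ divides every $L/a_i$, then $L/d$ is a common multiple of the $a_i$, so $d=1$. Moreover $a(v)=L\sum_i 1/a_i-L-1>0$ is exactly the displayed inequality.

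The gap is in sufficiency, at the step you flagged. Write $s=qL+r$. Your two estimates give $\sum_i\lceil s/a_i\rceil\ge s+2$ only when $q\ge 1$. For $q=0$, i.e.\ $1\le s<L$, they give only $\sum_i\lceil s/a_i\rceil\ge s+1$, which is $a(v)\ge 0$: canonicity, not terminality. To close the argument you would need $\sum_i\lceil r/a_i\rceil\ge r+2$ for $0<r<L$. But the value $r+1$ does occur with $L\nmid r$, so your divisibility observation is false.

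No bookkeeping can rescue this, because the ``if'' direction as quoted is false. Take $(a_0,a_1,a_2,a_3)=(3,3,3,4)$. Then $\sum_i 1/a_i=15/12>13/12=1+1/L$, yet $f$ has multiplicity $3$ at the origin. By Lemma~\ref{lem:crepant-blowup-check} (center of codimension $4$, multiplicity $3$), the blow-up of the origin has exceptional divisor $\{x^3+y^3+z^3=0\}\subset\PP^3$ of discrepancy $0$. In your notation this is $v=(1,1,1,1)$, $s=3$, with $\sum_i\lceil 3/a_i\rceil=4=s+1$ and $12\nmid 3$.

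The same failure occurs in multiplicity two, for example $(2,3,6,7)$ with $v=(3,2,1,1)$. Here $\sum_i v_i=7$ and $\min_i a_iv_i=6$, so $a(v)=0$. This is consistent with $x^2+y^3+z^6+t^7$ not being $cE$, together with Reid's theorem that terminal Gorenstein means isolated cDV.

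What your toric reduction actually proves is the corrected criterion
\[
X \text{ is terminal at } 0 \iff \sum_{i=0}^{3}\Bigl\lceil \frac{s}{a_i}\Bigr\rceil\ge s+2 \quad\text{for all } 1\le s\le L .
\]
The case $s=L$ is the displayed inequality. The range $s\le L$ suffices because $g(s)=\sum_i\lceil s/a_i\rceil-s$ satisfies $g(s+L)=g(s)+g(L)$, and $g(L)=L\sum_i1/a_i-L\ge 2$.

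So do not try to prove the stated ``if'' direction in general. For the three exponent vectors where the paper actually uses it in Corollary~\ref{cor:Lin}, namely $(2,2,2,3)$, $(2,2,3,4)$ and $(2,3,3,3)$, the corrected condition holds. One checks $s\le 6$, respectively $s\le 12$; alternatively, these are isolated cDV points. That application therefore survives.
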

\begin{Lem}[\mbox{\cite[Thm.~B]{Lin}}]\label{lem:LinB}
	Let \[
f(x,y,z,t) = x^{a_{0}}+y^{a_{1}}+z^{a_{2}}+t^{a_{3}}\in\mathbb C[x,y,z,t],
\] $a_{0},a_{1}, a_{2}, a_{3}\in\mathbb Z_{\ge2}$, $A=\mathbb C[x,y,z,t]/(f)$ and $\hat A=\mathbb C[[x,y,z,t]]/(f)$. Then $A$ is UFD if and only if $\hat A$ is UFD.

	Moreover, if \[
\frac1{a_{0}} +\frac1{a_{1}}+\frac1{a_{2}}+\frac1{a_{3}}>1
\] then for $A$ to be a UFD is equivalent to non-existence of integers $n_{0},\dots,n_{3}$ with $1\le n_{i}\le a_{i}-1$ such that

\[
\frac{n_{0}}{a_{0}} + \frac{n_{1}}{a_{1}} + \frac{n_{2}}{a_{2}} + \frac{n_{3}}{a_{3}}=2.
\]
\end{Lem}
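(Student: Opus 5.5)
The statement is Lin's Theorem~B for Brieskorn--Pham hypersurfaces, and the plan follows the classical route via Grothendieck's parafactoriality theorem and the weighted-homogeneous description of the divisor class group.

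\emph{Step 1: UFD for $A$ versus $\hat A$.} Since $f$ is weighted homogeneous with weights $w_i=1/a_i$, the ring $A$ is positively graded, with $A_0=\mathbb C$ and maximal ideal $\mathfrak m=A_+$. For a positively graded normal domain, every height-one prime of the localization $A_{\mathfrak m}$ is the localization of a homogeneous height-one prime, so $\operatorname{Cl}(A)\cong\operatorname{Cl}(A_{\mathfrak m})$. By Mori's theorem, $\operatorname{Cl}(A_{\mathfrak m})\to\operatorname{Cl}(\hat A)$ is injective, which gives the implication $\hat A$ UFD $\Rightarrow$ $A$ UFD.

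For the converse, the point is that the completion of a graded ring at its irrelevant ideal does not acquire new divisor classes when the link is suitably connected. Here $\dim A=3$ and $A$ is a complete intersection, hence Cohen--Macaulay, so depth $\ge 3$. Grothendieck's theorem (SGA2, Exp.~XI) then identifies $\operatorname{Cl}(\hat A)$ with the Picard group of the punctured formal spectrum, and this agrees with $\operatorname{Pic}$ of the punctured spectrum of $A_{\mathfrak m}$. Using the $\mathbb C^*$-action, this reduces to $\operatorname{Pic}(U)$ for $U=\operatorname{Spec}A\setminus\{0\}$, which equals $\operatorname{Cl}(A)$. The \textbf{main obstacle} is comparing the formal and algebraic Picard groups. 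I would argue via Lefschetz-type conditions, using that $H^1(U,\mathcal O_U)=0$ (from depth $\ge3$) and that $H^2(U,\mathcal O_U)$ is graded with only nonzero weights, so graded deformations do not contribute.

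\emph{Step 2: computing the class group when $\sum 1/a_i>1$.} With $U$ as above, $\operatorname{Cl}(A)=\operatorname{Pic}(U)$. Consider the exponential sequence on $U$. By Milnor, $U$ retracts onto the link, which is a $3$-dimensional, $1$-connected manifold. Hence $H^1(U,\mathbb Z)=0$ and $H^2(U,\mathbb Z)$ is torsion-free; moreover $H^2(U,\mathbb Z)=0$ exactly when the link is a rational homology sphere. This gives
\[
0\to H^1(U,\mathcal O)\to\operatorname{Pic}(U)\to H^2(U,\mathbb Z)\to H^2(U,\mathcal O).
\]
The condition $\sum 1/a_i>1$ says that the singularity is rational (log terminal, via Lemma~\ref{lem:Lin2}-type weight estimates). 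Rationality gives $H^2(U,\mathcal O)=H^3_{\mathfrak m}(A)_{\ge0}$-type vanishing, so $\operatorname{Pic}(U)\cong H^2(U,\mathbb Z)=H^2(\text{link})$, and this group is free of rank $\mu'$, the number of weight-$0$ (equivalently weight-$1$) eigenvalues of monodromy.

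\emph{Step 3: identifying the rank.} The Milnor fiber cohomology has basis $x^{n_0-1}y^{n_1-1}z^{n_2-1}t^{n_3-1}$ with $1\le n_i\le a_i-1$, and the monodromy eigenvalue is $\exp(2\pi i\sum n_i/a_i)$. By Steenbrink's description, the part of $H^2$ of the link carrying divisor classes consists of eigenvalue $1$ with Hodge level given by $\sum n_i/a_i=2$; when $\sum 1/a_i>1$, the value $1$ is excluded by the condition on the top degree. Hence $\operatorname{Cl}(A)=0$ if and only if no tuple $(n_0,\dots,n_3)$ with $\sum n_i/a_i=2$ exists, which is the stated criterion.
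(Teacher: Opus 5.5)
The paper gives no argument for this statement; it imports it from \cite{Lin}, so your sketch has to stand on its own. Its architecture is the standard one, and the easy implication ($\hat A$ UFD $\Rightarrow$ $A$ UFD, via $\operatorname{Cl}(A)\cong\operatorname{Cl}(A_{\mathfrak m})\hookrightarrow\operatorname{Cl}(\hat A)$) is fine. However, the step you yourself call the main obstacle is not proved, and the reason you give for it is false in general.

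\emph{The comparison of Picard groups.} The exponential sequence only computes the analytic group $\operatorname{Pic}(U^{\mathrm{an}})$, whereas $\operatorname{Cl}(A)=\operatorname{Pic}(U)$ is algebraic. Comparing the two, and comparing both with $\operatorname{Cl}(\hat A)$, is precisely the first assertion of the lemma, which is claimed for all exponents.

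Your justification is that $H^2(U,\mathcal O_U)\cong H^3_{\mathfrak m}(A)$ has only nonzero weights. Graded local duality gives $H^3_{\mathfrak m}(A)_j\cong (A_{a-j})^{\vee}$ with $a=d(1-\sum_i 1/a_i)$ and $d=\operatorname{lcm}(a_0,\dots,a_3)$, so this fails once $a\ge 0$. Take $x^4+y^4+z^4+t^4$, the cone over the Fermat quartic surface $Y$ with hyperplane class $H$. Here $a=0$ and the weight-zero part is $\mathbb C$. Correspondingly $\operatorname{Cl}(A)\cong\operatorname{Pic}(Y)/\mathbb Z H\cong\mathbb Z^{19}$ is a proper sublattice of $H^2(L;\mathbb Z)\cong\mathbb Z^{21}$, so weight zero genuinely contributes.

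Even when $\sum 1/a_i>1$, the group $H^2(U,\mathcal O_U)$ is nonzero; only its part of nonnegative weight vanishes. So Step~2 as written does not show that $\operatorname{Pic}(U)\to H^2(U;\mathbb Z)$ is onto.

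\emph{What is missing.} The mechanism you need is equivariance. The connecting map $H^2(U^{\mathrm{an}};\mathbb Z)\to H^2(U^{\mathrm{an}},\mathcal O)$ commutes with the $\mathbb C^*$-action, and $\mathbb C^*$ acts trivially on integral cohomology, so the image lies in $H^3_{\mathfrak m}(A)_0$. This identifies the analytic (equivalently, formal) class group with $\ker\bigl(H^2(L;\mathbb Z)\to H^3_{\mathfrak m}(A)_0\bigr)$ for all exponents. When $\sum 1/a_i>1$ that kernel is all of $H^2(L;\mathbb Z)$.

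You then still need an algebraization step: every such analytic line bundle on $U^{\mathrm{an}}$ must come from a graded divisorial ideal of $A$. One way is to linearize it for the $\mathbb C^*$-action, descend to the orbifold $\operatorname{Proj}A$, and apply GAGA. This is Flenner's theorem on class groups of quasi-homogeneous singularities. It is the real content both of Step~1 and of the identification $\operatorname{Cl}(A)\cong H^2(L;\mathbb Z)$ in Step~2.

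\emph{Smaller corrections.}
\begin{itemize}
\item The link is a simply connected $5$-manifold, not a $3$-manifold. For a closed simply connected $3$-manifold, Poincar\'e duality gives $H^2=0$, which would make every such $A$ factorial.
\item $\operatorname{Cl}(A)\cong\operatorname{Cl}(A_{\mathfrak m})$ holds because $\operatorname{Cl}(A)$ is generated by homogeneous height-one primes. It is not true that every height-one prime of $A_{\mathfrak m}$ is homogeneous.
\item In Step~3 you must also exclude $\sum n_i/a_i=3$, using the symmetry $n_i\mapsto a_i-n_i$. After that, the Milnor--Orlik count $\operatorname{rk}H^2(L;\mathbb Z)=\#\{(n_i):\sum n_i/a_i\in\mathbb Z\}$ gives the criterion directly, with no appeal to Steenbrink.
\end{itemize}
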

\begin{Cor}\label{cor:Lin}
	The following types of singularities do not admit analytic crepant resolutions:

	$$
	x^{2}+y^{2}+z^{2}+t^{3},\;\; x^{2}+y^{2}+z^{3}+t^{4},\;\; x^{2}+y^{3}+z^{3}+t^{3}
	$$
\end{Cor}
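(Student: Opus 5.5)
The plan is the standard terminal–factorial obstruction. Suppose $\pi\colon Y\to X$ is an analytic crepant resolution of a germ $(X,0)$. If $X$ has a terminal singularity at $0$, then every $\pi$-exceptional divisor would have positive discrepancy, which contradicts crepancy. Hence $\pi$ has no exceptional divisors and must be small. If, in addition, the local ring $\hat A$ is factorial, then every Weil divisor is Cartier. Take the pullback $\pi^*H$ of a general hyperplane section through $0$ (or any relatively ample divisor on $Y$) and push it forward to $X$. The result is Cartier, so its pullback to $Y$ agrees with the original divisor, because $\pi$ is small. That divisor is then both $\pi$-ample and $\pi$-trivial, which is impossible unless $\pi$ is an isomorphism. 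Since $0$ is singular, this is a contradiction. So it suffices to check, for each of the three germs, two things: that it is terminal, via Lemma~\ref{lem:Lin2}, and that it is a UFD, via Lemma~\ref{lem:LinB}. Lemma~\ref{lem:LinB} gives factoriality of $\hat A$ from that of $A$, which is the analytic statement we need.

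Terminality, by Lemma~\ref{lem:Lin2}:
\begin{itemize}
\item For $(2,2,2,3)$ we get $\frac32+\frac13=\frac{11}6>1+\frac16$.
\item For $(2,2,3,4)$ we get $1+\frac13+\frac14=\frac{19}{12}>1+\frac1{12}$.
\item For $(2,3,3,3)$ we get $\frac12+1=\frac32>1+\frac16$.
\end{itemize}

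Factoriality, by Lemma~\ref{lem:LinB}. In each case the sum of reciprocals exceeds $1$, so factoriality holds exactly when $\sum n_i/a_i=2$ has no solution with $1\le n_i\le a_i-1$.
\begin{itemize}
\item For $(2,2,2,3)$ the sum is $\frac32+\frac{n_3}3$ with $n_3\in\{1,2\}$, which is never $2$.
\item For $(2,2,3,4)$ the sum is $1+\frac{n_2}3+\frac{n_3}4$. We would need $4n_2+3n_3=12$ with $n_2\in\{1,2\}$ and $n_3\in\{1,2,3\}$. This forces $3\mid n_2$, which is impossible.
\item For $(2,3,3,3)$ the sum is $\frac12+\frac{n_1+n_2+n_3}3$. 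We would need $n_1+n_2+n_3=\frac92$, which is not an integer.
\end{itemize}
So all three rings are UFDs.

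The main obstacle is justifying the step ``terminal and factorial implies no crepant resolution'' in the analytic category, rather than only for algebraic $\mathbb Q$-factoriality. I would handle it with the argument above. Work with the analytic germ, where $\hat A$ is factorial. Use a $\pi$-exceptional curve $C$ together with a Cartier divisor $D$ on $Y$ with $D\cdot C\neq 0$, obtained locally by cutting with a general divisor through a point of $C$. Then $\pi_*D$ is principal near $0$, so $D$ is linearly equivalent to $0$ near $C$ because $\pi$ is small, and this contradicts $D\cdot C\ne0$. The arithmetic checks above are routine.
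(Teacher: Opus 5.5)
Your argument is correct in outline, and the arithmetic checks against Lemmas~\ref{lem:Lin2} and~\ref{lem:LinB} are right, but you take a different route from the paper. The paper globalizes. It places each germ as the unique singular point of a projective threefold with trivial canonical bundle, observes that this threefold is terminal and factorial, and then invokes Koll\'ar's theorem \cite[Cor.~4.11]{Kollar} that birational minimal models have the same analytic singularities, which rules out a smooth crepant model $Z$. You stay local: terminality forces a crepant resolution to be small, and analytic factoriality forbids a nontrivial small modification. Your version proves the analytic statement directly. It needs neither the auxiliary global threefold nor the step, tacit in the paper, that a local analytic resolution glues to a \emph{projective} $Z$. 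The price is that you must prove the local lemma on small modifications yourself, whereas the paper delegates it to Koll\'ar. One small point: the ring you need is $\CC\{x,y,z,t\}/(f)$, not $\hat A$. Its factoriality follows from that of $\hat A$ by faithfully flat descent (Mori), not directly from Lemma~\ref{lem:LinB}.

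That local lemma is where your write-up falls short as it stands.
\begin{itemize}
\item Your first variant does nothing: the pullback of a hyperplane section through $0$ is $\pi$-trivial, not $\pi$-ample.
\item The relatively ample variant presupposes that $\pi$ is projective, which is not given for an analytic resolution.
\item In the final variant, ``cutting with a general divisor through a point of $C$'' yields only a divisor germ at one point. It does not define a divisor on a neighbourhood of $C$, and the closure of a local analytic set need not be analytic, so $D\cdot C$ is undefined.
\end{itemize}

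Here is a correct replacement.
\begin{enumerate}
\item Take a small Stein neighbourhood $U$ of $0$ such that $Y=\pi^{-1}(U)$ retracts onto $E=\pi^{-1}(0)$.
\item The fibers have dimension at most one, so $R^2\pi_*\mathcal O_Y=0$ and hence $H^2(Y,\mathcal O_Y)=0$. Therefore $c_1\colon \operatorname{Pic}(Y)\to H^2(Y,\ZZ)\cong H^2(E,\ZZ)$ is surjective. The target is free, with coordinates given by the degrees on the components of $E$, so you can choose $L$ with $\deg L|_C=1$.
\item $\pi_*L$ is a coherent, torsion-free, rank-one sheaf on the Stein space $U$. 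Theorem~A gives a nonzero section $s$, and $D=\operatorname{div}(s)$ satisfies $\mathcal O_Y(D)\cong L$.
\item Your argument now goes through. Factoriality gives $\pi_*D=\operatorname{div}(g)$, and since $\pi$ is small, $D=\operatorname{div}(\pi^*g)$. Hence $\deg L|_C=0$, a contradiction.
\end{enumerate}
Choosing $L$ positive on every component of $E$ also shows that $\pi$ is projective over $U$, which retroactively justifies your relatively ample version.
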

\begin{proof}
	For each of these types of singularities there exists a projective threefold $X$ with a unique singularity of that type and a trivial canonical bundle. By Lemmata \ref{lem:Lin2} and \ref{lem:LinB}, $X$ is terminal and factorial. If $X$ admits a crepant resolution of singularities, there exists a smooth projective manifold $Z$ with trivial canonical divisor, and a birational map
	$Z\longrightarrow X$.
	By \cite[Cor, 4.11]{Kollar} $X$ and $Z$ have the same analytic singularities, which is a contradiction.
\end{proof}
\begin{Cor}\label{cor:2334}
	A singularity analytically isomorphic to
	$$
	x^{2}+y^{3}+z^{3}+t^{4}
	$$
	admits no analytic crepant resolution.
\end{Cor}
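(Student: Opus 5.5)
The plan is to argue differently from Corollary~\ref{cor:Lin}. The global terminal-and-factorial argument does not apply here. For $(a_0,a_1,a_2,a_3)=(2,3,3,4)$ we have
\[
\tfrac12+\tfrac13+\tfrac13+\tfrac14=\tfrac{17}{12}>1+\tfrac1{12},
\]
so the germ is terminal by Lemma~\ref{lem:Lin2}. But $4(n_1+n_2)+3n_3=18$ has the solutions $(n_0,n_1,n_2,n_3)=(1,1,2,2),(1,2,1,2)$, so by Lemma~\ref{lem:LinB} it is \emph{not} factorial. Instead I will use terminality only to force any crepant resolution to be small, and then show directly that no small resolution exists.

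\emph{Step 1.} Let $\tilde X\to X$ be a crepant resolution of a representative of the germ $x^2+y^3+z^3+t^4=0$. Every exceptional divisor would have discrepancy $0$, which contradicts terminality. Hence the exceptional locus contains no divisor, and $\tilde X\to X$ is a small resolution whose exceptional set is a tree of rational curves over the origin.

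\emph{Step 2.} I identify the germ as a compound Du Val point of type $cD_4$. The slice $t=0$ gives $x^2+y^3+z^3$. Since $y^3+z^3$ factors into three distinct linear forms, this is a $D_4$ surface singularity, and a generic hyperplane section through the origin is no better. So $X$ is the total space of the one-parameter deformation
\[
x^2+y^3+z^3+\varepsilon=0,\qquad \varepsilon=t^4,
\]
of the $D_4$ singularity. By the theorem of Pinkham and Katz--Morrison on simultaneous resolution, a small resolution of such a $cD_4$ germ exists if and only if the classifying map $\Delta\to\mathfrak h/W(D_4)$ of this family lifts, after no base change, to a map $\Delta\to\mathfrak h$. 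Partial lifts to intermediate quotients $\mathfrak h/W'$ would be needed only for partial resolutions; since Step~1 forces a full small resolution, a lift to $\mathfrak h$ is required.

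\emph{Step 3.} This is the main obstacle: computing the classifying map. In quasi-homogeneous weights $y,z$ of weight $2$ and $x$ of weight $3$, the Coxeter number of $D_4$ is $6$. The semiuniversal deformation coordinates correspond to the fundamental invariants of $W(D_4)$, of degrees $2,4,4,6$. The only monomial deformation present is the constant term, which is the degree-$6$ invariant $\sigma_6$. Hence the classifying map is $t\mapsto(\sigma_2,\sigma_4,\sigma_4',\sigma_6)=(0,0,0,t^4)$, with the other three coordinates identically $0$. I must check carefully that the constant term really is $\sigma_6$ in a normal form of the versal family, up to a unit and corrections by the lower invariants, and that no coordinate change can hide this.

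\emph{Step 4.} Suppose a lift $h(t)\in\mathfrak h\otimes\CC\{t\}$ existed. Write $h(t)=h_k t^k+\cdots$ with $h_k\ne0$. Then $\sigma_2(h(t))$, $\sigma_4(h(t))$ and $\sigma_4'(h(t))$ vanish identically, so all lower invariants vanish on the leading coefficient $h_k$. The leading term of $\sigma_6(h(t))$ is $\sigma_6(h_k)t^{6k}$. If $\sigma_6(h_k)\neq0$, this forces $6k=4$, which is impossible. If $\sigma_6(h_k)=0$, then all invariants vanish on $h_k$, so $h_k$ lies in the nullcone $\{0\}$ of the finite group $W$, contradicting $h_k\ne0$. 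Thus no lift exists, so there is no small resolution, and by Step~1 no analytic crepant resolution.
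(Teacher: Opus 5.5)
Your Step~2 misstates the Pinkham--Katz--Morrison criterion, and this is a genuine gap. The correspondence is between small \emph{partial} resolutions $\widetilde X\to X$ and lifts of the classifying map $\Delta\to\mathfrak h/W$ to $\mathfrak h/W_J$, for parabolic subgroups $W_J\subset W$. The central fiber $\widetilde X_0$ is then a partial resolution of $X_0$ with du Val points of type $J$. The total space $\widetilde X$ is smooth as soon as the induced one-parameter deformations of these du Val points have smooth total spaces.

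So a smooth $\widetilde X$ does \emph{not} force $\widetilde X_0$ to be the minimal resolution of $X_0$. A lift to $\mathfrak h$ is sufficient for a small resolution but not necessary, and your sentence ``since Step~1 forces a full small resolution, a lift to $\mathfrak h$ is required'' is exactly the false step. Laufer's $cD_4$ germ \cite{Laufer} is the standard counterexample. It has a small resolution with a single exceptional curve (of length two), so its local class group has rank $1$. A lift to $\mathfrak h$ would instead give a small resolution with four exceptional curves.

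Consequently your Step~4, although correct as a computation ($t\mapsto(0,0,0,t^4)$ does not lift to $\mathfrak h$ because $6\nmid 4$), only excludes a simultaneous resolution of the family $x^2+y^3+z^3=-t^4$. It does not exclude a small resolution of the total space. Partial lifts genuinely occur here: the paper's two Weil-divisor blow-ups produce a small partial resolution with two exceptional curves.

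To close the gap you would have to show that no partial lift yields a smooth total space, which amounts to finding a $\mathbb Q$-factorial small model and checking that it is singular. The paper does exactly this:
\begin{itemize}
\item it rewrites the germ as $x(x-y^2)-zt(z+t)$;
\item it blows up the Weil divisor $x=z=0$, obtaining a threefold $D_5$ point;
\item it blows up $p_0=t=0$, reaching the germ $x^2+y^2+z^2+t^3$, which is terminal and factorial by Lemmas~\ref{lem:Lin2} and~\ref{lem:LinB}.
\end{itemize}
Any crepant resolution would be connected to this terminal $\mathbb Q$-factorial model by flops. Flops preserve analytic singularity types \cite[Cor.~4.11]{Kollar}, so the model would have to be smooth, a contradiction.

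Your Step~1 (terminality forces smallness) and your non-factoriality check via Lemma~\ref{lem:LinB} are correct and fit this route; non-factoriality is precisely what makes the Weil-divisor blow-ups possible. But the decisive input is the factorial terminal point left after them, not the classifying map.
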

\begin{proof}
	This singularity is isomorphic to $x(x-y^{2}) - zt(z+t)$. Blowing up the divisor $x=z=0$, we get a $D_{5}$ singularity

	$$
	p_{0}(zp_{0}-y^{2}) - t(z+t)
	$$
	in the affine chart $p_{1}=1$.
	Blowing up the divisor $p_{0}=t=0$, we get in the affine chart $q_{0}=1$ the following singularity

	$$
	zp_{0}-y^{2}-q_{1}z-p_{0}q_{1}^{2} = (p_{0}-q_{1})(z-q_{1}^{2}) -y^{2} - q_{1}^{3},
	$$
	which is isomorphic to the $A_{2}$ singularity $x^{2}+y^{2}+z^{2}+t^{3}$.
	Since the singularity $A_{2}$ is terminal and factorial, it follows again by \cite[Cor, 4,11]{Kollar} that this singularity does not admit a crepant resolution.
\end{proof}
\begin{Cor}
	The following products have a terminal factorial point and hence do not
	admit a projective crepant resolution:

\[
\mathrm{I}_{n}\times \mathrm{II},\;\;
\mathrm{II}\times \mathrm{III},\;\;
\mathrm{II}\times \mathrm{IV}.
\]
\end{Cor}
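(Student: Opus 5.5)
The plan is to write down, for each of the three pairs, the local analytic equation of $X$ at a point of $\operatorname{Sing}((S_1)_t)\times\operatorname{Sing}((S_2)_t)$. Then I would identify it with one of the Brieskorn germs of Corollary~\ref{cor:Lin} and invoke the argument there.

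Near a singular point of a Kodaira fiber, the surface $S_i$ is locally $f_i(x_i,y_i)=t$. The fiber product is therefore locally the hypersurface $f_1(x,y)=f_2(z,w)$ in $\mathbb A^4$. The local models of the fibers are as follows. For $\mathrm I_n$ ($n\ge1$), each node has $f=xy$, which is analytically $x^2+y^2$. For $\mathrm{II}$, the cusp has $f=x^2+y^3$. For $\mathrm{III}$, the tacnode has $f=x^2+y^4$. For $\mathrm{IV}$, the triple point has $f=x^3+y^3$. Moving everything to one side, and absorbing signs by rescaling by roots of $-1$ over $\CC$, I get:
\[
\mathrm I_n\times\mathrm{II}:\ x^2+y^2+z^2+t^3,\qquad
\mathrm{II}\times\mathrm{III}:\ x^2+y^3+z^2+t^4,\qquad
\mathrm{II}\times\mathrm{IV}:\ x^2+y^3+z^3+t^3 .
\]
These are exactly the three germs in Corollary~\ref{cor:Lin}, up to permuting variables. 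The germ for $\mathrm{II}\times\mathrm{III}$ is $x^2+y^2+z^3+t^4$.

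Next I check terminality and factoriality directly via Lemma~\ref{lem:Lin2} and Lemma~\ref{lem:LinB}.
\begin{itemize}
\item For $(2,2,2,3)$: the sum of reciprocals is $11/6>1+1/6$, so the germ is terminal. A relation $\sum n_i/a_i=2$ would need $3/2+n/3=2$, which is impossible.
\item For $(2,2,3,4)$: the sum is $25/12>1+1/12$. A relation would need $1+n_2/3+n_3/4=1$, which is impossible.
\item For $(2,3,3,3)$: the sum is $3/2>1+1/6$. A relation would need $(n_1+n_2+n_3)/3=3/2$, which is impossible since the left side has denominator $3$.
\end{itemize}
So in each case the singular point of $X$ is an isolated terminal factorial hypersurface point. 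Here I use that the analytic, i.e.\ completed, ring is a UFD, which is what Lemma~\ref{lem:LinB} gives.

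Finally, suppose $\tylda X\to X$ were a projective crepant resolution. Since $\omega_X\simeq\mathcal O_X$, the variety $\tylda X$ would be smooth projective with trivial canonical class and birational to $X$. By \cite[Cor.~4.11]{Kollar}, $X$ and $\tylda X$ would have the same analytic singularities, contradicting smoothness of $\tylda X$.

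The one point needing care is the global input. Kollár's result needs $X$ globally terminal and $\mathbb Q$-factorial, but $X$ may carry other singularities over other points of $\Sigma_1\cap\Sigma_2$, and factoriality is a global condition. I would handle this as in the proof of Corollary~\ref{cor:Lin}, in one of two ways. The first option is to note that a crepant resolution of $X$ restricts to an analytic crepant resolution of the germ, which Corollary~\ref{cor:Lin} already excludes. The second option is to partially resolve the remaining singularities crepantly to a terminal $\mathbb Q$-factorial model, which exists by MMP. That model keeps our germ untouched, since the germ is isolated and analytically factorial. Applying \cite[Cor.~4.11]{Kollar} to that model then gives the contradiction. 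I expect this reduction to be the main obstacle; the local computations are routine.
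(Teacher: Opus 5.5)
Your proposal is correct and follows the paper's route: you read off the germs $x^2+y^2+z^2+t^3$, $x^2+y^2+z^3+t^4$, $x^2+y^3+z^3+t^3$ from the Kodaira local models, check terminality and factoriality with Lemmas~\ref{lem:Lin2} and~\ref{lem:LinB}, and conclude by restricting a crepant resolution to the germ and invoking Corollary~\ref{cor:Lin} (your first option), which is the argument the paper intends (cf.\ items (2), (5), (7) in Section~\ref{sec:local}). Fix two slips in the $(2,2,3,4)$ check: the reciprocal sum is $19/12$ (still $>13/12$), and the factoriality relation should read $1+n_2/3+n_3/4=2$, i.e.\ $4n_2+3n_3=12$, which has no solutions with $1\le n_2\le 2$, $1\le n_3\le 3$.
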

\begin{Lem}\label{lem:nonisol}
	Let $(S,0)\subset(\CC^3,0)$ be an isolated normal Gorenstein surface
	singularity and set $X=S\times\CC$.  Then $X$ admits a crepant resolution
	if and only if $S$ is a du Val (ADE) singularity.
\end{Lem}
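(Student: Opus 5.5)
We prove the two implications separately. Throughout, $X=S\times\CC$ is Gorenstein, since it is a hypersurface in $\CC^4$. A resolution $\pi\colon Y\to X$ is crepant exactly when $K_Y=\pi^*K_X$.

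\emph{Sufficiency.} Suppose $S$ is du Val, and let $\mu\colon \tilde S\to S$ be its minimal resolution. This resolution is crepant: $K_{\tilde S}=\mu^*K_S$, since all exceptional curves are $(-2)$-curves. Then
\[
\mu\times\mathrm{id}\colon \tilde S\times\CC\to S\times\CC
\]
is a resolution of $X$. Canonical classes of products pull back factorwise, so $K_{\tilde S\times\CC}=(\mu\times\mathrm{id})^*K_X$. Hence $\mu\times\mathrm{id}$ is a crepant resolution, and it is even projective over $X$.

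\emph{Necessity.} Suppose $\pi\colon Y\to X$ is a crepant resolution; we show $S$ has canonical (equivalently, for Gorenstein surfaces, du Val) singularities. By the standard characterization, a Gorenstein variety admitting a crepant resolution has canonical singularities. Indeed, let $E$ be any divisor over $X$ with centre on a model $Z\to Y$. Its discrepancy over $X$ equals its discrepancy over the smooth variety $Y$, because $K_Y=\pi^*K_X$, and discrepancies over a smooth variety are $\ge 0$. So $X$ is canonical. Canonicity is then inherited by a general hyperplane section through the singular locus; concretely, by the inversion-of-adjunction direction for products, $S\times\CC$ canonical $\iff$ $S$ canonical. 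This can also be done by hand. Any divisor $F$ over $S$, realized on a resolution $\tilde S\to S$, gives the divisor $F\times\CC$ over $X$ on $\tilde S\times \CC$. The two have equal discrepancy: $a(F\times\CC,X)=a(F,S)$, because $K_{\tilde S\times\CC/X}=K_{\tilde S/S}\boxtimes\mathcal O$. So if $S$ were not canonical, some $a(F,S)<0$, and hence $X$ would not be canonical, a contradiction. Finally, a normal Gorenstein surface singularity is canonical if and only if it is a rational double point, i.e.\ du Val.

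The only delicate point is the first step of necessity. The resolution $Y$ is only analytic (not necessarily projective), so we must make sure discrepancies are computed correctly for exceptional divisors not appearing on $Y$. I would handle this by working locally analytically: discrepancy of a divisorial valuation is computed on any resolution dominating both $Y$ and the given model. Such a common resolution exists by Hironaka for the analytic germ, so the argument goes through unchanged. The product-discrepancy comparison is routine.
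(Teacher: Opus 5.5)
Your proposal is correct and takes essentially the same route as the paper: a crepant resolution makes $X$ canonical, the divisor $F\times\CC$ over $S\times\CC$ has the same discrepancy as $F$ over $S$, Gorenstein canonical surface singularities are du Val, and conversely the minimal resolution times $\operatorname{id}_{\CC}$ is crepant. One caution: your aside that canonicity passes to a general hyperplane section through the singular locus is false in general. For example, $x^3+y^3+z^3+t^3=0$ is canonical, but its general hyperplane section through the vertex is a simple elliptic singularity. What actually applies here is Bertini for the free pencil of slices $S\times\{c\}$, not inversion of adjunction. Your explicit discrepancy comparison makes that remark unnecessary anyway.
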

\begin{proof}
	If $X$ has a crepant resolution, then $X$ has canonical singularities.
	Canonicity is unchanged by taking a product with a smooth curve; equivalently,
	the discrepancies of divisorial valuations over $S$ and over
	$S\times\CC$ agree.  Hence $S$ is canonical.  A normal Gorenstein surface
	singularity is canonical if and only if it is du Val; see
	\cite[Section~4.2]{KollarMori}.  Conversely, if
	$\rho\colon\widetilde S\to S$ is the minimal resolution of a du Val
	singularity, then $K_{\widetilde S}=\rho^*K_S$, and
	$\rho\times\operatorname{id}_{\CC}$ is a crepant resolution of
	$S\times\CC$.
\end{proof}
\begin{Cor}\label{cor:nonnormal}
A product of two non-reduced fibers has a two-dimensional singular locus and
is non-normal.  Consequently it does not admit a crepant resolution of singularities.
\end{Cor}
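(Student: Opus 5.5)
The plan is to work locally at a point $p\in\Sigma_1\cap\Sigma_2$ where both $(S_1)_p$ and $(S_2)_p$ are non-reduced. In suitable local coordinates the fibrations are given by $f_1(x_1,y_1)=t$ and $f_2(x_2,y_2)=t$. Near a point of the reduced support of a component of multiplicity $m_1\ge 2$ we can write $f_1=u_1\,g_1^{m_1}$, with $u_1$ a unit and $g_1$ a local equation of that component. Absorbing a root of $u_1$ into $g_1$, we may take $f_1=x_1^{m_1}$ at a general point of the component, and likewise $f_2=x_2^{m_2}$. Near such a pair of general points, $X$ is therefore locally isomorphic to
\[
\{x_1^{m_1}-x_2^{m_2}=0\}\subset\CC^4_{x_1,y_1,x_2,y_2},
\]
and it is singular along the whole surface $\{x_1=x_2=0\}$, which is the product $C_1\times C_2$ of the two non-reduced components. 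This is compatible with the Jacobian description of $\operatorname{Sing}(X)$ above: every point of a multiple component is a singular point of the fiber. Hence the singular locus contains a two-dimensional piece.

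Since $X$ is a hypersurface in the smooth fourfold $S_1\times S_2$, it is Cohen--Macaulay and so satisfies Serre's condition $S_2$. Serre's criterion then says that $X$ is normal exactly when it is also $R_1$, i.e. regular in codimension one. But the singular locus just found has codimension one in the threefold $X$, so $R_1$ fails and $X$ is non-normal. The reducedness of $X$ is not needed for this conclusion, but it does hold, since $x_1^{m_1}-x_2^{m_2}$ is square-free.

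The corollary then follows from the definition of a crepant resolution. A crepant resolution $\rho\colon\widetilde X\to X$ requires $X$ to be normal, since $K_X$ is taken as a Weil divisor class and the condition is $K_{\widetilde X}=\rho^*K_X$. This is how the paper uses the notion in the introduction. It is also the same normality that enters Lemma~\ref{lem:nonisol}. So no crepant resolution in that sense exists.

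If one wanted a more intrinsic statement, one could add the following argument. Any resolution of a non-normal $X$ factors through the normalization $\nu\colon X^\nu\to X$. The conductor $\mathfrak c$ is nonzero along the double surface, and $\omega_{X^\nu}=\nu^*\omega_X(-\mathfrak c)$. Hence $\nu^*K_X$ is never the canonical class of something mapping crepantly, because $K_{X^\nu}=\nu^*K_X-\mathfrak c$ with $\mathfrak c>0$. I expect this to be a short remark rather than an obstacle.

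The only step that needs care is the local normal form. Near a point where a multiple component meets another component, $f_1$ is not a pure power. I avoid this by working only at general points of the components, which is enough to exhibit a two-dimensional singular locus.
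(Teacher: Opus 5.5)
Your argument is correct and is essentially the paper's route. The paper states the corollary without a separate proof and relies on the local model $x^{a}+z^{c}=0$ with $a,c\ge 2$, from the first family of Lemma~\ref{lem:local-model-list}; it observes that this model is singular along a surface, hence non-normal, and notes that crepant resolutions are defined only for normal varieties. Your general-point normal form together with Serre's criterion fills in exactly that step, and the conductor remark is an optional aside not needed for the statement as the paper intends it.
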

\begin{Cor}
	The following products contain a transversal non-ADE surface singularity
	and therefore admit no crepant resolution in a neighborhood of that locus:

\[
\mathrm{II}\times \mathrm{II}^{*},\;\; \mathrm{III}\times \mathrm{III}^{*},\;\; \mathrm{III}\times \mathrm{II}^{*},\;\; \mathrm{IV}\times \mathrm{IV}^{*},\;\; \mathrm{IV}\times \mathrm{III}^{*},\;\; \mathrm{IV}\times \mathrm{II}^{*}.
\]
\end{Cor}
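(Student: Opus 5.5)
The plan is to reduce each listed pair to Lemma~\ref{lem:nonisol}.

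\textbf{Local models.} A fiber of type $\mathrm{II}$, $\mathrm{III}$, $\mathrm{IV}$ has a unique singular point, with local equation $f_1(x_1,y_1)=t$ where
\[
f_1=x_1^2+y_1^3,\qquad x_1^2+y_1^4\ (\text{equivalently } x_1(x_1-y_1^2)),\qquad x_1^3+y_1^3 .
\]
The non-reduced fiber has a general point of a component $\Theta$ of multiplicity $m$, with local equation $t=u^m$ for a unit-adjusted coordinate $u$. The multiplicities are up to $6$ for $\mathrm{II}^*$, up to $4$ for $\mathrm{III}^*$, and up to $3$ for $\mathrm{IV}^*$. Near the point $P\times Q$, with $Q$ a general point of $\Theta$ and $v$ the coordinate along $\Theta$, the fiber product is analytically
\[
\{f_1(x_1,y_1)=u^m\}\times\CC_v .
\]
So $X$ is locally $S\times\CC$ along the curve $P\times\Theta^\circ$, where $S=\{f_1(x,y)=u^m\}\subset\CC^3$.

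\textbf{Identifying the transversal singularity.} The plan is to choose, for each pair, the component with the largest multiplicity $m$. This gives the following Brieskorn–Pham surfaces:
\[
\mathrm{II}\times\mathrm{II}^*:\ x^2+y^3+z^6,\qquad
\mathrm{III}\times\mathrm{III}^*:\ x^2+y^4+z^4,\qquad
\mathrm{III}\times\mathrm{II}^*:\ x^2+y^4+z^6,
\]
\[
\mathrm{IV}\times\mathrm{IV}^*:\ x^3+y^3+z^3,\qquad
\mathrm{IV}\times\mathrm{III}^*:\ x^3+y^3+z^4,\qquad
\mathrm{IV}\times\mathrm{II}^*:\ x^3+y^3+z^6 .
\]
Each is an isolated hypersurface singularity, hence normal and Gorenstein. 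For a Brieskorn–Pham surface $x^a+y^b+z^c$, being du Val is equivalent to $\frac1a+\frac1b+\frac1c>1$. In every case listed this sum is $\le1$: for example $\frac12+\frac13+\frac16=1$ and $\frac13+\frac13+\frac13=1$, while the others are smaller.

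For safety I would also cross-check non-ADE-ness by multiplicity or corank. A hypersurface surface singularity of multiplicity $3$ is not ADE, and a double point whose degree-$2$ part has rank $1$ is ADE only if it is of $D$ or $E$ type with controlled higher terms. For instance, $x^2+y^4+z^4$ is simple elliptic $\tilde E_7$ and $x^2+y^3+z^6$ is $\tilde E_8$. All six are therefore simple elliptic or worse, so none is du Val.

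\textbf{Conclusion.} By Lemma~\ref{lem:nonisol}, $S\times\CC$ admits no crepant resolution. Any crepant resolution of a neighborhood of the locus $P\times\Theta^\circ$ would restrict to one of the germ $S\times\CC$, which is impossible.

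\textbf{Main obstacle.} The hardest step is justifying the local normal form. I need that near a general point of a multiplicity-$m$ component, $\phi_2$ is analytically $t=u^m$, and that the product of the two germs is exactly $\{f_1=u^m\}$ times a disc. This comes from writing $t=g\,u^m$ with $g$ a unit and absorbing an $m$-th root of $g$ into $u$. I also need $f_1$ to be in the stated normal form up to analytic change of coordinates, which follows from the local models in the appendix.
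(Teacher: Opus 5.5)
Your proposal is correct and is essentially the argument the paper intends: the corollary has no separate proof in the paper and is meant as a direct application of Lemma~\ref{lem:nonisol}, where the transversal surface $f_1(x,y)=u^m$ is read off at a general point of a component of maximal multiplicity, exactly as in the third family of Lemma~\ref{lem:local-model-list}. Your list of the six Brieskorn--Pham transversal surfaces, together with the criterion that they fail to be du Val because $\frac1a+\frac1b+\frac1c\le 1$, supplies the details the paper leaves implicit.
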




\newpage
\section{Local crepant resolutions}\label{sec:local}

The equations in the appendix are local defining equations of Kodaira
fibers; they are not equations of their singular loci.  Equating the two
local parameters on the base gives the following four families.

\begin{Lem}\label{lem:local-model-list}
Every germ of $X$ over a common singular fiber is analytically equivalent to
a germ in one of the following families (after permuting variables and
discarding smooth variables):
\begin{itemize}
	\item $x^{a}y^{b}+z^{c}t^{d}=0$, with $a,c\ge1$ and $b,d\ge0$;
	\item $x^{a}+y^{b}+z^{c}+t^{d}=0$, with $a,b,c,d\ge2$;
	\item $x^{a}+y^{b}+z^{c}=0$, with $a,b,c\ge2$;
	\item $x^{a}+y^{b}+z^{c}t^{d}=0$, with $a,b,c\ge2$ and $d\ge1$.
\end{itemize}
The finite list of exponent tuples that actually occurs is obtained by
substitution from the Kodaira table in Appendix~A.
\end{Lem}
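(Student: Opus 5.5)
The plan is to reduce everything to local normal forms of the individual Kodaira fibers at their singular points, and then equate base parameters. Fix $p\in\Sigma_1\cap\Sigma_2$ with local parameter $t$ on $\PP^1$. By the description of $\operatorname{Sing}(X)$ above, the germs of $X$ to be classified are at points $(q_1,q_2)$ with $q_i\in\operatorname{Sing}((S_i)_p)$. Near such a point $\phi_i$ is given by $t=f_i(x_i,y_i)$, so the germ of $X$ is $\{f_1(x_1,y_1)=f_2(x_2,y_2)\}\subset(\CC^4,0)$. The first step is therefore to list, using the appendix, the analytic normal forms of $f_i$ at a singular point of the fiber. Up to analytic coordinate change and multiplication by a unit (which can be absorbed into $t$ when $f_i$ is the local equation of the fiber), these are:
\begin{itemize}
\item $f=x^ay^b$, a normal crossing of components of multiplicities $a,b$, including the reduced node $xy$ of $\mathrm{I}_n$ and the double and triple points of the non-reduced fibers;
\item $f=x^a$, a smooth point of a component of multiplicity $a\ge2$;
\item $f=x^a+y^b$, the cusp $x^2+y^3$ of $\mathrm{II}$;
\item $f=x^ay^b(\ldots)$, the tacnode and triple-point germs of $\mathrm{III}$ and $\mathrm{IV}$.
\end{itemize}
The tacnode $y(y-x^2)$ and the triple point $xy(x+y)$ are quasi-homogeneous. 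After a change of variables they become $x^2+y^4$ and $x^3+y^3$ up to sign, which is the form already used in Corollary~\ref{cor:2334}.

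The second step is to substitute these into $f_1=f_2$, equivalently $f_1-f_2=0$, absorbing signs by rescaling variables with roots of unity. The pairings fall into the four families as follows.
\begin{itemize}
\item Two monomial germs give $x^ay^b-z^ct^d$, which is the first family. This covers $\mathrm{I}_n\times\mathrm{I}_m$, products involving nodes of $\mathrm{I}_n^*$ and similar germs, and products of non-reduced crossings.
\item Two germs of Brieskorn type $x^a+y^b$ give $x^a+y^b+z^c+t^d$, the second family.
\item A Brieskorn germ paired with a monomial of a single variable, $z^c$ with the other variable smooth, gives the third family after discarding that smooth variable.
\item A Brieskorn germ paired with a crossing $z^ct^d$ gives the fourth family.
\item A single-variable monomial $x^a$ paired with $z^ct^d$ stays in the first family with $b=0$.
\end{itemize}

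The main obstacle is justifying the normal forms for the singular points of the reduced additive fibers in a way compatible with the fibration. Concretely, one must check that $f_i$ is right-equivalent to the stated quasi-homogeneous polynomial times a unit $u$, and that $u$ can be removed. Removing it by $t\mapsto ut$ is not allowed, since $t$ is shared by both factors. I would instead extract a root of $u$ and absorb it into the coordinates, using quasi-homogeneity: scale each variable by the appropriate power $u^{w_j}$ of the weights. This works for germs of the form $u\cdot g$ with $g$ quasi-homogeneous, and for the monomial germs $x^ay^b$ it suffices to replace $x$ by $u^{1/a}x$. For $\mathrm{III}$ and $\mathrm{IV}$, finite determinacy of the simple singularities $A_3$ and $D_4$ lets one first bring $f$ into the form $u\cdot g$.

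Finally, I would note that the exponent tuples form a finite list. They are read off from the multiplicities of components in Appendix~A, which are at most $6$, together with the three cusp, tacnode and triple-point types, and this gives the last sentence of the lemma.
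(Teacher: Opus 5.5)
Your proposal is correct and follows the paper's route: read off the local normal forms of $t\circ\phi_i$ from Appendix~A, equate them, and fix signs with roots of $-1$. Your quasi-homogeneous rescaling $x_j\mapsto u^{w_j}x_j$ removes the units without touching the shared parameter $t$, and so justifies a step that the paper's proof passes over silently.

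The only slip is in your case split. The node $zt$ of $\mathrm{I}_n$ paired with the germ of $\mathrm{II}$, $\mathrm{III}$ or $\mathrm{IV}$ gives $x^a+y^b+zt$. This is not in the fourth family, which requires $c\ge2$. You should rewrite $zt$ as $z^2+t^2$ and place the germ in the second family; this is how the paper obtains $x^2+y^2+z^2+t^3$, $x^2+y^2+z^2+t^4$ and $x^2+y^2+z^3+t^3$.
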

\begin{proof}
At a point of a Kodaira fiber, a local parameter of the base is, up to a
unit and an analytic change of coordinates, one of the monomials or sums
listed in Appendix~A.  The fiber product is the hypersurface obtained by
equating the two parameters.  Moving one side to the other and multiplying
one variable by a suitable root of $-1$ yields the four displayed forms.
Conversely, every exponent tuple used below is read from a pair of entries
in that appendix, so the list is exhaustive for Kodaira fibers.
\end{proof}

When both terms in the first family have non-trivial multiplicities, the
fiber product has a two-dimensional singular locus and is non-normal; this is
the situation of Corollary~\ref{cor:nonnormal}.  We henceforth restrict to
normal germs with singular locus of dimension at most one.

The second case is the case of an isolated singularity.

In the third family the germ is $S\times\CC$, where $S$ is an isolated
surface hypersurface singularity.  By Lemma~\ref{lem:nonisol}, it has a
crepant resolution exactly when $S$ is du Val.  We call this a
\emph{transversal ADE singularity}.

In the last case a singular point of a fiber product belongs to more than one irreducible one-dimensional component of the singular locus or belongs to an embedded component. We shall call this singularity \emph{special non-isolated}.

We now treat separately all local models for which a resolution is
constructed.  Every blow-up is a blow-up of a coherent ideal in the
hypersurface; thus it is projective over the germ.  If the ideal exists only
after an analytic factorization, we explicitly label the resulting
resolution analytic.

We use $p_i,q_i,r_i,\ldots$ for the homogeneous coordinates introduced by
successive blow-ups.

\begin{Lem}\label{lem:crepant-blowup-check}
Let $V=(F=0)\subset M$ be a Gorenstein hypersurface in a smooth fourfold,
and let $Z\subset M$ be a smooth center of codimension $r$ along which $F$
has multiplicity $m$.  If $\widetilde V$ is the strict transform under
$\operatorname{Bl}_Z M\to M$, then
\[
K_{\widetilde V}=f^*K_V+(r-1-m)E|_{\widetilde V}.
\]
In particular, the blow-up is crepant when $m=r-1$.  A small resolution of a
Gorenstein threefold is also crepant.
\end{Lem}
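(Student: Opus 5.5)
The plan is the standard adjunction computation for strict transforms under a blow-up of a smooth center. Let $\sigma\colon \widetilde M=\operatorname{Bl}_Z M\to M$ with exceptional divisor $E$, and let $f=\sigma|_{\widetilde V}\colon\widetilde V\to V$. First I would recall the two basic formulas. Since $Z$ is smooth of codimension $r$ in the smooth fourfold $M$, we have $K_{\widetilde M}=\sigma^*K_M+(r-1)E$. Since $F$ vanishes to order exactly $m$ generically along $Z$, the total transform of $V$ decomposes as $\sigma^*V=\widetilde V+mE$, which is the definition of the multiplicity.

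Next I would apply adjunction on both levels. Both $V$ and $\widetilde V$ are Cartier divisors in smooth ambient spaces, so they are Gorenstein, and
\[
K_V=(K_M+V)|_V,\qquad K_{\widetilde V}=(K_{\widetilde M}+\widetilde V)|_{\widetilde V}.
\]
Substituting gives
\[
K_{\widetilde V}=\bigl(\sigma^*K_M+(r-1)E+\sigma^*V-mE\bigr)|_{\widetilde V}=f^*\bigl((K_M+V)|_V\bigr)+(r-1-m)E|_{\widetilde V},
\]
using $\sigma^*(D)|_{\widetilde V}=f^*(D|_V)$ for Cartier $D$. This yields the displayed formula, and the case $m=r-1$ gives crepancy. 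One caveat should be stated: $E|_{\widetilde V}$ is meant as a Cartier divisor, and it may be nonreduced or reducible. The formula still holds as an equality of Cartier divisor classes on the Gorenstein scheme $\widetilde V$.

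For the small resolution claim, let $g\colon Y\to V$ be small, meaning its exceptional locus has codimension $\ge2$ in $Y$. Then $K_Y$ and $g^*K_V$ agree on the open set where $g$ is an isomorphism, whose complement has codimension $\ge2$. Since $Y$ is smooth, hence normal, Weil divisors are determined away from codimension two, so $K_Y=g^*K_V$.

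The only delicate point I expect is in the adjunction step. One must check that $\widetilde V$ does not contain $E$, which holds because $\widetilde V$ is the strict transform. One must also check that the multiplicity $m$ is constant along $Z$, or at least generic along it, so that $\sigma^*V-\widetilde V=mE$ holds with $E$ irreducible when $Z$ is connected. In the applications the centers are chosen with constant multiplicity, and I would add this as an implicit hypothesis in the statement.
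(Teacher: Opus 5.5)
Your proposal is correct and follows the paper's proof. It uses the ambient formula $K_{\widetilde M}=\sigma^*K_M+(r-1)E$, the decomposition of the total transform as $\widetilde V+mE$, and adjunction on both levels, and for the small case it observes that a small modification has no exceptional divisor to carry a discrepancy. Your remark that $m$ should be read as the generic multiplicity along $Z$ (with separate multiplicities on the components of $E$ if $Z$ is disconnected) is a sensible refinement that the paper leaves implicit.
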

\begin{proof}
The canonical divisor of the ambient blow-up is
$f^*K_M+(r-1)E$, while the total transform of $V$ is
$\widetilde V+mE$.  Adjunction gives the formula.  In the small case there
are no exceptional divisors on which a discrepancy could be supported.
\end{proof}

For each construction below, the displayed affine charts cover the blow-up.
The Jacobian criterion is applied in every chart; charts declared smooth
contain a partial derivative that is a unit.  Lemma~\ref{lem:crepant-blowup-check}
then verifies crepancy.  The exceptional locus also gives the change
$\Delta N_c$ in the number of components of the central fiber and the change
$\Delta e$ in its Euler characteristic.

\subsection{Isolated singularities}
Lemma~\ref{lem:local-model-list} and the Kodaira table give the following
nine isolated hypersurface germs.  The labels below distinguish a projective
local blow-up from an analytic small resolution and from a projective
non-existence result.

In general, testing the existence of a projective small resolution is a difficult task (cf. \cite[Satz, p. 95]{Werner}). We verify a stronger condition: whether a singularity of a fiber product can be resolved by a sequence of blow-ups of components of a fiber.

We use $\DoublePone$ for a line or a conic bundle and $\Cross$ for a
cross-bundle in the schematic descriptions of exceptional loci.

\begin{enumerate}[itemsep=2mm plus 2mm]
\item $x^2 + y^2 + z^2 + t^2$ -- an $A_{1}$ singularity, admits a small analytic resolution replacing the singular point by a line. This type of singularity appears on products of semistable fibers $\mathrm{I}_{n}\times \mathrm{I}_{m}$; if $n>1, m>1$, there exists a projective small resolution.

The exceptional locus is a line: $\DoublePone$ and
\[
\Delta N_{c}=0,\;\; \Delta e=1
\]
\item $x^2 + y^2 + z^2 + t^3$ is a terminal factorial singularity.
By Corollary~\ref{cor:Lin}, it admits no analytic crepant resolution.  It
appears on $\mathrm{I}_n\times\mathrm{II}$.

\item $x^2 + y^2 + z^2 + t^4$ -- an $A_{3}$ singularity. This singularity is isomorphic to $xy- z(z-t^{2})$, and the blow-up of the Weil divisor $x=z=0$ is a small resolution replacing the singular point by a line.
This type of singularity appears on the product of singular fibers $\mathrm{I}_{n}\times \mathrm{III}$; if $n>1$, then there exists a projective small resolution.

The exceptional locus is a line: $\DoublePone$ and
\[
\Delta N_{c}=0,\;\; \Delta e=1
\]
\item $x^2 + y^2 + z^3 + t^3$ -- a $D_{4}$ singularity. This singularity is isomorphic to $xy-zt(z+t)$. Blowing up the Weil divisor $x=z=0$, we get a smooth variety in the affine chart $p_{0}=1$ and an $A_{1}$ singularity $xp_{0}-t(z+t)$ in the affine chart $p_{1}=1$. Blowing up the Weil divisor $x=t=0$, we get a small resolution of singularities. This type of singularity appears in the following products of singular fibers: $\mathrm{II}\times \mathrm{II}$ and $\mathrm{I}_{n}\times \mathrm{IV}$; if $n>1$, then the singularity of $\mathrm{I}_{n}\times \mathrm{IV}$ admits a projective small resolution.

The exceptional loci of both blow-ups are lines: $\DoublePone\quad\DoublePone$ and
\[
\Delta N_{c}=0,\;\; \Delta e=2
\]
\item $x^2 + y^2 + z^3 + t^4$ is a terminal factorial $E_6$ singularity.
By Corollary~\ref{cor:Lin}, it admits no analyticcrepant resolution; it
appears on $\mathrm{II}\times\mathrm{III}$.

\item $x^2+y^2+z^4+t^4$ is analytically equivalent to
$x(x+y^2)-z(z+t^2)$.  Blow up the Weil ideal $(x,z)$.  In the two affine
charts the strict transform is
\[
x(1-p_1^2)+y^2-p_1t^2=0,
\qquad
z(p_0^2-1)+p_0y^2-t^2=0.
\]
The Jacobian criterion shows that the only remaining singularities are the
two ordinary double points $x=y=t=0$, $p_1=\pm1$ in the first chart
(equivalently, $z=y=t=0$, $p_0=\pm1$ in the second).
Resolves these two
nodes gives a small crepant resolution.  This germ appears on
$\mathrm{III}\times\mathrm{III}$.

The exceptional loci of all blow-ups are lines: $\DoublePone\quad \DoublePone\quad \DoublePone$ and
\[
\Delta N_{c}=0,\;\; \Delta e=3
\]
\item $x^2 + y^3 + z^3 + t^3$ by Corollary~\ref{cor:Lin} admits no analytic crepant resolution.  It appears on
$\mathrm{II}\times\mathrm{IV}$.

\item $x^2 + y^3 + z^3 + t^4$ by Corollary~\ref{cor:2334} admits no analytic crepant resolution.  It appears on
$\mathrm{III}\times\mathrm{IV}$.

\item $x^3 + y^3 + z^3 + t^3$. The blow-up of the point $(0,0,0,0)$ is a crepant resolution replacing the singular point by a smooth cubic surface; this type of singularity appears on the product $\mathrm{IV}\times \mathrm{IV}$.

The exceptional locus is a (smooth) Fermat cubic surface.
\[
\Delta N_{c}=1,\;\; \Delta e=9-1=8.
\]
\end{enumerate}

\subsection{Transversal ADE singularities}
The local models occurring below have transversal types
\[
A_{1},\ A_{2},\ A_{3},\ A_{4},\ A_{5},\ D_{4},\ E_{6}.
\]
No $E_8$ model is needed in the subsequent table.  A crepant resolution is obtained by
performing the minimal surface resolution fiberwise.  The exceptional loci
of the consecutive blow-ups are fibrations
$E\longrightarrow B$, whose fibers are a line, a smooth conic in
$\mathbb P^{2}$, or a union of two distinct lines.  We call these a
$\mathbb P^{1}$-fibration, a conic fibration, and a cross fibration,
respectively.  The first two are irreducible.  A cross fibration may be
irreducible or a union of two $\mathbb P^{1}$-fibrations, depending on its
degenerate fibers.

\begin{enumerate}[itemsep=2mm plus 2mm]\setcounter{enumi}{9}
\item $x^2 + y^2 + z^{k}$ ($k\in\{2,3,4,5,6\}$), transversal type $A_{k-1}$. Blowing up $\lfloor \frac{k}{2}\rfloor$ times produces a crepant resolution; $\lfloor \frac{k-1}{2}\rfloor$ times the singular curve is replaced by a ``cross''-fibration. If $k$ is even, then the last blow-up replaces the singular curve with a conic bundle. The exceptional loci of the blow-ups are
$\Cross \ldots \ \ \Cross\ \ \DoublePone,$ for $k$ even, and $\Cross \ldots \ \ \Cross$, for $k$ odd. Moreover
\[
\Delta e=k-1
\]

\item $x^2+y^3+z^3$ is a transversal $D_4$ singularity.  Blowing up the
singular curve produces a $\PP^1$-bundle and leaves a curve of transversal
$A_1$ singularities.  The normalization of this remaining curve is a
degree-three unramified cover of the original singular curve away from the
degeneration points.  If the local monodromy is transitive, the cover is
irreducible; if it is trivial, it is the disjoint union of three sections.
The latter behavior occurs for $\mathrm{IV}\times\mathrm{I}_n^*$.

Blowing up the remaining $A_1$ locus completes the resolution.  Its three
geometric branches constitute one conic-bundle surface precisely when the
degree-three cover is connected.  Thus the schematic exceptional locus is
\[
\DoublePone\quad \underbrace{\DoublePone\quad\DoublePone\quad\DoublePone}_{\text{irreducible surface}}
\]
and
\[
\Delta e=4.
\]

\item $x^2 + y^3 + z^4$ -- a transversal $E_{6}$ singularity. The blow-up of the singular locus replaces it with a $\PP^{1}$-bundle and yields a threefold with a transversal $A_{5}$-singularity. Blowing up the singular locus three times resolves this singularity completely: twice we replace the singular locus with a cross-fibration and once with a conic fibration.
The exceptional loci of blow-ups are
$\DoublePone\ \  \Cross\ \  \Cross\ \ \DoublePone$ and
\[
\Delta e=6.
\]
\end{enumerate}

\subsection{Special non-isolated singularities}
We next consider germs $x^{a}+y^{b}+z^{c}t^{d}=0$ for which both
$x^{a}+y^{b}+z^{c}=0$ and $x^{a}+y^{b}+t^{d}=0$ are du Val surface
singularities.  Their singular locus is the union of the lines
$x=y=z=0$ and $x=y=t=0$ when $c,d\ge2$, or a line with a pinch point.
Only the exceptional locus over the central fiber is counted.  Some
sequences introduce an additional vertical singular curve; whenever this
happens, that curve is included among the stated blow-up centers.

\begin{enumerate}[itemsep=2mm plus 2mm]
\setcounter{enumi}{12}
\item
$x^2 + y^2 + z^k t^{l}$, $(k,l)\in \{(2,1), (2,2), (3,2), (4,2), (4,3), (5,4), (6,3), (6,4), (6,5)\}$ -- it is an intersection of transversal $A_{k-1}$ and $A_{l-1}$ lines. If $k\ge2$, then the blow-up of the ideal $x=y=z=0$
replaces the point $(0,0,0,0)$ by a cross, except in the case $k=2, l=0$, when we replace it with a conic. In the affine charts $p_{0}=1$ and $p_{1}=1$ we get a nonsingular variety; in the chart $p_{2}=1$ we get the singularity $p_{0}^{2}+p_{1}^{2}+z^{k-2}t^{l}$. It is an intersection of $A_{k-3}$ and $A_{l-1}$ transversal singularities, so we can continue inductively until $k\le1$ and $l\le1$. Consequently, we finish this procedure with a smooth variety, defined by $x^{2}+y^{2}+1$ or $x^{2}+y^{2}+z$, or with an $A_{1}$ threefold singularity defined by $x^{2}+y^{2}+zt$, which we resolve by an analytic (small) resolution. Altogether, $\lfloor\frac{k+l-1}{2}\rfloor$ times we replace a point by a cross; if $k+l$ is even, then moreover once we replace a point with a line or conic. Finally, if $k$ or $l$ is odd, then the resolution is projective.

Exceptional locus (over the central fiber): $\Cross \dots \ \ \Cross\ $, for $k+l$ odd, and $\Cross \dots \ \ \Cross\ \ \DoublePone\ $, for $k+l$ even. Consequently
\[
\Delta N_{c}=0,\;\; \Delta e=k+l-1.
\]
\item $x^2 + y^3 + z^2 t$ -- it is a transversal $A_{2}$ singularity along the line $x=y=z=0$ with a pinch point at $(0,0,0,0)$. The blow-up of the singular line resolves the singularity and replaces the point $(0,0,0,0)$ with a line.

Exceptional locus (over the central fiber): $\DoublePone\ $,
\[
\Delta N_{c}=0,\;\; \Delta e=1
\]
\item $x^2+y^3+z^2t^2$ has two transversal $A_2$ curves,
$x=y=z=0$ and $x=y=t=0$.  Blowing up the first curve replaces the
central point by a line.  The strict transform is smooth in the charts
$p_0=1$ and $p_1=1$; in the third chart it is
$p_0^2+t^2+p_1^3z=0$.  Blowing up its transversal $A_2$ curve and then
taking a small resolution of the remaining node completes the resolution.
Over the central point the procedure introduces one cross and two lines.

Exceptional locus (over the central fiber): $\DoublePone\quad\Cross\quad\DoublePone\ $.
\[
\Delta N_{c}=0,\;\; \Delta e=4
\]
\item $x^2 + y^3 + z^3 t^2$ -- an intersection of a transversal $A_{2}$ line $x=y=t=0$ and a transversal $D_{4}$ line $x=y=z=0$. The blow-up of the singular line $x=y=t=0$ replaces the point $(0,0,0,0)$ with a line and gives the variety $p_{0}^{2}+tp_{1}^{3}+z^{3}$ in the affine chart $p_{2}=1$ (and a smooth variety in the affine charts $p_{0}=1$ and $p_{1}=1$). This variety has a singular line $p_{0}=p_{1}=z=0$ with a transversal $D_{4}$-singularity and a pinch point at $p_{0}=p_{1}=z=t=0$.

The blow-up of the line $p_{0}=p_{1}=z=0$ replaces the point $p_{0}=p_{1}=z=t=0$ with a line and produces the variety $q_{0}^{2}+tp_{1}+p_{1}q_{2}^{3}$ in the affine chart $q_{1}=1$, $q_{0}^{2}+tq_{1}^{3}z+z$ in the affine chart $q_{2}=1$, and a smooth variety in the affine chart $q_{0}=1$. This variety has a transversal $A_{1}$ singularity along the curve $q_{0}=p_{1}=tq_{1}^{3}+q_{2}^{3}=0$, which is a cyclic triple covering of the line $q_{0}=0$ completely ramified over the point $t=q_{2}=0$. The blow-up of this line resolves the singularity completely and replaces one point in the central fiber by a conic.

Exceptional locus (over the central fiber): $\DoublePone\quad\DoublePone\quad\DoublePone\ $.
\[
\Delta N_{c}=0,\;\; \Delta e=3
\]
\item $x^2 + y^3 + z^4 t^2$ -- an intersection of a transversal $A_{2}$ line $x=y=t=0$ and a transversal $E_{6}$ line $x=y=z=0$. The blow-up of the singular line $x=y=t=0$ replaces the point $(0,0,0,0)$ with a line and gives the variety $p_{0}^{2}+tp_{1}^{3}+z^{4}$ in the affine chart $p_{2}=1$ (and a smooth variety in the affine charts $p_{0}=1$ and $p_{1}=1$). This variety has a singular line $p_{0}=p_{1}=z=0$ with a transversal $E_{6}$-singularity and a pinch point at $p_{0}=p_{1}=z=t=0$.

The blow-up of the line $p_{0}=p_{1}=z=0$ replaces the point $p_{0}=p_{1}=z=t=0$ with a line and produces the variety $q_{0}^{2}+tp_{1}+p_{1}^{2}q_{2}^{4}$ in the affine chart $q_{1}=1$, $q_{0}^{2}+tq_{1}^{3}z+z^{2}$ in the affine chart $q_{2}=1$, and a smooth variety in the affine chart $q_{0}=1$. This variety has a transversal $A_{1}$ singularity along the line $q_{0}=z=t=0$
and transversal $A_{5}$ singularity along the line $q_{0}=q_{1}=z=0$.
Consequently, we can consider only the affine chart $q_{2}=1$.
The line $q_{0}=z=t=0$ projects to the point $x=y=z=t=0$ (vertical).

The blow-up of the vertical line replaces it with a conic bundle with one special fiber equal to a union of two intersecting lines. This blow-up produces the variety $r_{0}^{2} + r_{1}^{2}+r_{1}q_{1}^{3}$ in the affine chart $r_{2}=1$ and a smooth variety in the charts $r_{0}=1$ and $r_{1}=1$. The singular locus of this variety is the line $r_{0}=r_{1}=q_{1}=0$ with a transversal $A_{5}$ singularity.
This singularity is resolved by consecutive blow-ups of the singular line, twice replacing a point with a cross and once with a conic.

Exceptional locus (over the central fiber): $\DoublePone\quad\DoublePone\quad S_{1}\quad \Cross\quad\Cross\quad \DoublePone\ $, where $S_{1}$ denotes the (irreducible) surface having the following fibration

\begin{center}
\begin{tikzpicture}[line cap=round, line join=round, thick, scale=0.6, transform shape]

  \draw (-0.2,-1.9) rectangle (12.2,1.9);

  \def\xL{0.0}
  \def\xR{12.0}

  \def\xr{0.7}
  \def\yr{1.35}

  \def\Xhalf{0.7}
  \def\Yhalf{1.45}

  \pgfmathsetmacro{\Wocc}{4*(2*\xr) + (2*\Xhalf)}
  \pgfmathsetmacro{\g}{(\xR-\xL-\Wocc)/6}

  \pgfmathsetmacro{\cA}{\xL + \g + \xr}
  \pgfmathsetmacro{\cB}{\cA + \xr + \g + \xr}
  \pgfmathsetmacro{\cX}{\cB + \xr + \g + \Xhalf}
  \pgfmathsetmacro{\cC}{\cX + \Xhalf + \g + \xr}
  \pgfmathsetmacro{\cD}{\cC + \xr + \g + \xr}

  \draw (\cA,0) ellipse [x radius=\xr cm, y radius=\yr cm];
  \draw (\cB,0) ellipse [x radius=\xr cm, y radius=\yr cm];

  \draw (\cX-\Xhalf,-\Yhalf) -- (\cX+\Xhalf, \Yhalf);
  \draw (\cX-\Xhalf, \Yhalf) -- (\cX+\Xhalf,-\Yhalf);

  \draw (\cC,0) ellipse [x radius=\xr cm, y radius=\yr cm];
  \draw (\cD,0) ellipse [x radius=\xr cm, y radius=\yr cm];

\end{tikzpicture}
\end{center}
with $e=5.$ Consequently
\[
\Delta N_{c}=1,\;\; \Delta e=1+1+(3+2-2)+2+2+1 = 10.
\]
\item $x^2 + y^3 + z^4 t^3$ -- an intersection of a transversal $D_{4}$ line $x=y=t=0$ and a transversal $E_{6}$ line $x=y=z=0$. The blow-up of the singular line $x=y=t=0$ replaces the point $(0,0,0,0)$ with a line and
gives the variety
$p_{0}^{2}+y+yz^{4}p_{2}^{3}$ in the affine chart $p_{1}=1$,
$p_{0}^{2}+tp_{1}^{3}+z^{4}t$ in the affine chart $p_{2}=1$ (and a smooth variety in the affine chart $p_{0}=1$). Since the singular locus of the variety $p_{0}^{2}+y+yz^{4}p_{2}^{3}$ omits the inverse image of the point $x=y=z=t=0$, we shall restrict only to the affine chart $p_{2}=1$, where it consists of two curves $t= p_0= p_1^3+z^4=0$ and $p_0=p_1=z=0$.

The blow-up of the line $p_{0}=p_{1}=z=0$, which is the strict transform of the line $x=y=z=0$, replaces a point by a line in the central fiber and produces the variety
$q_0^2+t\,p_1+t\,p_1^2q_2^4$ in the affine chart $q_{1}=1$, $q_0^2+t\,zq_1^3+t\,z^2$ in the affine chart $q_{2}=1$ and smooth in the affine chart $q_{0}=1$.
The singular locus of this variety in the affine chart $q_{1}=1$ is
\[
\{t=0,\ q_0=0,\ p_1=0\,\}\ \cup\ \{\,t=0,\ q_0=0,\ 1+p_1q_2^4=0\},
\]
and the singular locus in the affine chart $q_{2}=1$
is
\[
\{q_0=0,\ z=0,\ q_1=0\,\}\ \cup\ \{q_0=0,\ z=0,\ t=0\,\}\ \cup\{\,t=0,\ q_0=0,\ q_1^3+z=0\}.
\]
It is the union of three smooth curves: the strict transforms of the two lines $x=y=z=0$ and $x=y=t=0$, and the line $q_{0}=z=t=0$ contained in the inverse image of the point $x=y=z=t=0$ (the vertical line).
Since these three curves intersect in one point belonging to the affine chart $q_{2}=1$, we shall consider only this affine chart.

The blow-up of the vertical line $z=t=q_{0}=0$ replaces this line with a conic bundle with one singular fiber equal to a line. As a result we get the variety $r_{0}^{2}+r_{2}(q_{1}^{3}+z)$ in the affine chart $r_{1}=1$ and $r_{0}^{2}+r_{1}q_{1}^{3}+tr_{1}^{2}$ in the affine chart $r_{2}=1$.
It has two disjoint singular curves: a transversal $A_{1}$ curve in the affine chart $r_{1}=1$
and a transversal $A_{5}$-line with a pinch point in the affine chart $r_{2}=1$.

The blow-up of the transversal $A_{1}$-curve replaces a point in the central fiber with a conic.

The blow-up of the transversal $A_{5}$-line replaces a point in the central fiber with a line and produces the variety
$s_{0}^{2}+ts_{1}^{2}+q_{1}^{2}s_{1}$ in the affine chart $s_{2}=1$ and smooth in affine charts $s_{0}=1$ and $s_{1}=1$.
It has a transversal $A_{3}$-singularity along the line $s_{0}=s_{1}=q_{1}=0$ with a pinch point.

The blow-up of the transversal $A_{3}$-singularity replaces a point in the central fiber with a line and produces a variety with a transversal $A_{1}$-singularity along a line. Consequently, the blow-up of this line replaces a point in the central fiber with a conic and resolves the singularity completely.

Exceptional locus (over the central fiber): $\DoublePone\quad\DoublePone\quad S_{2}\quad \DoublePone\quad \DoublePone\quad \DoublePone\quad \DoublePone$, where $S_{2}$ denotes the (irreducible) surface having the following fibration
\begin{center}\begin{tikzpicture}[line cap=round, line join=round, thick, scale=0.6, transform shape]

  \draw (-0.2,-1.9) rectangle (12.2,1.9);

  \def\xL{0.0}
  \def\xR{12.0}

  \def\xr{0.7}   
  \def\yr{1.35}  

  \def\Lhalf{1.45} 
  \def\Vhalf{0.10} 

  \pgfmathsetmacro{\Wocc}{4*(2*\xr) + (2*\Vhalf)}
  \pgfmathsetmacro{\g}{(\xR-\xL-\Wocc)/6}

  \pgfmathsetmacro{\cA}{\xL + \g + \xr}
  \pgfmathsetmacro{\cB}{\cA + \xr + \g + \xr}
  \pgfmathsetmacro{\cV}{\cB + \xr + \g + \Vhalf}
  \pgfmathsetmacro{\cC}{\cV + \Vhalf + \g + \xr}
  \pgfmathsetmacro{\cD}{\cC + \xr + \g + \xr}

  \draw (\cA,0) ellipse [x radius=\xr, y radius=\yr];
  \draw (\cB,0) ellipse [x radius=\xr, y radius=\yr];

  \draw[line width=2.2pt] (\cV,-\Lhalf) -- (\cV,\Lhalf);

  \draw (\cC,0) ellipse [x radius=\xr, y radius=\yr];
  \draw (\cD,0) ellipse [x radius=\xr, y radius=\yr];

\end{tikzpicture} \end{center} with $e=4.$ Consequently
\[
\Delta N_{c}=1,\;\; \Delta e=1+1+(2+2-2)+1+1+1+1=8.
\]
\item $x^2 + y^4 + z^2 t$ -- a transversal $A_{3}$ curve with a pinch point. The blow-up of the singular line $x=y=z=0$ replaces the point $(0,0,0,0)$ by
a line and produces a variety with a transversal $A_{1}$ line and a pinch point, given by $p_{0}^{2}+y^{2}+p_{2}^{2}t$ in the affine chart $p_{1}=1$, and smooth in the affine charts $p_{0}=1$ and $p_{2}=1$.

Blowing up the singular curve again replaces a point by a cross in the central fiber and resolves this singularity completely.

Exceptional locus (over the central fiber): $\DoublePone\quad\Cross\ $ and
\[
\Delta N_{c}=0,\;\; \Delta e=3
\]
%
%
%
%
%
%

%
%
%

\item $x^2+y^4+z^2t^2=0$
-- an intersection of two transversal $A_{3}$ curves $x=y=z=0$ and $x=y=t=0$. The blow-up of the singular line $x=y=z=0$ replaces the point $(0,0,0,0)$ by
a line and produces a variety with two transversal $A_{1}$ lines and a transversal $A_{3}$ line, given by $p_{0}^{2}+y^{2}+p_{2}^{2}t^{2}$ in the affine chart $p_{1}=1$, $p_{0}^{2}+z^{2}p_{1}^{4}+t^{2}$ in the affine chart $p_{2}=1$, and smooth in the affine chart $p_{0}=1$.

The double line $p_{0}=x=y=z=t=0$ with transversal $A_{1}$ singularity intersects two other singular lines at different points. This line is contained in the inverse image of the point $x=y=z=t=0$ in the original variety (the vertical line), while the remaining two singular lines project to two double lines. Blowing up the vertical double line replaces it with a conic bundle with two special fibers equal to a cross and produces a variety with two disjoint double lines carrying transversal $A_{1}$ and $A_{3}$ singularities. Blowing up these lines once and twice, respectively, replaces a point twice with a conic and once with a cross, and resolves this singularity.

Exceptional locus (over the central fiber): $\DoublePone\quad S_{3}\quad\Cross\quad\DoublePone\quad \DoublePone\ $, where $S_{3}$ denotes the (irreducible) surface having the following fibration

\begin{center}\begin{tikzpicture}[line cap=round, line join=round, thick, scale=0.6, transform shape]

  \draw (-0.2,-1.9) rectangle (12.2,1.9);

  \def\xL{0.0}
  \def\xR{12.0}

  \def\xr{0.7}    
  \def\yr{1.35}   

  \def\Xhalf{0.7} 
  \def\Yhalf{1.45}

  \pgfmathsetmacro{\Wocc}{5*(2*\xr) + 2*(2*\Xhalf)}
  \pgfmathsetmacro{\g}{(\xR-\xL-\Wocc)/8}

  \pgfmathsetmacro{\cA}{\xL + \g + \xr}                              
  \pgfmathsetmacro{\cB}{\cA + \xr + \g + \xr}                        
  \pgfmathsetmacro{\cXone}{\cB + \xr + \g + \Xhalf}                  
  \pgfmathsetmacro{\cC}{\cXone + \Xhalf + \g + \xr}                  
  \pgfmathsetmacro{\cXtwo}{\cC + \xr + \g + \Xhalf}                  
  \pgfmathsetmacro{\cD}{\cXtwo + \Xhalf + \g + \xr}                  
  \pgfmathsetmacro{\cE}{\cD + \xr + \g + \xr}                        

  \draw (\cA,0) ellipse [x radius=\xr, y radius=\yr];
  \draw (\cB,0) ellipse [x radius=\xr, y radius=\yr];

  \draw (\cXone-\Xhalf,-\Yhalf) -- (\cXone+\Xhalf, \Yhalf);
  \draw (\cXone-\Xhalf, \Yhalf) -- (\cXone+\Xhalf,-\Yhalf);

  \draw (\cC,0) ellipse [x radius=\xr, y radius=\yr];

  \draw (\cXtwo-\Xhalf,-\Yhalf) -- (\cXtwo+\Xhalf, \Yhalf);
  \draw (\cXtwo-\Xhalf, \Yhalf) -- (\cXtwo+\Xhalf,-\Yhalf);

  \draw (\cD,0) ellipse [x radius=\xr, y radius=\yr];
  \draw (\cE,0) ellipse [x radius=\xr, y radius=\yr];

\end{tikzpicture}\end{center} with $e=6$. Consequently
\[
\Delta N_{c}=1,\;\; \Delta e=1+(3+3-2)+2+1+1=9.
\]
%
%
%

%
%
%

%
%
%
%
%

%
%
%


\item $x^2 + y^4 + z^3 t^2$ -- an intersection of a transversal $E_{6}$ line $x=y=z=0$ and a transversal $A_{3}$ line $x=y=t=0$. The blow-up of the singular line $x=y=z=0$ replaces the point $(0,0,0,0)$ by
a line and produces a variety with two transversal $A_{3}$ lines and a transversal $A_{5}$ line. It is given by
$p_{0}^{2}+y^{2}+yp_{2}^{3}t^{2}$
in the affine chart $p_{1}=1$, $p_{0}^{2}+z^{2}p_{1}^{4}+zt^{2}$ in the affine chart $p_{2}=1$ and smooth in the affine chart $p_{0}=1$.

The blow-up of the vertical line $x=y=z=t=p_{0}=0$ replaces the singular line by a cross-fibration with two fibers equal to $\mathbb P^{1}$.
We compute separately the strict transforms of the affine charts $p_{1}=1$ and $p_{2}=1$.

The strict transform of the affine chart $p_{1}=1$ is the variety $q_{0}^{2}+q_{1}^{2}+tq_{1}p_{2}^{3}$ in the affine chart $q_{2}=1$ and smooth in affine charts $q_{0}=1$ and $q_{1}=1$.
Its singular locus is the union of the transversal $A_{5}$-line $q_{0}=q_{1}=p_{2}=0$ that projects to the line $x=y=z=0$ and the transversal $A_{1}$-line $q_{0}=q_{1}=t=0$ that projects to $x=y=z=t=0$ (the vertical line).

The strict transform of the affine chart $p_{2}=1$ is given by
$r_{0}^{2}+p_{1}^{4}+zr_{2}^{2}=0$ in the chart $r_{1}=1$ and by
$r_{0}^{2}+r_{1}^{2}p_{1}^{4}+tr_{1}=0$ in the chart $r_{2}=1$.
Its singular locus is the union of the transversal $A_{3}$ curve
$r_{0}=r_{2}=p_{1}=0$ and the transversal $A_{1}$ curve
$r_{0}=r_{1}=t=0$.

Altogether, the singular locus is isomorphic to intersecting transversal $A_{1}$ and $A_{5}$ lines and a transversal $A_{3}$ line disjoint from them. At the intersection of the transversal $A_{1}$ and $A_{5}$ lines, the variety is locally analytically isomorphic to the singularity $x^{2}+y^{2}+z^{2}t^{6}$.

The blow-up of the vertical line replaces it with a conic bundle with one singular fiber equal to a cross; the singular locus of the strict transform is the union of a transversal $A_{5}$-line and a transversal $A_{3}$-line with a pinch point, given by the local equation $r_{0}^{2}+p_{1}^{4}+zr_{2}^{2}$.

Blowing up the transversal $A_{5}$-line (and its strict transform) three times replaces a point in the central fiber twice with a cross and once with a conic, and yields a variety with only a transversal $A_{3}$-line.

The blow-up of the transversal $A_{3}$-singularity replaces a point in the central fiber with a line and yields a variety locally isomorphic to $s_{0}^{2}+zs_{1}^{2}+p_{1}^{2}$. Its singular locus is the transversal $A_{1}$-line $s_{0}=s_{1}=p_{1}=0$ with a pinch point.

The blow-up of the exceptional line replaces a point in the central fiber with a cross and resolves the singularity completely.

Exceptional locus (over the central fiber): $\DoublePone\quad S_{4}\quad S_{1}\quad\Cross\quad\Cross\quad\DoublePone\quad \DoublePone\quad\Cross\ $, where $S_{4}$ denotes the reducible surface having the following fibration

\begin{center}\begin{tikzpicture}[line cap=round, line join=round, thick, scale=0.6, transform shape]

  \draw (-0.2,-1.9) rectangle (12.2,1.9);

  \def\xL{0.0}
  \def\xR{12.0}

  \def\Xhalf{0.7}
  \def\Yhalf{1.45}

  \def\Lhalf{1.45}

  \def\Vhalf{0.10}

  \def\thickLW{2.2pt}

  \pgfmathsetmacro{\Wocc}{3*(2*\Xhalf) + 2*(2*\Vhalf)}
  \pgfmathsetmacro{\g}{(\xR-\xL-\Wocc)/7}

  \pgfmathsetmacro{\cXone}{\xL + \g + \Xhalf}
  \pgfmathsetmacro{\cLone}{\cXone + \Xhalf + \g + \Vhalf}
  \pgfmathsetmacro{\cXtwo}{\cLone + \Vhalf +  \Xhalf}
  \pgfmathsetmacro{\cXthree}{\cXtwo + \Xhalf + \g + \Xhalf}
  \pgfmathsetmacro{\cLtwo}{\cXthree + \Xhalf + \g + \Vhalf}
  \pgfmathsetmacro{\cXfour}{\cLtwo + \Vhalf + \g + \Xhalf}

  \draw (\cXone-\Xhalf,-\Yhalf) -- (\cXone+\Xhalf, \Yhalf);
  \draw (\cXone-\Xhalf, \Yhalf) -- (\cXone+\Xhalf,-\Yhalf);


  \draw (\cXtwo-\Xhalf,-\Yhalf) -- (\cXtwo+\Xhalf, \Yhalf);
  \draw (\cXtwo-\Xhalf, \Yhalf) -- (\cXtwo+\Xhalf,-\Yhalf);

  \draw (\cXthree-\Xhalf,-\Yhalf) -- (\cXthree+\Xhalf, \Yhalf);
  \draw (\cXthree-\Xhalf, \Yhalf) -- (\cXthree+\Xhalf,-\Yhalf);

  \draw[line width=\thickLW] (\cLtwo,-\Lhalf) -- (\cLtwo,\Lhalf);

  \draw (\cXfour-\Xhalf,-\Yhalf) -- (\cXfour+\Xhalf, \Yhalf);
  \draw (\cXfour-\Xhalf, \Yhalf) -- (\cXfour+\Xhalf,-\Yhalf);

\end{tikzpicture} \end{center} with $e=5.$ Consequently

\[
\Delta N_{c}=3, \;\; \Delta e=1+(2+3-2)+(2+3-2)+2+2+1+1+2=15.
\]
\item $x^3 + y^3 + z^2 t$ -- a transversal $D_{4}$-line with a pinch point. The central point has multiplicity 3, so its blow-up is crepant. This blow-up replaces the point $x=y=z=t=0$ by a cubic surface with a unique $D_{4}$-point.
The strict transform is $p_{0}^{3}+p_{3}^{2}+p_{2}^{2}$ in the affine chart $p_{3}=1$ and smooth in other charts.
Its singular locus is a transversal $D_{4}$-line.

In this case the transversal $D_{4}$-line behaves differently: after the first blow-up we get three disjoint transversal $A_{1}$-lines.
Consequently, the repeated blow-up replaces a point in the central fiber with a line and then three points with conics.

Exceptional locus (over the central fiber): $S_{5}\quad \DoublePone\quad\DoublePone\quad\DoublePone\quad\DoublePone $, where $S_{5}$ is the (irreducible) singular cubic surface $p_{0}^3+p_{1}^3+p_{2}^2p_{3}=0$ with a unique $D_4$ singularity and $e=5$.

Consequently
\[
\Delta N_{c}=1,\;\; \Delta e=(9-4)-1+(2-1)+(2-1)+(2-1)+(2-1)=8.
\]
\item $x^3 + y^3 + z^2 t^2$ -- an intersection of two transversal $D_{4}$ lines $x=y=z=0$ and $x=y=t=0$. The blow-up of the point $x=y=z=t=0$ replaces it with a union of three planes intersecting along a line; the strict transform
is given by $p_{0}^{3}+p_{1}^{3}+zp_{3}^{2}=0$ in the affine chart $p_{2}=1$ and by $p_{0}^{3}+p_{1}^{3}+tp_{2}^{2}=0$ in the affine chart $p_{3}=1$ and smooth in remaining charts.
It has two singularities in the central fiber isomorphic to $x^{3}+y^{3}+z^{2}t$, so we repeat twice the resolution from the previous case.

Exceptional locus (over the central fiber): $S_{6}\quad S_{5}\quad \DoublePone\quad\DoublePone\quad\DoublePone\quad\DoublePone\quad S_{4}\quad \DoublePone\quad\DoublePone\quad\DoublePone\quad\DoublePone$, where $S_{6}$ is (reducible) $x^3+y^3=0$ in $\PP^{3}$ i.e. the intersection of three planes in a line.
Consequently

\[
\Delta N_{c}=5,\;\; \Delta e=(5-1)+2\times 8=20
\]
\end{enumerate}

\medskip
\section{Main theorem}

%
\begin{Thm}\label{thm:main}
	Let $b\in\PP^{1}$, let $F_i=(S_i)_b$ be singular fibers of two
	rational elliptic surfaces with section, and let
	\[
	X=S_1\times_{\PP^1}S_2.
	\]
	The existence of a local crepant resolution along
	$F_1\times F_2$ is described in Table~\ref{tab:main}.

	Here, a local crepant resolution means an analytic or projective
	crepant partial resolution
	\[
	\pi_b\colon X_b'\longrightarrow X
	\]
	such that $X_b'$ is nonsingular over the fiber $F_1\times F_2$.
	No condition is imposed on $X_b'$ away from this fiber.

	The symbols in Table~\ref{tab:main} have the following meanings:
	\begin{itemize}
		\item $\mathrm{P}$: a projective crepant partial resolution exists;
		\item $\mathrm{A}$: an analytic crepant partial resolution exists;
		\item $\mathrm{N}$: no analytic crepant partial resolution exists.
	\end{itemize}
	In the affirmative cases, the table also gives the number $N_c$
	of irreducible components and the topological Euler characteristic
	$e$ of the resolved fiber.
\end{Thm}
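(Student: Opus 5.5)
The proof is an assembly of the local analysis of Section~\ref{sec:local}, organized pair by pair over the Kodaira list.

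\emph{Local models.} For each pair $(F_1,F_2)$ we first list the singular points of $F_1\times F_2$. By the description of $\operatorname{Sing}(X)$ in Section~2 these are the products $\operatorname{Sing}(F_1)\times\operatorname{Sing}(F_2)$, consisting of nodes of reduced components, intersection points of components, and points of non-reduced components. At each such point we read off the local parameter of the base from Appendix~A. Lemma~\ref{lem:local-model-list} then places the germ in one of the four families and fixes its exponent tuple. This gives, for every pair, a finite multiset of germs: isolated ones (items 1--9), curves of transversal type (items 10--12), and special points where singular curves meet or pinch (items 13--23).

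\emph{Entries N.} If some germ is non-normal, Corollary~\ref{cor:nonnormal} applies; this happens exactly for products of two non-reduced fibers. If some germ is terminal factorial, we use Corollaries~\ref{cor:Lin} and~\ref{cor:2334}. If some curve has non-ADE transversal type, Lemma~\ref{lem:nonisol} and the corollary following it apply. One obstruction at a single point suffices, since any crepant resolution of $X$ near the fiber restricts to a crepant resolution of that germ.

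\emph{Entries P and A.} Otherwise every germ occurring is one of items 1, 3, 4, 6, 9--23. We take the union of the blow-up centers described there. These centers are coherent ideals, namely components of the singular locus, Weil divisors given by components of fibers, and points. Performing them in a fixed order (point blow-ups first, then curve blow-ups) gives a sequence of crepant blow-ups by Lemma~\ref{lem:crepant-blowup-check}. This sequence is projective over $X$ whenever every step is the blow-up of an ideal defined on a neighbourhood of the whole fiber. That is the case when the Weil divisors used are global fiber components. Examples are the $\mathrm{I}_n$ components for $n>1$, which separate the two branches at a node, and the conditions ``$k$ or $l$ odd'' in item 13. In the remaining cases only an analytic small resolution of an ordinary double point is available, and the entry is A. We must check that the local resolutions at distinct singular points and along the singular curves are compatible. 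The curves are resolved fiberwise, so the prescription is uniform along them, and the special points are exactly where items 13--23 adjust it.

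\emph{Invariants.} Let $N_c^0$ and $e^0$ be the number of components and the Euler characteristic of $F_1\times F_2$. Each component of the product of two curves is a product of components, so $N_c^0$ is the product of the component counts of $F_1$ and $F_2$, and $e^0=e(F_1)e(F_2)$. We then add the increments $\Delta N_c$ and $\Delta e$ of items 1--23 over all special points. Over the central fiber, a singular curve meets the fiber either in a curve or in points. Where it meets in a curve $C$ (the non-reduced cases), the exceptional ruled surfaces over $C$ add components and Euler characteristic. Each such surface contributes $e(C)$ times the Euler characteristic of its generic fiber, corrected at the special points.

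\emph{Main obstacle.} The component count is the delicate step. A conic or cross fibration over $C$ may be irreducible or may split, and this depends on the monodromy of the two branches. We would decide it from the parity of $k$ in the local form $x^2+y^2t^k$, together with the triple cover in item 11, checking pair by pair against the table. We expect this bookkeeping for the $\mathrm{I}_n^*$, $\mathrm{IV}^*$, $\mathrm{III}^*$, $\mathrm{II}^*$ pairs to be where errors are most likely. As a sanity check we would recover, for $\mathrm{I}_n\times\mathrm{I}_m^*$, the values $N_c=2nm+6n$ and $e=4nm+12n$.
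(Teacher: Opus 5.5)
Your proposal follows essentially the same route as the paper's proof. You read off the local germs from the Kodaira models via Lemma~\ref{lem:local-model-list}. You settle the $\mathrm{N}$ entries by non-normality, the Lin--Koll\'ar terminal obstructions and non-ADE transversal type. You obtain the $\mathrm{P}$/$\mathrm{A}$ entries from the explicit blow-ups of Section~\ref{sec:local}, which are projective when every small step blows up a global fiber component. Finally you compute $N_c$ and $e$ by additivity from $N_c(F_1)N_c(F_2)$ and $e(F_1)e(F_2)$, using the parity of the degeneration order (and the $D_4$ triple cover) to decide when conic and cross bundles split. The one step you leave vaguer than the paper is gluing: the local recipes at different special points of the same singular curve prescribe an order for blowing up intersecting singular curves, and the paper obtains a globally consistent order by noting that the incidence graph of singular curves has no cycles, since dual graphs of Kodaira fibers contain cycles only in the semistable case, where the singularities are isolated.
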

\begin{proof}

\end{proof}
\begin{Cor}[Globalization criterion]\label{cor:globalization}
	Suppose that every common singular fiber of $S_1$ and $S_2$ is of a
	type marked $\mathrm{P}$ in Table~\ref{tab:main}. Then the fiber product
	\[
	X=S_1\times_{\PP^1}S_2
	\]
	admits a smooth projective crepant resolution
	\[
	\widetilde X\longrightarrow X.
	\]
\end{Cor}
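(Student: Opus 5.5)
The plan is to build $\widetilde X$ one common singular fiber at a time, using the P-type constructions of Section~\ref{sec:local} as global blow-ups on $X$ and not as local germs. Three points need to be checked: each local blow-up centre is the germ of a global algebraic ideal; blowing it up changes nothing outside the fiber over the chosen point; and projectivity and crepancy survive composition.

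First I localise the problem. By the description of $\operatorname{Sing}(X)$ in Section~2,
\[
\operatorname{Sing}(X)=\bigcup_{b\in\Sigma_1\cap\Sigma_2}\operatorname{Sing}((S_1)_b)\times\operatorname{Sing}((S_2)_b).
\]
This is a disjoint union of closed sets $Z_b\subset\pi^{-1}(b)$ over the finitely many points $b\in\Sigma_1\cap\Sigma_2$. Away from $\bigcup_b Z_b$ the threefold $X$ is already smooth. So it suffices to find, for each $b$, a projective crepant morphism $X'\to X$ that is an isomorphism outside $\pi^{-1}(b)$ and makes $X'$ nonsingular over $b$.

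Second, fix $b$ with $(F_1,F_2)$ of type P. I would go through the P constructions of Section~\ref{sec:local} and check that each blow-up centre is the restriction of an algebraic coherent ideal on $X$, or on the previous blow-up. There are three kinds of centre.
\begin{enumerate}
\item Closed points, such as the singular point in cases (9), (21) and (22). These are trivially global.
\item Curves of transversal ADE singularities, and the vertical curves introduced later. These lie in $\operatorname{Sing}$, hence in the fiber over $b$, and are algebraic closed subschemes.
\item Weil divisors such as $(x,z)$ in cases (3), (4) and (6), and the small resolutions of nodes in the semistable cases. Here I use that, for the P label, the divisor is exactly $p_1^{-1}(C)$ or $p_1^{-1}(C)\cap p_2^{-1}(D)$ for a component $C$ of $F_1$ (and $D$ of $F_2$). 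For example, the conditions $n>1$ and $m>1$ in cases (1), (3) and (4) exist precisely so that such a fiber-component divisor separates the branches. So these divisors are global surfaces in $X$.
\end{enumerate}

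The main obstacle is the third kind of centre. Blowing up a global Weil divisor $D$ is an isomorphism where $D$ is Cartier, but the surface $p_1^{-1}(C)$ passes through other fibers and possibly through singular points over other $b'$. To control this, I would perform the constructions for the various $b$ sequentially, and at each stage blow up the ideal $\mathcal I_D+\pi^*\mathfrak m_b^N$ for $N\gg0$, or equivalently the ideal of $D$ localised to a neighbourhood of $\pi^{-1}(b)$ and extended by $\mathcal O$ elsewhere. This ideal agrees with $\mathcal I_D$ analytically near $Z_b$, so the local computation is unchanged there. It is the unit ideal away from $\pi^{-1}(b)$, so the blow-up is an isomorphism there. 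Any resulting modification over $\pi^{-1}(b)\setminus Z_b$ is harmless, because there $D$ is Cartier: the fiber over $b$ meets $D$ at smooth points where $D$ is locally principal, except at $Z_b$.

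For the remaining two points I argue as follows. The composite of finitely many blow-ups of coherent ideals is projective over $X$, and $X$ is projective, so $\widetilde X$ is projective. Crepancy holds step by step: Lemma~\ref{lem:crepant-blowup-check} applies to each blow-up with $m=r-1$, as verified chart by chart in Section~\ref{sec:local}, and small steps are crepant. Combined with $\omega_X\simeq\mathcal O_X$ from Section~2, this gives $K_{\widetilde X}=0$. Finally, $\widetilde X$ is nonsingular: over each $b$ this is the local statement of Theorem~\ref{thm:main} for the entry P, and elsewhere nothing was changed and $X$ was smooth.
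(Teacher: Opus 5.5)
Your argument is correct and is essentially the paper's. The $\mathrm{P}$ constructions are compositions of blow-ups of coherent ideals (fiber components, singular curves, points) supported over a single point of $\PP^1$, so they can be carried out fiber by fiber and composed into a projective crepant resolution; the compatibility of blow-up orders at different special points of the same fiber comes from the proof of Theorem~\ref{thm:main} (the singular curves form no cycle), which you use implicitly when you cite that theorem for smoothness over $b$. One simplification: since $C\subset(S_1)_b$, the surface $p_1^{-1}(C)=C\times(S_2)_b$ already lies in $\pi^{-1}(b)$, so your ``main obstacle'' does not arise and $\mathcal I_D+\pi^*\mathfrak m_b^N$ is simply $\mathcal I_D$.
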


\begin{Rem}
For a self-product, an $\mathrm{I}_1\times\mathrm{I}_1$ fiber has an
additional projective small resolution obtained by blowing up the
diagonal.  This is a global feature of the self-product and is not encoded
by the fiber types alone.
\end{Rem}

\begingroup
\begin{landscape}
\thispagestyle{empty}
\vspace*{\fill}
\begin{center}
\renewcommand{\arraystretch}{1.18}
\setlength{\tabcolsep}{2.6pt}
\scriptsize
\resizebox*{!}{0.83\textheight}{
\begin{tabular}{c||c||c||c||c||c||c||c||c} \hypertarget{tabela}
 & \fb \ww{$\mathrm{I}_{m}^{}$} & \fb \ww{$\mathrm{I}_{m}^{*}$} & \fb \ww{$\mathrm{II}$} & \fb \ww{$\mathrm{III}$} & \fb \ww{$\mathrm{IV}$} & \fb \ww{$\mathrm{IV}^{*}$} & \fb \ww{$\mathrm{III}^{*}$} & \fb \ww{$\mathrm{II}^{*}$}\\

\hhline{=::=::=::=::=::=::=::=::=}
\fb $\mathrm{I}_{n}^{}$ & \fb \ww{$\!\begin{aligned}&N_c=nm\\& e=2nm\\& \text{\np; \p\ if $n,m>1$} \end{aligned}$} & \fb \ww{$\!\begin{aligned}&N_{c}=2nm+6n\\& e=4nm+12n \\& \p  \end{aligned}$}  & \fb \no & \fb \ww{$\!\begin{aligned}&N_{c}=2n\\& e=4n \\& \text{\np; \p\ if $n>1$}\end{aligned}$} & \fb \ww{$\!\begin{aligned}&N_{c}=3n\\& e=6n \\& \text{\np; \p\ if $n>1$}\end{aligned}$} & \fb \ww{$\!\begin{aligned}&N_{c}=12n\\& e=24n \\& \p\end{aligned}$} & \fb \ww{$\!\begin{aligned}&N_{c}=18n\\& e=36n \\& \p \end{aligned}$} & \fb \ww{$\!\begin{aligned}&N_{c}=30n\\& e=60n \\& \p\end{aligned}$} \\

\hhline{=::=::=::=::=::=::=::=::=}
\fb $\mathrm{I}_{0}^{*}$ &\fb \szary &\multirow{2}{*}{\fb \no } & \fb \ww{$\!\begin{aligned}&N_{c}=6\\& e=12 \\& \text{\p} \end{aligned}$} & \fb \ww{$\begin{aligned}&N_{c}=12\\& e=24 \\ & \p \end{aligned}$} & \fb \ww{$\!\begin{aligned}&N_{c}=23\\& e=48 \\ & \p\end{aligned}$} & \fb \no & \fb \no & \fb \no  \\

\hhline{=::=::=::=::=::=::=::=::=}
\fb $\mathrm{I}_{n}^{*}(n\geq 1)$ &\fb \szary & \fb \no  & \fb \ww{$\!\begin{aligned}&N_{c}=3n+5\\& e=6n+12 \\& \text{\np} \end{aligned}$} & \fb \ww{$\begin{aligned}&N_{c}=6n+11\\& e=12n+24 \\ & \p \end{aligned}$} & \fb \ww{$\!\begin{aligned}&N_{c}=12n+23\\& e=24n+48 \\ & \p\end{aligned}$} & \fb \no & \fb \no & \fb \no  \\

\hhline{=::=::=::=::=::=::=::=::=}
\fb $\mathrm{II}$  &\fb \szary & \fb \szary & \fb \ww{$\!\begin{aligned}&N_{c}=1\\& e=6 \\& \np \end{aligned}$} & \fb \no  & \fb \no & \fb \fb \ww{$\!\begin{aligned}&N_{c}=12\\& e=24 \\ & \p\end{aligned}$} & \fb \fb \ww{$\!\begin{aligned}&N_{c}=23\\& e=48 \\ & \p\end{aligned}$} & \fb \no   \\

\hhline{=::=::=::=::=::=::=::=::=}
\fb $\mathrm{III}$ & \fb \szary & \fb \szary & \fb \szary & \fb \ww{$\!\begin{aligned}&N_{c}=4\\& e=12 \\ & \np \end{aligned}$} & \fb \no & \fb \ww{$\!\begin{aligned}&N_{c}=35\\& e=72 \\ & \p \end{aligned}$} & \fb \no & \fb \no   \\

\hhline{=::=::=::=::=::=::=::=::=}
\fb $\mathrm{IV}$ & \fb \szary  & \fb \szary & \fb \szary & \fb \szary & \fb \ww{$\!\begin{aligned}&N_{c}=10\\& e=24 \\ & \p \end{aligned}$} & \fb \no & \fb \no & \fb  \no  \\

\hhline{=::=::=::=::=::=::=::=::=}
\fb $\mathrm{IV}^{*}$ & \fb \szary \wg{$\!\begin{aligned}& \\& \end{aligned}$} & \fb \szary & \fb \szary & \fb \szary & \fb \szary & \fb \no & \fb \no & \fb \no  \\

\hhline{=::=::=::=::=::=::=::=::=}
\fb $\mathrm{III}^{*}$ & \fb \szary \wg{$\!\begin{aligned}& \\& \end{aligned}$}& \fb \szary & \fb \szary & \fb \szary & \fb \szary & \fb \szary & \fb \no & \fb \no \\

\hhline{=::=::=::=::=::=::=::=::=}
\fb $\mathrm{II}^{*}$  & \fb \szary \wg{$\!\begin{aligned}& \\& \end{aligned}$}& \fb \szary &\fb \szary & \fb \szary & \fb \szary & \fb \szary & \fb \szary  & \fb \no  \\\end{tabular}
}
\captionsetup{hypcap=false}
\captionof{table}{Local status of the crepant-resolution problem.  Here
$\mathrm{P}$ means that the displayed blow-ups give a projective crepant
resolution, $\mathrm{A}$ means that an analytic crepant resolution
is constructed, and $\mathrm{N}$ means that no crepant resolution
exists.  Gray cells repeat a
case by symmetry.}\label{tab:main}
\end{center}
\vspace*{\fill}
\end{landscape}
\endgroup

\begin{proof}
	In Section~\ref{sec:local}, we determined, for every type of singularity
	occurring in a fiber product of rational elliptic surfaces, whether it
	admits an analytic crepant resolution. We also determined whether the
	resulting partial resolution is projective.

	We considered the local resolution problem in three cases:
	\begin{itemize}
		\item[(1)] isolated singularities;
		\item[(2)] transversal ADE singularities;
		\item[(3)] special non-isolated singularities.
	\end{itemize}
	In the first case, we determined whether all the small modifications
	required in the resolution can be realized by blowing up components of
	the fiber. In the second case, the resolution is unique.

	In the third case, the only ambiguity concerns the order in which the
	non-isolated components of the singular locus are blown up. A
	compatibility obstruction could arise only from a cycle of components.
	However, the dual graph of a singular fiber of an elliptic surface
	contains a cycle only when the fiber is semistable; see
	Appendix~\ref{sec:app}. Such fibers belong to the first case.
	Consequently, no compatibility obstruction arises in the third case,
	and the local resolutions can be glued together.

For the proof of numerical formulas
we first explain the two bookkeeping rules used below.  Every modification
is an isomorphism off the exceptional stratum.  Additivity of the
topological Euler characteristic therefore gives
\[
e(\widetilde X_b)=e(F_1)e(F_2)+\sum_Z
 \bigl(e(E_Z)-e(Z)\bigr),
\]
where the sum runs over the disjoint strata modified at the relevant
stage and $E_Z$ is the exceptional locus over $Z$.  These summands are the
numbers denoted by $\Delta e$ in Section~3.  The same stratification,
together with the incidence relations between strict transforms, counts
the irreducible components and gives the recorded $\Delta N_c$.

There is one monodromy issue in that component count.  A cross bundle over a projective line may
be irreducible or the union of two $\PP^1$-bundles.  Such bundles arise
from transversal $A_1$ singularities with finitely many special points.
Locally analytically the equation has the form
\[
x^{2}+y^{2}t^{k}+\left(\text{terms vanishing to order at least 3 along the line } x=y=z=0 \right) =0, \;\;\textup{where }\; k\geq 1.
\]
The integer $k$ is the order of degeneration.  After normalizing the
exceptional divisor, its two rulings are defined over the double cover
obtained by adjoining a square root of the transverse coefficient.
A small loop around the special point acts on that square root by
$(-1)^k$.  Hence the two rulings are interchanged exactly when $k$ is
odd.  It follows that the cross bundle splits if and only if all local
orders of degeneration are even; otherwise the two local rulings form a
single irreducible surface by monodromy.

For a transversal $D_4$ singularity, the first exceptional divisor and
the exceptional divisor of the residual resolution are two distinct
components.  The normalization of the latter is governed by the analogous
degree-three cover; this is the source of the conic-bundle count used
below.

Cross bundles appear in the resolution of transversal $A_{k}$ and $E_{6}$ lines. Explicitly, we have to consider the following singularities:
\begin{align*}
x^{2}+z^{3}+y^{2}t&=0, &
x^{2}+z^{3}+y^{2}t^{2}&=0, &
x^{2}+z^{3}+y^{2}t^{3}&=0, \\
x^{2}+z^{3}+y^{2}t^{4}&=0, &
x^{2}+z^{3}+y^{4}t^{2}&=0, &
x^{2}+z^{3}+y^{4}t^{3}&=0, \\
x^{2}+z^{4}+y^{2}t&=0, &
x^{2}+z^{4}+y^{2}t^{2}&=0, &
x^{2}+z^{4}+y^{2}t^{3}&=0, \\
x^{2}+z^{4}+y^{3}t^{2}&=0.
\end{align*}
In each displayed equation, completion at the generic point of the
singular line and one step of the resolution reduce the quadratic part to
$uv=t^k$ times a unit.  Units have square roots in the completed local
ring after an unramified extension, so the preceding monodromy test shows
that the order of degeneration is congruent modulo $2$ to the displayed
exponent of $t$.

We now carry out the computation of \(N_c\) and \(e\) for every affirmative
entry of Table~\ref{tab:main}.  The negative entries are proved in
Section~2, while the open entry is Corollary~\ref{cor:2334}.
\medskip

\medskip
\noindent\emph{$\mathrm{I}_n \times \mathrm{I}_{m}$}.
The fiber of type $\mathrm{I}_n \times \mathrm{I}_{m}$ has $nm$ $A_{1}$ singularities. Consequently,
\[
e(\widetilde{X_{b}}) = e(\mathrm{I}_n)e(\mathrm{I}_{m}) + n\times m\times 1=2nm
\]
and
\[
N_{c}(\widetilde{X_{b}}) = N_{c}(\mathrm{I}_n)N_{c}(\mathrm{I}_{m}) = n\times m\times 1=nm.
\]
\medskip
\noindent\emph{$\mathrm{II} \times \mathrm{II}$}.
The fiber of type $\mathrm{II} \times \mathrm{II}$ has a unique $D_{4}$  singularity. Consequently,
\[
e(\widetilde{X_{b}}) = e(\mathrm{II})e(\mathrm{II}) + 2=2\times 2+2=6
\]
and
\[
N_{c}(\widetilde{X_{b}}) = N_{c}(\mathrm{II})N_{c}(\mathrm{II}) = 1.
\]
\medskip
\noindent\emph{$\mathrm{III} \times \mathrm{III}$}.
The fiber of type $\mathrm{III} \times \mathrm{III}$ has a unique singularity with local equation $x^{2}+y^{2}+z^{4}+t^{4}=0$. Consequently,
\[
e(\widetilde{X_{b}}) = e(\mathrm{III})e(\mathrm{III}) + 3=3\times 3+3=12
\]
and
\[
N_{c}(\widetilde{X_{b}}) = N_{c}(\mathrm{III})N_{c}(\mathrm{III}) = 2\times 2=4.
\]
\medskip
\noindent\emph{$\mathrm{IV} \times \mathrm{IV}$}.
The fiber of type $\mathrm{IV} \times \mathrm{IV}$ has a unique
ordinary triple point.
Consequently,
\[
e(\widetilde{X_{b}}) = e(\mathrm{IV})e(\mathrm{IV}) + 8=4\times 4+8=24
\]
and
\[
N_{c}(\widetilde{X_{b}}) = N_{c}(\mathrm{IV})N_{c}(\mathrm{IV}) = 3\times 3 +1 =10.
\]
\medskip
\noindent\emph{$\mathrm{I}_n \times \mathrm{III}$}.
The fiber of type $\mathrm{I}_n \times \mathrm{III}$ has $n$ singular points of type $A_{3}$. Consequently,
\[
e(\widetilde{X_{b}}) = e(\mathrm{I}_{n})e(\mathrm{III}) + n\times 1=n\times 3 + n\times 1=4n
\]
and
\[
N_{c}(\widetilde{X_{b}}) = N_{c}(\mathrm{I}_{n})N_{c}(\mathrm{III}) = n\times 2 + 0 = 2n.
\]
\medskip
\noindent\emph{$\mathrm{I}_n \times \mathrm{IV}$}.
The fiber of type $\mathrm{I}_n \times \mathrm{IV}$ has $n$ singular points of type $D_{4}$. Consequently,
\[
e(\widetilde{X_{b}}) = e(\mathrm{I}_{n})e(\mathrm{IV}) + n\times 2=n\times 4 + 2n=6n
\]
and
\[
N_{c}(\widetilde{X_{b}}) = N_{c}(\mathrm{I}_{n})N_{c}(\mathrm{IV}) + n\times 0  = n\times 3 + 0 = 3n.
\]
\medskip
\noindent\emph{$\mathrm{I}_n \times \mathrm{I}_{m}^{*}$}.
We separately consider the case $m=0$ and $m>0$.

\medskip
\noindent
\emph{Case 1: $m=0$.}

The fiber of type $\mathrm{I}_n \times \mathrm{I}_{0}^{*}$ has the following local singularities
\begin{itemize}
	\item [] $4n$ points with local equation: $x^{2}+y^{2}+z^{2}t$,
	\item [] $n$ transversal $A_{1}$ lines with four  special points with degeneration order 1.
\end{itemize}
Consequently,
\[
e(\widetilde{X_{b}}) = e(\mathrm{I}_n)e(\mathrm{I}_{0}^{*}) + 4n\times2 + n\times((-2) \times 1) = n\times 6 + 6n = 12n
\]
and
\[
N_{c}(\widetilde{X_{b}}) = N_{c}(\mathrm{I}_{n})N_{c}(\mathrm{I}_{0}^{*}) = n\times 5 + n \times 1 =5n + n = 6n.
\]
\medskip
\noindent
\emph{Case 2: $m>0$.}

The fiber of type $\mathrm{I}_n \times \mathrm{I}_{m}^{*}$, $m>0$, has the following local singularities
\begin{itemize}
	\item [] $4n$ points with local equation: $x^{2}+y^{2}+z^{2}t$,
	\item [] $nm$ points with local equation: $x^{2}+y^{2}+z^{2}t^{2}$,
	\item [] $2n$ transversal $A_{1}$ lines with three  special points,
	\item [] $n(m-1)$ transversal $A_{1}$ lines with two special points.
\end{itemize}
Consequently,
\[
e(\widetilde{X_{b}}) = e(\mathrm{I}_n)e(\mathrm{I}_{m}^{*}) + 4n\times2 + nm\times3 +2n \times (-1) + n(m-1)\times 0 = n\times (6+m) + 6n + 3nm = 4 mn + 12n
\]
and
\[
N_{c}(\widetilde{X_{b}}) = N_{c}(\mathrm{I}_{n})N_{c}(\mathrm{I}_{m}^{*}) +  2n +  n \times (m - 1) = n\times (5 + m) + nm + n = 2nm + 6n.
\]
\medskip
\noindent\emph{$\mathrm{I}_n \times \mathrm{IV}^{*}$}.
The fiber of type $\mathrm{I}_n \times \mathrm{IV}^{*}$ has the following local types of singularities
\begin{itemize}
	\item [] $3n$ points with local equation: $x^{2}+y^{2}+z^{2}t$,
	\item [] $3n$ points with local equation: $x^{2}+y^{2}+z^{3}t^{2}$,
	\item [] $3n$ transversal $A_{1}$ lines with two special points,
	\item [] $n$ transversal $A_{2}$ lines with three special points with degeneration order 2.
\end{itemize}
Consequently,
\[
e(\widetilde{X_{b}}) = e(\mathrm{I}_n)e(\mathrm{IV}^{*}) + 3n\times2 +3n \times 4 + 3n\times 0+n\times(-1)\times 2 =
n\times 8 + 16n = 24n
\]
and
\[
N_{c}(\widetilde{X_{b}}) = N_{c}(\mathrm{I}_{n})N_{c}(\mathrm{IV}^{*}) +  3n\times1 +  n \times 2 = n\times 7+5n=12n.
\]
\medskip
\noindent\emph{$\mathrm{I}_n \times \mathrm{III}^{*}$}.
The fiber of type $\mathrm{I}_n \times \mathrm{III}^{*}$ has the following local types of singularities
\begin{itemize}
	\item [] $2n$ points with local equation: $x^{2}+y^{2}+z^{2}t$,
	\item [] $2n$ points with local equation: $x^{2}+y^{2}+z^{3}t^{2}$,
	\item [] $n$ points with local equation: $x^{2}+y^{2}+z^{4}t^{2}$,
	\item [] $2n$ points with local equation: $x^{2}+y^{2}+z^{4}t^{3}$,
	\item [] $n$ transversal $A_{1}$ lines with one special point,
	\item [] $2n$ transversal $A_{1}$ lines with two special points,
	\item [] $2n$ transversal $A_{2}$ lines with two special points with degeneration order 2 and 4,
	\item [] $n$ transversal $A_{3}$ lines with three special points with degeneration order 2, 3 and 3.
\end{itemize}
Consequently,
\[
e(\widetilde{X_{b}}) = e(\mathrm{I}_n)e(\mathrm{III}^{*}) + 2n\times2 +2n \times 4 + n\times 5+2n\times 6 + n\times 1 +n\times 3\times (-1) =
n\times 9 + 27n = 36n
\]
and
\[
N_{c}(\widetilde{X_{b}}) = N_{c}(\mathrm{I}_{n})N_{c}(\mathrm{III}^{*}) +  n\times1 +  2n \times 1 +  2n \times 2 + n\times 3 = n\times 8+10n=18n.
\]
\medskip
\noindent\emph{$\mathrm{I}_n \times \mathrm{II}^*$}.

The fiber of type $\mathrm{I}_n \times \mathrm{II}^{*}$ has the following local types of singularities

\begin{itemize}
	\item [] $n$ points with local equation: $x^{2}+y^{2}+z^{2}t$,
	\item [] $n$ points with local equation: $x^{2}+y^{2}+z^{3}t^{2}$,
	\item [] $n$ points with local equation: $x^{2}+y^{2}+z^{4}t^{2}$,
	\item [] $n$ points with local equation: $x^{2}+y^{2}+z^{4}t^{3}$,
	\item [] $n$ points with local equation: $x^{2}+y^{2}+z^{5}t^{4}$,
	\item [] $n$ points with local equation: $x^{2}+y^{2}+z^{6}t^{3}$,
	\item [] $n$ points with local equation: $x^{2}+y^{2}+z^{6}t^{4}$,
	\item [] $n$ points with local equation: $x^{2}+y^{2}+z^{6}t^{5}$,
	\item [] $n$ transversal $A_{1}$ lines with one special point,
	\item [] $n$ transversal $A_{1}$ lines with two special points,
	\item [] $n$ transversal $A_{2}$ lines with one special point,
	\item [] $n$ transversal $A_{2}$ lines with two special points,
	\item [] $2n$ transversal $A_{3}$ lines with two special points,
	\item [] $n$ transversal $A_{4}$ lines with two special points,
	\item [] $n$ transversal $A_{5}$ lines with three special points.
\end{itemize}
Consequently,
\begin{multline*}
	e(\widetilde{X_{b}}) = e(\mathrm{I}_n)e(\mathrm{II}^{*}) + n\times2 +n\times 4 + n\times 5+n\times 6 + n\times 8 +n\times 8 + n\times 9 + n\times 10 + n\times 1 + n\times 2 - n \times 5 =\\
	=n\times 10 + 50n = 60n
\end{multline*}

and
\[
N_{c}(\widetilde{X_{b}}) = N_{c}(\mathrm{I}_{n})N_{c}(\mathrm{II}^{*}) + n\times1 + n\times1 + n \times 2+ n \times 2 +  2n \times 3 + n\times 4 + n\times 5 = n\times 9 + 21n = 30n.
\]
\medskip
\noindent\emph{$\mathrm{II} \times \mathrm{I}_{n}^{*}$}.
We separately consider the case $n=0$ and $n>0$.

\medskip
\noindent
\emph{Case 1: $n=0$.}

The fiber of type $\mathrm{II} \times \mathrm{I}_{0}^{*}$ has the following local singularities
\begin{itemize}
	\item [] $4$ points with local equation: $x^{2}+y^{3}+z^{2}t$,
	\item [] $1$ transversal $A_{2}$ lines with four  special points with degeneration order 1.
\end{itemize}
Consequently,
\[
e(\widetilde{X_{b}}) = e(\mathrm{II})e(\mathrm{I}_{0}^{*}) + 4\times1 + 1\times(-2)\times 2 = 2\times 6 + 0 = 12
\]
and
\[
N_{c}(\widetilde{X_{b}}) = N_{c}(\mathrm{II})N_{c}(\mathrm{I}_{0}^{*}) +1\times 1  = 1\times 5 + 1  = 6.
\]
\medskip
\noindent
\emph{Case 2: $n>0$.}

The fiber of type $\mathrm{II} \times \mathrm{I}_{n}^{*}$, $n>0$, has the following local singularities
\begin{itemize}
	\item [] $4$ points with local equation: $x^{2}+y^{3}+z^{2}t$,
	\item [] $n$ points with local equation: $x^{2}+y^{3}+z^{2}t^{2}$,
	\item [] $2$ transversal $A_{2}$ lines with three  special points with degeneration order 1,1 and 2,
	\item [] $(n-1)$ transversal $A_{2}$ lines with two special points with degeneration order 2 and 2.
\end{itemize}
Consequently,
\[
e(\widetilde{X_{b}}) = e(\mathrm{II})e(\mathrm{I}_{n}^{*}) + 4 + n\times4 +2 \times 2\times (-1) + (n-1)\times 0 = 2\times (6+n) + 4n =6n + 12
\]
and
\[
N_{c}(\widetilde{X_{b}}) = N_{c}(\mathrm{II})N_{c}(\mathrm{I}_{n}^{*}) +  2\times 1 + (n-1) \times 2 = 1\times (5 + n) + 2n = 3n+5.
\]
\medskip
\noindent\emph{$\mathrm{II} \times \mathrm{IV}^{*}$}.
The fiber of type $\mathrm{II} \times \mathrm{IV}^{*}$ has the following local types of singularities
\begin{itemize}
	\item [] $3$ points with local equation: $x^{2}+y^{3}+z^{2}t$,
	\item [] $3$ points with local equation: $x^{2}+y^{3}+z^{3}t^{2}$,
	\item [] $3$ transversal $A_{2}$ lines with two special points,
	\item [] $1$ transversal $D_{4}$ lines with three special points with degeneration order 2.
\end{itemize}
Consequently,
\[
e(\widetilde{X_{b}}) = e(\mathrm{II})e(\mathrm{IV}^{*}) + 3\times1 + 3 \times 3 + 3\times 0 + 1\times 4 \times (-1) =
2\times 8 + 8 = 24
\]
and
\[
N_{c}(\widetilde{X_{b}}) = N_{c}(\mathrm{II})N_{c}(\mathrm{IV}^{*}) +  3\times 1 +  1 \times 2 = 1\times 7 + 5 = 12.
\]
\medskip
\noindent\emph{$\mathrm{II} \times \mathrm{III}^*$}.
The fiber of type $\mathrm{II} \times \mathrm{III}^{*}$ has the following local types of singularities
\begin{itemize}
	\item [] $2$ points with local equation: $x^{2}+y^{3}+z^{2}t$,
	\item [] $2$ points with local equation: $x^{2}+y^{3}+z^{3}t^{2}$,
	\item [] $1$ points with local equation: $x^{2}+y^{3}+z^{4}t^{2}$,
	\item [] $2$ points with local equation: $x^{2}+y^{3}+z^{4}t^{3}$,
	\item [] $1$ transversal $A_{2}$ lines with one special point with degeneration order 4,
	\item [] $2$ transversal $A_{2}$ lines with two special points with degeneration order 1, 3,
	\item [] $2$ transversal $D_{4}$ lines with two special points with degeneration order 2 and 4,
	\item [] $1$ transversal $E_{6}$ lines with three special points with degeneration order 2, 3 and 3.
\end{itemize}
Consequently,
\[
e(\widetilde{X_{b}}) = e(\mathrm{II})e(\mathrm{III}^{*}) + 2\times1 +2 \times 3 + 1\times 10 + 2\times 8 + 2 + 1\times (-1) \times 6  =
2\times 9 + 30 = 48
\]
and
\[
N_{c}(\widetilde{X_{b}}) = N_{c}(\mathrm{II})N_{c}(\mathrm{III}^{*}) +  1\times1 +  2 \times 1 +  1 \times 2 + 2\times 1 + 2 \times 2 + 1 \times 4 = 1\times 8 + 15 = 23.
\]
\medskip
\noindent\emph{$\mathrm{III} \times \mathrm{I}_{n}^{*}$}.
We separately consider the case $n=0$ and $n>0$.

\medskip
\noindent
\emph{Case 1: $n=0$.}

The fiber of type $\mathrm{III} \times \mathrm{I}_{0}^{*}$ has the following local singularities
\begin{itemize}
	\item [] $4$ points with local equation: $x^{2}+y^{4}+z^{2}t$,
	\item [] $1$ transversal $A_{3}$ lines with four  special points with degeneration order 1.
\end{itemize}
Consequently,
\[
e(\widetilde{X_{b}}) = e(\mathrm{III})e(\mathrm{I}_{0}^{*}) + 4\times3 + 1\times(-2)\times 3 = 3\times 6 + 6 = 24
\]
and
\[
N_{c}(\widetilde{X_{b}}) = N_{c}(\mathrm{III})N_{c}(\mathrm{I}_{0}^{*}) +1\times 2  = 2\times 5 + 2  = 12.
\]
\medskip
\noindent
\emph{Case 2: $n>0$.}

The fiber of type $\mathrm{III} \times \mathrm{I}_{n}^{*}$, $n>0$, has the following local singularities
\begin{itemize}
	\item [] $4$ points with local equation: $x^{2}+y^{4}+z^{2}t$,
	\item [] $n$ points with local equation: $x^{2}+y^{4}+z^{2}t^{2}$,
	\item [] $2$ transversal $A_{3}$ lines with three  special points with degeneration order 1,1 and 2,
	\item [] $(n-1)$ transversal $A_{3}$ lines with two special points with degeneration order 2 and 2.
\end{itemize}
Consequently,
\[
e(\widetilde{X_{b}}) = e(\mathrm{III})e(\mathrm{I}_{n}^{*}) + 4\times 3 + n\times9 +2 \times 3\times (-1) + (n-1)\times 0 = 3\times (6+n) + 9n+6=12n+24
\]
and
\[
N_{c}(\widetilde{X_{b}}) = N_{c}(\mathrm{III})N_{c}(\mathrm{I}_{n}^{*}) + n\times 1 + 2 \times 2 + (n-1) \times 3 = 2\times (5 + n) + 4n +1 = 6n+11.
\]
\medskip
\noindent\emph{$\mathrm{III} \times \mathrm{IV}^{*}$}.
The fiber of type $\mathrm{III} \times \mathrm{IV}^{*}$ has the following local types of singularities
\begin{itemize}
	\item [] $3$ points with local equation: $x^{2}+y^{4}+z^{2}t$,
	\item [] $3$ points with local equation: $x^{2}+y^{4}+z^{3}t^{2}$,
	\item [] $3$ transversal $A_{3}$ lines with two special points with degeneration orders 1 and 3,
	\item [] $1$ transversal $E_{6}$ lines with three special points with degeneration order 2.
\end{itemize}
Consequently,
\[
e(\widetilde{X_{b}}) = e(\mathrm{III})e(\mathrm{IV}^{*}) + 3\times3 + 3 \times 15 + 1\times 6  \times (-1) =
3\times 8 + 48 = 72
\]
and
\[
N_{c}(\widetilde{X_{b}}) = N_{c}(\mathrm{III})N_{c}(\mathrm{IV}^{*}) +  3\times 3 +  3 \times 2 +1\times 6= 2\times 7 + 21 = 35.
\]
\medskip
\noindent\emph{$\mathrm{IV} \times \mathrm{I}_{n}^{*}$}.
We separately consider the case $n=0$ and $n>0$.

\medskip
\noindent
\emph{Case 1: $n=0$.}

The fiber of type $\mathrm{IV} \times \mathrm{I}_{0}^{*}$ has the following local singularities
\begin{itemize}
	\item [] $4$ points with local equation: $x^{3}+y^{3}+z^{2}t$,
	\item [] $1$ transversal $D_{4}$ lines with four  special points with degeneration order 1.
\end{itemize}
Consequently,
\[
e(\widetilde{X_{b}}) = e(\mathrm{IV})e(\mathrm{I}_{0}^{*}) + 4\times8 + 1\times(-2)\times 4 = 4\times 6 + 24 = 48
\]
and
\[
N_{c}(\widetilde{X_{b}}) = N_{c}(\mathrm{IV})N_{c}(\mathrm{I}_{0}^{*}) + 4\times 1+ 1\times 4  = 3\times 5 + 8  = 23.
\]
\medskip
\noindent
\emph{Case 2: $n>0$.}

The fiber of type $\mathrm{IV} \times \mathrm{I}_{n}^{*}$, $n>0$, has the following local singularities
\begin{itemize}
	\item [] $4$ points with local equation: $x^{3}+y^{3}+z^{2}t$,
	\item [] $n$ points with local equation: $x^{3}+y^{3}+z^{2}t^{2}$,
	\item [] $2$ transversal $D_{4}$ lines with three  special points with degeneration order 1,1 and 2,
	\item [] $(n-1)$ transversal $D_{4}$ lines with two special points with degeneration order 2 and 2.
\end{itemize}
Consequently,
\[
e(\widetilde{X_{b}}) = e(\mathrm{IV})e(\mathrm{I}_{n}^{*}) + 4\times 8 + n\times20 +2 \times 4\times (-1) + (n-1)\times 0 = 4\times (6+n) + 20n + 24 = 24n + 48
\]
and
\[
N_{c}(\widetilde{X_{b}}) = N_{c}(\mathrm{IV})N_{c}(\mathrm{I}_{n}^{*}) + n\times 5 + 4\times 1 + 2 \times 4 + (n-1) \times 4 = 3\times (n + 5) + 9n + 8 = 12n + 23.
\]
\end{proof}

\section{Hodge numbers of the resolved fiber product}

Let $S_{1}\to \PP^{1}$ and $S_{2}\to \PP^{1}$ be rational elliptic surfaces with section, and let
$
X=S_{1}\times_{\PP^{1}}S_{2}.
$
Assume that $X$ admits a crepant resolution
$
\tylda X \longrightarrow X.
$
Denote by $\Sigma_i\subset \PP^{1}$ the set of points over which $S_i$ has singular fiber. For each point $p\in \Sigma_1\cap \Sigma_2$, let $\tylda X_p$ be the fiber of $\tylda X$ over $p$, and let $N_c(\tylda X_p)$
denote the number of irreducible components of $\tylda X_p$. We also write $e(\tylda X_p)$ for the Euler characteristic of this fiber. Following the local computations of the previous section, it is convenient to introduce the local defect
\[
\delta_p:=\frac{e(\tylda X_p)}{2}-N_c(\tylda X_p).
\]
For a rational elliptic surface with section, the Shioda--Tate formula gives
\[
\operatorname{rk}\MW(S_i)=8 - \sum_{p\in\Sigma_i}\bigl(N_{p}(S_{i})-1\bigr),
\]
where $N_{p}(S_{i})$ is the number of irreducible components of the fiber $(S_{i})_{p}$.
Over $\CC$, the Euler characteristic of a Kodaira fiber equals
\[
e((S_{i})_{p}) = \begin{cases}
	N_{p}(S_{i}), &\text{if $(S_{i})_{p}$ is multiplicative},\\
	N_{p}(S_{i})+1, &\text{if $(S_{i})_{p}$ is additive}.
\end{cases}
\]
The additive fibers are precisely the non-semistable Kodaira fibers
$
\mathrm{II}$, $\mathrm{III}$, $\mathrm{IV}$, $\mathrm{I}_n^*$, $\mathrm{IV}^*$, $\mathrm{III}^*$, $\mathrm{II}^*
$ (see, for example, \cite{Miranda}).  Since $e(S_i)=12$, these two
equalities give the following convenient form of Shioda--Tate.
\begin{Lem}\label{lem:mw-rank}
Let $a_i$ be the number of additive fibers of $S_i$.  Then
\[
\operatorname{rk}\MW(S_i)=\#\Sigma_i-4+a_i.
\]
\end{Lem}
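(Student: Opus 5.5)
We combine the two displayed facts just recalled: the Shioda--Tate formula and the Euler characteristic of Kodaira fibers. The extra input is the global identity $e(S_i)=12$. Since the Euler characteristic of a smooth elliptic fiber is $0$, additivity of the topological Euler characteristic over the fibration $\phi_i\colon S_i\to\PP^1$ gives
\[
12=e(S_i)=\sum_{p\in\Sigma_i} e((S_i)_p).
\]
This is the standard computation of the Euler number of an elliptic surface as the sum of the singular-fiber contributions.

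We then substitute the case description of $e((S_i)_p)$. Summing over $p\in\Sigma_i$ gives
\[
12=\sum_{p\in\Sigma_i}N_p(S_i)+a_i,
\]
because each additive fiber contributes exactly one more than its number of components. Hence $\sum_{p\in\Sigma_i}N_p(S_i)=12-a_i$.

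Inserting this into the Shioda--Tate formula yields
\[
\operatorname{rk}\MW(S_i)=8-\sum_{p\in\Sigma_i}N_p(S_i)+\#\Sigma_i=8-(12-a_i)+\#\Sigma_i=\#\Sigma_i-4+a_i,
\]
which is the claim.

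No step is a real obstacle. The only point needing care is the fiber Euler-characteristic dichotomy itself, which we take as stated. For instance, $\mathrm{I}_n$ with $n\ge1$ has $e=n=N_p$, and the case $\mathrm{I}_1$ (a nodal curve with $e=1$) fits. For additive fibers we have $e=N_p+1$: type $\mathrm{II}$ gives $e=2$ with $N_p=1$, and $\mathrm{I}_n^*$ gives $e=n+6$ with $N_p=n+5$. Since these values are recorded just before the lemma, we only need to apply them. The Shioda--Tate formula in the stated form also uses that $S_i$ is rational with section, so the Picard number is $10$.
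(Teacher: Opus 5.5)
Your proposal is correct and follows the paper's own argument: combine Euler-characteristic additivity, $e(S_i)=12=\sum_{p\in\Sigma_i}N_p(S_i)+a_i$, with the Shioda--Tate formula $\operatorname{rk}\MW(S_i)=8-\sum_{p\in\Sigma_i}(N_p(S_i)-1)$. Substituting $\sum_{p}N_p(S_i)=12-a_i$ gives $\#\Sigma_i-4+a_i$. Your spot checks of the fiber dichotomy ($e=N_p$ for $\mathrm{I}_n$, $e=N_p+1$ for the additive types) are consistent with the appendix values.
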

To simplify further formulas we shall use the notation
\[
\Sigma_{i}^{\text{mul}} \quad\text{ and } \quad \Sigma_{i}^{\text{add}}
\]
for the sets of multiplicative (semistable) and additive singular fibers.

Let $E_i/\CC(t)$ be the generic fiber of $S_i$ and put
\[
d:=\operatorname{rk}_{\ZZ}\operatorname{Hom}_{\CC(t)}(E_1,E_2).
\]
For non-isotrivial generic fibers this is $1$ when they are isogenous and
$0$ otherwise.  This formulation also covers the possible extra
endomorphisms in an isotrivial situation.

For $p\in (\Sigma_{1}\cup \Sigma_{2}) \setminus (\Sigma_{1}\cap \Sigma_{2})$ the fiber products $S_1\times_{\PP^1}S_2$ is non-singular over $p$ but the fiber $X_{p}$ is singular. The number of components of $X_{p}$ equals $N_{p}(S_{1})$ when $p\in\Sigma_{1}\setminus\Sigma_{2}$ and $N_{p}(S_{2})$ when $p\in\Sigma_{2}\setminus\Sigma_{1}$. Denote
\[R:=\sum_{p\in \Sigma_{1}\setminus \Sigma_{2}}(N_{p}(S_{1})-1) + \sum_{p\in \Sigma_{2}\setminus \Sigma_{1}}(N_{p}(S_{2})-1)\]
\begin{Pro}\label{prop:hodge}
Let $S_1,S_2$ be rational elliptic surfaces with section over $\CC$, and
suppose that $X=S_1\times_{\PP^1}S_2$ admits a \emph{smooth projective}
crepant resolution $\widetilde X$.  Write
$a_i=\#\Sigma_i^{\mathrm{add}}$.  Then
\begin{align*}
e(\widetilde X)
 &=\sum_{p\in\Sigma_1\cap\Sigma_2}e(\widetilde X_p),\\
h^{1,1}(\widetilde X)
 &=d+\#(\Sigma_1\cup\Sigma_2)-5+a_1+a_2 + R
   +\sum_{p\in\Sigma_1\cap\Sigma_2}N_c(\widetilde X_p),\\
h^{1,2}(\widetilde X)
 &=d+\#(\Sigma_1\cup\Sigma_2)-5+a_1+a_2 +R
   -\sum_{p\in\Sigma_1\cap\Sigma_2}\delta_p.
\end{align*}
\end{Pro}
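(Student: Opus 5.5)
The plan has three steps: compute $e(\widetilde X)$ by a fiberwise Euler characteristic count, compute $h^{1,1}$ from Schoen's description of the Picard group, and obtain $h^{1,2}$ from $e=2(h^{1,1}-h^{1,2})$.

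\emph{Euler characteristic.} Use additivity of $e$ over the fibration $\widetilde X\to\PP^1$. Over a point where both fibers are smooth, the fiber of $\widetilde X$ is a product of two elliptic curves, so it has $e=0$. Over $p\in\Sigma_1\setminus\Sigma_2$ the fiber is $(S_1)_p\times E$ with $E$ smooth elliptic, so again $e=0$, and symmetrically for $p\in\Sigma_2\setminus\Sigma_1$. A stratification argument (the fibration is locally trivial over the smooth locus) then leaves only the contributions $e(\widetilde X_p)$ for $p\in\Sigma_1\cap\Sigma_2$. This gives the first formula.

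\emph{$h^{1,1}$.} Since $\widetilde X$ is smooth projective with $K\sim0$ and $h^{1,0}=h^{2,0}=0$, we have $h^{1,1}=\rho(\widetilde X)$. I would compute $\rho$ following Schoen, with the generic fiber $E_1\times E_2$ over $\CC(t)$. Its Néron--Severi rank is
\[
2+\operatorname{rk}\MW(S_1)+\operatorname{rk}\MW(S_2)+d,
\]
coming from the classes of $E_1$ and $E_2$, the Mordell--Weil translates, and graphs of homomorphisms. Spreading these out gives divisors on $\widetilde X$. In addition there are the vertical divisors: the class of a fiber, plus $N-1$ extra classes for each reducible fiber with $N$ components. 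The latter contribute $R$ from $\Sigma_1\triangle\Sigma_2$ and $\sum_{p\in\Sigma_1\cap\Sigma_2}(N_c(\widetilde X_p)-1)$ from the common points. Hence
\[
\rho=3+\operatorname{rk}\MW_1+\operatorname{rk}\MW_2+d+R+\sum_{p\in\Sigma_1\cap\Sigma_2}(N_c(\widetilde X_p)-1).
\]
Now substitute Lemma~\ref{lem:mw-rank}, which gives $\operatorname{rk}\MW_i=\#\Sigma_i-4+a_i$, and use
\[
\#\Sigma_1+\#\Sigma_2-\#(\Sigma_1\cap\Sigma_2)=\#(\Sigma_1\cup\Sigma_2).
\]
The constant becomes $3-8=-5$, and the $-1$'s in the last sum cancel $\#(\Sigma_1\cap\Sigma_2)$. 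This yields the stated $h^{1,1}$.

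\emph{$h^{1,2}$.} From $e=2(h^{1,1}-h^{1,2})$ we get
\[
h^{1,2}=h^{1,1}-\sum_p \frac{e(\widetilde X_p)}{2}.
\]
Substituting the formula for $h^{1,1}$ and using $N_c(\widetilde X_p)-e(\widetilde X_p)/2=-\delta_p$ gives the stated expression. The identity $h^{1,1}=\rho$ together with $h^{2,0}=h^{1,0}=0$ needs $H^1(\mathcal O)=H^2(\mathcal O)=0$. For $H^1$ this follows because $\widetilde X$ is birational to $X$, which fibers over $\PP^1$ with fibers having $q=2$ and monodromy having no invariants on $H^1$; for $H^2$ it follows from Serre duality with $h^{1,0}=0$.

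\emph{Main obstacle.} The key step is proving that the Picard number is exactly the count above, with all the listed classes independent and generating $\operatorname{NS}(\widetilde X)\otimes\QQ$. The lower bound is straightforward. For the upper bound I would restrict to the generic fiber: there is an exact sequence
\[
0\to V\to \operatorname{NS}(\widetilde X)_\QQ\to \operatorname{NS}(E_1\times E_2/\CC(t))_\QQ\to 0,
\]
where $V$ is spanned by vertical divisors. Its kernel consists of combinations of fiber components, with one relation per fiber (the whole fiber being linearly equivalent to a general fiber). I would invoke Schoen's argument that $\operatorname{NS}$ of the generic fiber has rank $2+\operatorname{rk}\MW_1+\operatorname{rk}\MW_2+d$, using the Lang--Néron description. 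I would also need to check that no additional relations among vertical components arise in the non-reduced cases, using a negative-definiteness (Zariski-lemma type) argument on each fiber.
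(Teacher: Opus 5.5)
Your proposal is correct and follows the paper's proof essentially step by step: additivity of $e$ over the base, Schoen's restriction-to-the-generic-fiber count of $\rho(\widetilde X)$ combined with Lemma~\ref{lem:mw-rank}, and then $e=2(h^{1,1}-h^{1,2})$; you also justify $H^1(\mathcal O)=H^2(\mathcal O)=0$ and the independence of the vertical components, which the paper only asserts. One terminological correction: $2+\operatorname{rk}\MW(S_1)+\operatorname{rk}\MW(S_2)+d$ is the rank of the Picard group of the generic fiber $E_1\times E_2$ over $\CC(t)$, not of its N\'eron--Severi group, since the Mordell--Weil classes lie in $\operatorname{Pic}^0$; the target of your exact sequence should therefore be $\operatorname{Pic}$ of the generic fiber.
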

\begin{proof}
For $p\notin\Sigma_1\cap\Sigma_2$, at least one factor of the fiber is a
smooth elliptic curve and therefore has Euler characteristic zero.  The
resolution is an isomorphism over such a fiber, and additivity over the
base proves the first formula.

We recall the divisor calculation, following Schoen's restriction
argument \cite[Section~3]{Schoen}.  The fiber class, the two zero-section
divisors, the Mordell--Weil divisors from the two elliptic surfaces, and
the graphs of a basis of $\operatorname{Hom}(E_1,E_2)$ are independent.
Over a singular value of at least one surface one further obtains $N_c(\widetilde X_p)-1$
independent vertical components.  Conversely, restricting a divisor to
the generic abelian surface and then subtracting these classes leaves a
vertical divisor; its coefficients are constant on every fiber and hence
it is a multiple of the fiber class.  Thus
\begin{align*}
\rho(\widetilde X)=3+d+\operatorname{rk}\MW(S_1)
+\operatorname{rk}\MW(S_2)
+\sum_{p\in\Sigma_1\cup\Sigma_2}\bigl(N_c(\widetilde X_p)-1\bigr)\\
=3+d+\operatorname{rk}\MW(S_1)
+\operatorname{rk}\MW(S_2) + R
+\sum_{p\in\Sigma_1\cap\Sigma_2}\bigl(N_c(\widetilde X_p)-1\bigr).
\end{align*}
Because $\widetilde X$ is smooth and projective and has
$H^1(\widetilde X,\mathcal O)=H^2(\widetilde X,\mathcal O)=0$, the
exponential sequence and Lefschetz $(1,1)$ theorem identify this rank with
$h^{1,1}$.  Substitution of Lemma~\ref{lem:mw-rank} gives the stated
formula.  Finally, $K_{\widetilde X}\simeq\mathcal O_{\widetilde X}$ and
the same vanishing give
\[
e(\widetilde X)=2\bigl(h^{1,1}(\widetilde X)-h^{1,2}(\widetilde X)\bigr).
\]
Combining this identity with the first two formulas and the definition of
$\delta_p$ proves the formula for $h^{1,2}$.
\end{proof}
\begin{Rem}
The previous formula shows that the Hodge numbers are controlled by two kinds of data:
\begin{itemize}
\item the global configuration of singular fibers, encoded by the sets $\Sigma_1,\Sigma_2$, the number of additive fibers, and the parameter $d$;
\item the local geometry of the singularities of the fiber product, encoded by the numbers $N_c(\tylda X_p)$, $e(\tylda X_p)$, or equivalently by the defects $\delta_p$ and $R$.
\end{itemize}
Hence the local classification obtained in the previous section feeds directly into the global computation of the Hodge numbers.
\end{Rem}
\begin{table}[ht]
	\centering
	\small
	\renewcommand{\arraystretch}{1.15}
	\setlength{\tabcolsep}{4pt}
	\begin{tabular}{>{\centering\arraybackslash}m{0.30\linewidth}||>{\raggedright\arraybackslash}m{0.60\linewidth}}
		\fb \ww{$\alpha=\#\{\textup{additive fibers}\}-\delta$}
		&
		\fb \ww{Pairs of fibers with crepant resolution} \\
		\hhline{=::=}
		\fb \ww{$\alpha=0$} &
		\ww{\begin{tabular}[t]{@{}l@{}}
				$\mathrm{I}_n\times \mathrm{I}_m,\;
				\mathrm{II}\times \mathrm{II},\;
				\mathrm{III}\times \mathrm{III},\;
				\mathrm{IV}\times \mathrm{IV}$
		\end{tabular}} \\
		\hhline{=::=}
		\fb \ww{$\alpha=1$} &
		\ww{\begin{tabular}[t]{@{}l@{}}
				$\mathrm{I}_n\times \mathrm{I}_m^{*},\;
				\mathrm{I}_n\times \mathrm{III},\;
				\mathrm{I}_n\times \mathrm{IV},\;
				\mathrm{I}_n\times \mathrm{IV}^{*},\;
				\mathrm{I}_n\times \mathrm{III}^{*},\;
				\mathrm{I}_n\times \mathrm{II}^{*},$ \\
				$\mathrm{I}_n^{*}\times \mathrm{II},\;
				\mathrm{I}_n^{*}\times \mathrm{III},\;
				\mathrm{I}_n^{*}\times \mathrm{IV},\;
				\mathrm{II}\times \mathrm{III}^{*},\;
				\mathrm{III}\times \mathrm{IV}^{*}$
		\end{tabular}} \\
		\hhline{=::=}
		\fb \ww{$\alpha=2$} &
		\ww{\begin{tabular}[t]{@{}l@{}}
				$\mathrm{I}_0^{*}\times \mathrm{II},\;
				\mathrm{I}_0^{*}\times \mathrm{III},\;
				\mathrm{II}\times \mathrm{IV}^{*}$
		\end{tabular}} \\
	\end{tabular}
\caption{Classification of admissible fiber pairs according to
		$\alpha=\#\{\textup{additive fibers}\}-\delta$, where
		$\delta=\frac{e}{2}-N_c$.}
\end{table}

\section{Numerical behavior in positive characteristic}
\label{sec6}

Proposition~\ref{prop:hodge} is a statement over $\CC$ and does not
extend verbatim to positive characteristic: its proof uses the complex
exponential sequence, the Lefschetz $(1,1)$ theorem, and Hodge
decomposition.  The local component counts and the additivity of the
$\ell$-adic Euler characteristic ($\ell\ne p$) remain meaningful when the
displayed resolutions are defined in characteristic $p$.  The examples
below record those numerical invariants without calling them complex
Hodge numbers.  In particular, a negative value obtained by formally
substituting into Proposition~\ref{prop:hodge} signals that the
substitution is invalid.

\begin{Exm} \label{ex:61}
 Let $S$ be the rational elliptic surface defined by a Weierstrass equation
\[
y^{2}=x^{3}+324t(5t-8)x-432t(32t^3-105t^2+60t+40).
\]
The discriminant of $S$ equals
\[
2^{18}\cdot 3^9\cdot t^2(t-1)^5(16t-25).
\]
From the Tate algorithm it follows that the surface $S$ has the following singular fiber types:

\begin{table}[H]
	\centering
	\renewcommand{\arraystretch}{1.2}
	\begin{tabular}{lcccc}
		\toprule
		Singular fiber
		& $\mathrm{IV}$
		& $\mathrm{II}$
		& $\mathrm{I}_{5}$ & $\mathrm{I}_{1}$ \\
		\midrule
		Point on the base & $t=\infty$ & $t=0$ & $t=1$ & $t=\frac{25}{16}$ \\
		\bottomrule
	\end{tabular}
\end{table}

The local constructions give a compact analytic crepant resolution of
$S\times_{\PP^{1}}S$.  Since projectivity (and even the K\"ahler property)
is not established for that model, Proposition~\ref{prop:hodge} is not
used to assign it Hodge numbers.

In characteristic $5$, the fibers $\mathrm{I}_1$ and $\mathrm{II}$ collide
and produce a fiber of type $\mathrm{III}$.  Consequently $S_5$ has the
Weierstrass equation
\[
y^2=x^3-2tx+t^4
\]
over $\FF_5$, with standard discriminant
\[
3t^3(t-1)^5.
\]
From the Tate algorithm it follows that the surface $S_5$ has the following singular fiber types:

\begin{table}[H]
	\centering
	\renewcommand{\arraystretch}{1.2}
	\begin{tabular}{lccc}
		\toprule
		Singular fiber
		& $\mathrm{IV}$
		& $\mathrm{III}$
		& $\mathrm{I}_{5}$ \\
		\midrule
		Point on the base & $t=\infty$ & $t=0$ & $t=1$ \\
		\bottomrule
	\end{tabular}
\end{table}

For the self-fiber product $S_{5}\times_{\PP^{1}}S_{5}$ the local table
gives
\[
\sum_pN_c(\widetilde X_p)=25+10+4=39,
\qquad e(\widetilde X)=50+24+12=86.
\]
The divisor count used over $\CC$ would formally give $42$, and the
complex identity would then give $42-86/2=-1$.  This contradiction is
precisely why these numbers must not be labelled $h^{1,1}$ and $h^{1,2}$
in characteristic $5$.
\end{Exm}

\begin{Exm}\label{ex:62} Let $S$ be the rational elliptic surface defined by a Weierstrass equation
\[
y^{2}=x^{3}-3(t^4-12t^3+14t^2+12t+1)x-2(t^2+1)(t^4-18t^3+74t^2+18t+1).
\]
The discriminant of $S$ equals
\[
2^12\cdot 3^6\cdot t^5\cdot (t^2-11t-1).
\]

From the Tate algorithm it follows that the surface $S$ has the following singular fiber types:

\begin{table}[H]
	\centering
	\renewcommand{\arraystretch}{1.2}
	\begin{tabular}{lcccc}
		\toprule
		Singular fiber
		& $\mathrm{I}_{5}$
		& $\mathrm{I}_{5}$
		& $\mathrm{I}_{1}$
		& $\mathrm{I}_{1}$ \\
		\midrule
		Point on the base
		& $t=\infty$
		& $t=0$
		& $t=\frac{11+5\sqrt{5}}{2}$
		& $t=\frac{11-5\sqrt{5}}{2}$ \\
		\bottomrule
	\end{tabular}
\end{table}

The fiber product $S\times_{\PP^{1}}S$ has a projective crepant resolution
$\widetilde X$ which is a rigid Calabi--Yau threefold with
$h^{1,1}=52$ \cite{Schoen}.  The surface is the modular surface for
$\Gamma_1(5)$, and the associated weight-$4$, level-$5$ modular form is
discussed in \cite{NorikoUpdate}.

In characteristic $5$, the two $\mathbf{I}_1$ fibers collide at $t=3$ and
produce a fiber of type $\mathbf{II}$. The surface $S_{5}$
obtained from $S$ by reduction modulo $5$ has, after the change of variable $x=(1-t')^{2}x', y=(1-t')^{3}y', t'=\frac{1+t}{1-t}$,
the following Weierstrass equation
\[
y^{2}=x^{3}+2(t+2)^{4}x+(t+2)(t+3)^{5}
\]
with discriminant
\[
(t-1)^5(t+1)^5(t+2)^2
\]
and singular fiber
types:
\begin{table}[H]
	\centering
	\renewcommand{\arraystretch}{1.2}
	\begin{tabular}{lccc}
		\toprule
		Singular fiber
		& $\mathrm{I}_{5}$
		& $\mathrm{I}_{5}$
		& $\mathrm{II}$ \\
		\midrule
		Point on the base & $t=1$ & $t=-1$ & $t=-2$ \\
		\bottomrule
	\end{tabular}
\end{table}
It is, up to reparametrization, the extremal rational elliptic surface with
three singular fibers from \cite[Theorem~4.1]{LANG}.  For its self-fiber
product the local data give
\[
\sum_pN_c(\widetilde X_p)=25+25+1=51,
\qquad e(\widetilde X)=50+50+6=106.
\]
The formal complex divisor formula gives $52$ and hence the meaningless
value $52-106/2=-1$; it does not compute Hodge numbers in characteristic
$5$.
\end{Exm}

%
\begin{Exm}\label{ex:63}
Let $S$ be the rational elliptic surface defined by the Weierstrass equation
\[
y^2=x^3-3t(t-1)(729t^2-945t+280)x
-2t(t-1)^2(19683t^3-28431t^2+11988t-1000).
\]
The discriminant of $S$ equals
\[
2^{12}\cdot 3^3\cdot t^2(t-1)^3(189t-125)^2.
\]
From the Tate algorithm it follows that the surface $S$ has the following
singular fiber types:
\begin{table}[H]
	\centering
	\renewcommand{\arraystretch}{1.2}
	\begin{tabular}{lcccc}
		\toprule
		Singular fiber
		& $\mathrm{I}_{5}$
		& $\mathrm{II}$
		& $\mathrm{III}$
		& $\mathrm{I}_{2}$ \\
		\midrule
		Point on the base
		& $t=\infty$
		& $t=0$
		& $t=1$
		& $t=\frac{125}{189}$ \\
		\bottomrule
	\end{tabular}
\end{table}

In characteristic $7$, the fibers $\mathbf{I_{5}}$ and $\mathbf{I_{2}}$
collide at $t=\infty$ and produce a fiber of type $\mathbf{I_{7}}$.
Consequently, the surface $S_7$ obtained from $S$ by reduction modulo $7$
has the Weierstrass equation
\[
y^2=x^3+4t^3(t-1)x+2t(t-1)^5
\]
over $\FF_7$, with discriminant
\[
6t^2(t-1)^3.
\]
From the Tate algorithm it follows that the surface $S_7$ has the following
singular fiber types:
\begin{table}[H]
	\centering
	\renewcommand{\arraystretch}{1.2}
	\begin{tabular}{lccc}
		\toprule
		Singular fiber
		& $\mathrm{I}_{7}$
		& $\mathrm{II}$
		& $\mathrm{III}$ \\
		\midrule
		Point on the base & $t=\infty$ & $t=0$ & $t=1$ \\
		\bottomrule
	\end{tabular}
\end{table}

For the self-fiber product the local data give
\[
\sum_pN_c(\widetilde X_p)=49+1+4=54,
\qquad e(\widetilde X)=98+6+12=116.
\]
The formal complex divisor formula gives $57$, again producing the
meaningless value $57-116/2=-1$.  No negative Hodge number is asserted.
\end{Exm}

Section~\ref{sec10} returns to Example~\ref{ex:61} through its Kummer
quotients.
\newpage

\section{Kummer fibrations}

Every birational map $\alpha_{k}\colon S_{k}\to S_{k}$ preserving the fibration
$\phi_{k}\colon S_{k}\to \PP^{1}$ is an automorphism
(\cite[Thm.~1, p.~197]{Szafarewicz}). For a pair of automorphisms
$\alpha_k\colon S_{k}\to S_{k}$ acting fiberwise $(k=1,2)$ we can consider the map
\[
\alpha:=\alpha_{1}\times_{\PP^{1}} \alpha_{2}\colon
S_{1}\times_{\PP^{1}} S_{2} \to S_{1}\times_{\PP^{1}} S_{2}.
\]
If the fiber product $X = S_{1}\times_{\PP^{1}} S_{2}$ admits a crepant resolution $\tylda X\to X$ and $\alpha$ lifts to a morphism $\tylda\alpha:\tylda X\to \tylda X$, then in fact $\tylda\alpha$ is an automorphism.
In the cases considered below, the lifted involution acts transversally to
each fixed curve as $\operatorname{diag}(-1,-1)$ and preserves the
canonical form.  The quotient therefore has a transversal $A_1$ locus,
whose blow-up is crepant; the additional isolated quotient germs are
treated case by case.  When the resulting model is smooth and projective,
it is again a Calabi--Yau threefold.

Schoen \cite{Schoen} studied the most natural case of this construction, namely
when the singular fibers $(S_{i})_{p}$, for $p\in\Sigma_{1}\cap\Sigma_{2}$, are of types $\mathrm{I}_{n_{i}}$ and the involution $\alpha$ comes from inversions $\alpha_{i}(x)= - x$ on the fibers.

Kapustka and Kapustka \cite{KK} considered fiberwise involutions $i_1,i_2$
when $S_i$ is a rational elliptic surface with section
$b_{i}\colon \PP^{1}\to S_{i}$ for $i=1,2$. Suppose that $S_{1}$ and $S_{2}$ admit only reduced (i.e. of type $\mathrm{I}_{n}$, $\mathrm{II}$, $\mathrm{III}$ or $\mathrm{IV}$)
singular fibers.
Then the fiber product $X:=S_{1}\times_{\PP^{1}} S_{2}$ carries the induced section
$b=(b_{1},b_{2})\colon \PP^{1}\to X$.

Now consider the involutions
\[
i_{k}\colon S_{k}\ni x\mapsto b_{k}(\phi_{k}(x))-x\in S_{k},
\]
where $\phi_{k}\colon S_{k}\to \PP^{1}$ is the elliptic fibration, for $k=1,2$.
This gives an involution
\[
i\colon X\ni (x_{1},x_{2})\mapsto \bigl(i_{1}(x_{1}),i_{2}(x_{2})\bigr)\in X.
\]
This is a well-defined involutive isomorphism preserving the canonical form
on the smooth locus of $X$.

We begin with the action of $i$ on singular fibers.  The following
description is \cite[Lemma~2.2]{KK}.

\begin{Lem} Assume that $S$ is a rational elliptic surface with a chosen zero section and reduced fibers. Let $i$ be the involution of the form $i\colon x\mapsto b-x$, where $b$ is a section of $S$. Let $F$ be a singular fiber of $S$. Then the following possibilities occur:
	\begin{enumerate}\leftskip=-9mm
		\begin{minipage}{.54\textwidth}
			\item[\textup{$(1)$}] The fiber $F$ is of type \textup{$\mathrm{I}_{1}$}. Then $i$ acts on $F$ by symmetry with three fixed points.
			\vspace{5mm}
		\end{minipage}
		\quad
		\begin{minipage}{.38\textwidth}
			\centering
			\includegraphics[width=0.4\textwidth]{node.1}
			\vspace{5mm}
		\end{minipage}

		\begin{minipage}{.54\textwidth}
			\item[\textup{$(2)$}] The fiber $F$ is of type \textup{$\mathrm{I}_{2k+1},$} where $k\geq 1$. Then $i$ acts on one of the components of $F$ by symmetry, with two fixed points away from the singularities of $F$, and interchanges the corresponding pairs of the remaining components. The singular point opposite to the component on which $i$ acts is fixed.
			\vspace{5mm}
		\end{minipage}
		\quad
		\begin{minipage}{.38\textwidth}
			\centering
			\includegraphics[width=0.4\textwidth]{nkaty.1}
			\vspace{5mm}
		\end{minipage}

\begin{minipage}{.94\linewidth}
			\item[\textup{$(3)$}] The fiber $F$ is of type \textup{$\mathrm{I}_{2k},$} where $k\geq 1$. Then we have one of the following cases:
			\vspace{5mm}
			\begin{enumerate}\leftskip=-9mm
				\begin{minipage}{.52\textwidth}
					\item[\textup{$(a)$}] The involution $i$ has two fixed points, namely two opposite singular points of $F$, and interchanges pairs of components of $F$.
					\vspace{5mm}
				\end{minipage}
				\quad
				\begin{minipage}{.38\textwidth}
					\centering
					\includegraphics[width=0.4\textwidth]{nkaty.2}
					\vspace{5mm}
				\end{minipage}

				\begin{minipage}{.52\textwidth}
					\item[\textup{$(b)$}] The involution $i$ acts by symmetry on two opposite components of $F$ and interchanges the corresponding remaining pairs of components. In this case $i$ has $4$ fixed points away from the singularities of $F$.
					\vspace{5mm}
				\end{minipage}
				\quad
				\begin{minipage}{.38\textwidth}
					\centering
					\includegraphics[width=0.4\textwidth]{nkaty.3}
					\vspace{5mm}
				\end{minipage}
			\end{enumerate}
		\end{minipage}

		\begin{minipage}{.54\textwidth}
			\item[\textup{$(4)$}] The fiber $F$ is of type \textup{$\mathrm{II}.$} Then $i$ has two fixed points, one of which is the singular point of $F$.
			\vspace{5mm}
		\end{minipage}
		\quad
		\begin{minipage}{.38\textwidth}
			\centering
			\includegraphics[width=0.3\textwidth]{cusp.1}
			\vspace{5mm}
		\end{minipage}

\begin{minipage}{.94\linewidth}
			\item[\textup{$(5)$}] The fiber $F$ is of type \textup{$\mathrm{III}.$} Then we have one of the following cases:
			\vspace{5mm}
			\begin{enumerate}\leftskip=-9mm
				\begin{minipage}{.52\textwidth}
					\item[\textup{$(a)$}] The involution $i$ fixes only the singular point and interchanges the two components of $F$.
					\vspace{5mm}
				\end{minipage}
				\quad
				\begin{minipage}{.38\textwidth}
					\centering
					\includegraphics[width=0.4\textwidth]{II.1}
					\vspace{5mm}
				\end{minipage}

				\begin{minipage}{.52\textwidth}
					\item[\textup{$(b)$}] The involution $i$ acts on both components, fixing the singular point and one additional point on each component, for a total of three fixed points.
					\vspace{5mm}
				\end{minipage}
				\quad
				\begin{minipage}{.38\textwidth}
					\centering
					\includegraphics[width=0.4\textwidth]{II.2}
					\vspace{5mm}
				\end{minipage}
			\end{enumerate}
		\end{minipage}

		\begin{minipage}{.54\textwidth}
			\item[\textup{$(6)$}] The fiber $F$ is of type \textup{$\mathrm{IV}.$} Then $i$ fixes the triple point, interchanges two components of the fiber, and acts on the third component with one additional fixed point. Thus there are two fixed points in total.
		\end{minipage}
		\quad
		\begin{minipage}{.38\textwidth}
			\centering
			\includegraphics[width=0.3\textwidth]{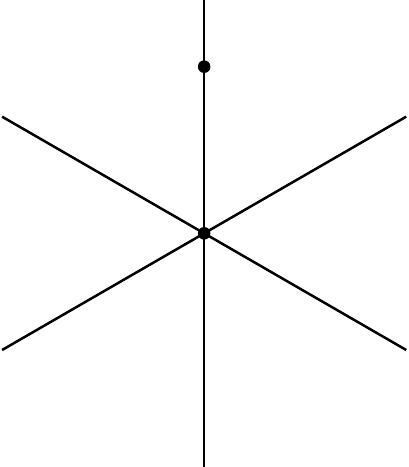}
		\end{minipage}

	\end{enumerate}
\end{Lem}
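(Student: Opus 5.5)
Since \(b\) is a section, \(i\) is the composition of inversion \(x\mapsto -x\) with translation by \(b\). On each fiber it is therefore an automorphism of the group law. I would therefore work fiberwise on the smooth locus \(F^{\#}\) of each singular fiber and use its group structure: \(F^{\#}\cong \mathbb G_m\times\ZZ/n\) for \(\mathrm{I}_n\), \(\mathbb G_a\) for \(\mathrm{II}\), \(\mathbb G_a\times\ZZ/2\) for \(\mathrm{III}\), and \(\mathbb G_a\times\ZZ/3\) for \(\mathrm{IV}\). The section \(b\) meets \(F\) in a point of \(F^{\#}\), lying on a component indexed by some \(c\in\ZZ/n\) (respectively \(\ZZ/2,\ZZ/3\)). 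On the component group, \(i\) acts by \(j\mapsto c-j\), and on the identity-component factor by \(u\mapsto \beta u^{-1}\) (multiplicative) or \(u\mapsto \beta-u\) (additive). Since \(i\) is an automorphism of \(S\), it extends to all of \(F\) and permutes the singular points, so these points are handled by continuity.

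\emph{Component action.} For \(\mathrm{I}_n\), the components fixed by \(j\mapsto c-j\) satisfy \(2j=c\). If \(n\) is odd there is exactly one such component; the other components are swapped in pairs symmetric about it, and the opposite node is fixed. If \(n\) is even, either \(c\) is odd and no component is fixed, or \(c\) is even and two opposite components are fixed. These are cases \((2)\), \((3a)\) and \((3b)\). For \(\mathrm{III}\) the two subcases correspond to \(c=1\) and \(c=0\) in \(\ZZ/2\). For \(\mathrm{IV}\), \(2j=c\) has a unique solution in \(\ZZ/3\), so exactly one component is preserved and the other two are swapped. For \(\mathrm{II}\) there is a single component.

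\emph{Fixed points on a preserved component.} On a preserved component, the map \(u\mapsto\beta u^{-1}\) on \(\mathbb G_m\) has two fixed points, \(\pm\sqrt\beta\). The map \(u\mapsto\beta-u\) on \(\mathbb G_a\) has one fixed point, \(\beta/2\), since the characteristic is \(0\). The remaining fixed points are singular points of \(F\).
\begin{itemize}
\item For \(\mathrm{I}_1\), the involution swaps the two branches at the node, so the node is fixed, giving three fixed points in total.
\item For \(\mathrm{I}_n\) with \(n\ge2\), a node is fixed exactly when the two components through it are preserved or swapped with each other. A short check of \(j\mapsto c-j\) on edges of the \(n\)-cycle gives the stated counts: in case \((3b)\) no node is fixed, and in case \((3a)\) two opposite nodes are fixed.
\item For \(\mathrm{II}\), \(\mathrm{III}\) and \(\mathrm{IV}\), the unique singular point is always fixed. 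Adding the additive fixed points on the preserved components gives \(2\), \(1\) or \(3\), and \(2\) fixed points respectively.
\end{itemize}

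\emph{Main obstacle.} The delicate step is justifying the fiberwise description: the automorphism \(i\) restricted to a preserved component must really be the stated involution of \(\PP^1\), and in particular not the identity. For this I would use that the identity-component factor of \(F^{\#}\) is the identity component of the Néron model, on which inversion acts as \(u\mapsto u^{-1}\) or \(u\mapsto -u\). A nontrivial involution of \(\PP^1\) has exactly two fixed points. These can then be matched against the singular points lying on that component: for a multiplicative component they are \(\pm\sqrt\beta\), distinct from the nodes at \(0\) and \(\infty\); for an additive component one fixed point is the point at \(\infty\), which is the singular point.
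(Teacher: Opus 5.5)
The paper gives no argument of its own for this lemma; it quotes it from \cite[Lemma~2.2]{KK}. Your N\'eron-model computation is therefore an independent, self-contained route. It also yields slightly more than the statement: the subcase is decided by the index $c$ of the component met by $b$. For instance, $b=O$ forces $\mathrm{I}_{2k}^{b}$ and $\mathrm{III}^{b}$, which is the situation of the standard Kummer involution. The following parts are correct:
\begin{itemize}
\item the component-group analysis;
\item the fixed points $\pm\sqrt\beta$ and $\beta/2$ on preserved components;
\item the cases $\mathrm{I}_1$, $\mathrm{I}_n$ with $n\ge3$, $\mathrm{II}$, $\mathrm{III}$ and $\mathrm{IV}$.
\end{itemize}
(A small wording point: $x\mapsto b-x$ is not an automorphism of the group law unless $b=O$. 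It is an affine involution compatible with the group law.)

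One step, however, is false as stated. You claim that a node of $\mathrm{I}_n$, $n\ge2$, is fixed exactly when the two components through it are preserved or swapped with each other. For $n\ge3$ this holds: adjacent components meet in a single point, and both being preserved would force $2\equiv0\pmod n$. For $\mathrm{I}_2$ it fails, because $\Theta_0$ and $\Theta_1$ meet in two nodes.
\begin{itemize}
\item In case (3b), $c=0$, both components are preserved. Yet on each of them $i$ acts by $u\mapsto\beta/u$, which interchanges $u=0$ and $u=\infty$, i.e.\ the two nodes. Your criterion would declare both nodes fixed and give six fixed points, contradicting your own conclusion for (3b).
\item In case (3a), $c=1$, the swap of $\Theta_0$ and $\Theta_1$ only shows that the pair of nodes is preserved. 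It does not show that each node is fixed.
\end{itemize}
The vertex action $j\mapsto c-j$ does not determine the action on the two parallel edges of the $2$-cycle.

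The repair is to track nodes in the coordinates of the N\'eron polygon. Take $\Theta_j\cong\PP^1$ with coordinate $u_j$, glued by $P_{j,j+1}=\{u_j=\infty\}=\{u_{j+1}=0\}$; this gluing is compatible with translation by $(1,1)$. Since $i(u,j)=(\beta u^{-1},c-j)$ and $u\mapsto\beta/u$ exchanges $0$ and $\infty$, one gets $i(P_{j,j+1})=P_{c-j-1,\,c-j}$. Hence:
\begin{itemize}
\item $P_{j,j+1}$ is fixed if and only if $2j+1\equiv c\pmod n$;
\item $\Theta_j$ is preserved if and only if $2j\equiv c\pmod n$.
\end{itemize}
These two congruences give (1), (2), (3a) and (3b) uniformly for every $n\ge1$, including $n=2$:
\begin{itemize}
\item for $n$ odd, there is one preserved component and one fixed node;
\item for $n$ even and $c$ even, there are two preserved components and no fixed node;
\item for $n$ even and $c$ odd, there is no preserved component, and the two opposite nodes with $j=(c-1)/2$ and $j=(c-1)/2+n/2$ are fixed.
\end{itemize}
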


\begin{Rem}
	To simplify notation, we shall denote by $\mathrm{I}_{2k}^{a}$ and $\mathrm{I}_{2k}^{b}$ (resp. $\mathrm{III}^{a}$ and $\mathrm{III}^{b}$) a fiber of type $\mathrm{I}_{2k}$ (resp. $\mathrm{III}$) with the involution described in points $(3-a)$ and $(3-b)$ (resp. $(5-a)$ and $(5-b)$) of the previous lemma.
\end{Rem}

\begin{Cor}
	We can perform a local analytic change of coordinates around $P$ in such
	a way that the fibration is preserved and the surface $S$ is given in
	$\mathbb{C}^3$ with coordinates $(x,t,u)$ by the equation:
	\begin{enumerate}
		\item [$\mathrm{I}_{0}$:] $u^2 = x - t$ and the fibration is given by $t$,
		\item [$\mathrm{I}_{n}$:] $u^2 = x^2 		- t$ and the fibration is given by $t$,
		\item [$\mathrm{II}$:] $u^2 = x^3 - t$ and the fibration is given by $t$,
		\item [$\mathrm{III}^{a}$] $u^2 = x^4 - t$ and the fibration is given by $t$,
		\item [$\mathrm{III}^{b}$:] $u^2 = x - t$ and the fibration is given by $xt$,
		\item [$\mathrm{IV}$:] $u^2 = x^2 - t$ and the fibration is given by $xt$,
	\end{enumerate}
	where the involution is given by $u \mapsto -u$.
\end{Cor}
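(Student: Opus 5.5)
The plan is to reduce everything to a local statement about the surface alone, since the fiber product germ near a point of $F_1\times F_2$ is controlled by the surface germs. Near a point $P$ of a singular fiber $F$ of $S$, the involution $x\mapsto b-x$ is a fiberwise automorphism of order two fixing $P$. Because it preserves the relative $2$-form $\omega_S/\phi^*dt$ up to sign, it acts on the tangent space at a smooth point of the fiber with one eigenvalue $1$ (the base direction) and one eigenvalue $-1$. The natural approach is therefore equivariant linearization: by Cartan's lemma we may choose analytic coordinates near $P\in S$ in which the involution is linear, and, averaging the base coordinate $t$ (which is invariant), keep $t$ as one coordinate. I would then write $S$ near $P$ as a double cover of its quotient $S/\langle i\rangle$. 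The quotient is a smooth surface germ with coordinates $(x,t)$ whenever the fixed locus of $i$ on $S$ is a curve near $P$ or $P$ is an isolated fixed point of a reflection type. The double cover is branched along the image $B$ of the fixed curve, so $S$ is locally $u^2=g(x,t)$ with $i\colon u\mapsto -u$.

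The key step is to identify the branch curve $g=0$ and the function $t$ on the quotient in each case of the preceding lemma of \cite{KK}. For an isolated fixed point on a smooth fiber point ($\mathrm{I}_0$, or a fixed point away from singularities of a reducible fiber), the fixed curve of $i$ on $S$ is the bisection/trisection of $2$-torsion-type points $2x=b$, which is transverse to the fiber. Its image is a smooth curve transverse to the image fiber, so after a coordinate change $g=x-t$. At a node of $\mathrm{I}_n$ fixed by $i$, the fiber $u^2=x^2$ has two branches interchanged or preserved, and the branch curve is tangent to order two to the fiber image, giving $u^2=x^2-t$. Similarly, the cusp of $\mathrm{II}$ gives $u^2=x^3-t$, and the tacnode of $\mathrm{III}^a$, whose two components are swapped, gives $u^2=x^4-t$. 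In these cases I would compute the multiplicity of intersection of $B$ with the image of the fiber, namely $2,3,4$, from the local equation of the fiber ($u^2=x^k$ is the only form of an $A_{k-1}$ curve singularity with the involution acting by $u\mapsto-u$). I would then use the Weierstrass preparation theorem in $t$ to normalize $g$ to $x^k-t$ up to units absorbed into $u$ and $x$.

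The remaining cases $\mathrm{III}^b$ and $\mathrm{IV}$ are where $i$ preserves every local branch through $P$. There the quotient of the fiber is not irreducible over $t$, and the function $t$ on the quotient is no longer a coordinate but a product. For $\mathrm{III}^b$ the two components are preserved, so in the quotient they map to two smooth branches, the image of the fiber is $xt'=0$ with $t'$ a new coordinate, and the fixed curve is a smooth branch $u^2=x-t'$. For $\mathrm{IV}$ one component is preserved and two are swapped, giving the fibration $xt'$ with branch curve $x^2-t'$. I expect this step to be the main obstacle: one must check that the quotient is smooth and that the branch curve and the fibration function are simultaneously normalizable. I would handle it by first fixing coordinates on the smooth quotient making the image of the preserved component $\{x=0\}$, then writing $\phi=x\cdot t'$ with $t'$ defining the other image branch, and finally straightening $B$ relative to these two axes using its known contact orders, which I would read off from the number of fixed points in the preceding lemma.
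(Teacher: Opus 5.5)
The paper gives no proof of this corollary; it presents it as a consequence of the cited lemma of Kapustka--Kapustka. Your route is the natural one: linearize $i$ at $P$, pass to the quotient, write $S$ as $u^2=g$ over it, and normalize the pair formed by the branch curve $B$ and the descended fibration function $f$.

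However, the step everything rests on is not established. You obtain the eigenvalues $(1,-1)$ of $di_P$ only at smooth points of the fiber. You then make smoothness of the quotient conditional on $\Fix(i)$ being a curve near $P$. At the singular points of $F$, which are the whole content of the statement, this is exactly what must be proved. If $di_P=-\mathrm{id}$, the quotient would have an $A_1$ point, and no model $u^2=g(x,t)$ over a smooth $(x,t)$-plane could exist. (An ``isolated fixed point of reflection type'' is not a possibility.)

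The repair uses the ingredient you already named. The involution $i^*$ acts on the line bundle $\phi_*\omega_{S/\PP^1}$ by a constant, and this constant is $-1$ because $x\mapsto b-x$ negates the invariant differential of a smooth fiber. Hence $i^*\Omega=-\Omega$ for a local generator $\Omega$ of $\omega_S\cong\omega_{S/\PP^1}\otimes\phi^*\omega_{\PP^1}$ near $F$. So $\det di_P=-1$, and $di_P$ is a reflection at every fixed point. Incidentally, $\Fix(i)$ is a $4$-section, not a bi- or trisection; only its transversality at smooth fiber points is used.

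Two further steps are wrong as stated.

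First, $t$ cannot be kept as a coordinate on $S$ at a singular point of $F$, since $d\phi_P=0$ there. On the quotient it is a coordinate only in some cases. Your claim that $u^2=x^k$ is the only form of an $A_{k-1}$ singularity with $i\colon u\mapsto -u$ fails for $k=4$: the $\mathrm{III}^b$ tacnode $t(t+u^2)=0$ is $A_3$ with both branches preserved.

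The correct dividing line comes from writing $\phi=f(v,u^2)$ in linearizing coordinates. If $df(0)=0$, the quadratic part of $\phi$ is $av^2$. Then $\phi$ is either equivalent to $v'^2+cu^{2k}$ with $k\ge 2$ (both branches preserved when $k=2$), or has multiplicity at least $3$. Consequently:
a node, a cusp, or a tacnode with swapped branches forces $df(0)\neq 0$, so $t=f$ is a coordinate and $u^2=x^m-t$ with $m=(\Fix(i)\cdot F)_P\in\{2,3,4\}$; whereas $\mathrm{III}^b$ and $\mathrm{IV}$ force $df(0)=0$ with $\{f=0\}$ a node, so $f=xt$ after a coordinate change.

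Second, in these last two cases, once $f=xt$ is fixed the only remaining freedom is $(x,t)\mapsto(xe,t/e)$ with $e$ a unit, and this is what straightens $B$. Label the axes so that $B$ is transverse to $\{x=0\}$ and write $B=\{t=h(x)\}$. Take $e=(h(x)/x)^{1/2}$ for $\mathrm{III}^b$, where $h'(0)\neq 0$. Take $e=(h(x)/x^2)^{1/3}$ for $\mathrm{IV}$, where $\operatorname{ord}h=2$.

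Finally, absorb the unit in $u^2=\varepsilon g$ into $u$. This keeps $i\colon u\mapsto -u$ because $\varepsilon$ is $i$-invariant. With these repairs your outline becomes a complete proof.
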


\begin{Thm}[Kummer resolutions]\label{thm:kummer}
Assume that the fiber product $X$ has singular fibers only of the following types:

\noindent
$\mathrm{I}_{n}\times \mathrm{I}_{m}$, $\mathrm{I}_{n}\times \mathrm{III}$, $\mathrm{III}\times \mathrm{I}_{n}$, $\mathrm{I}_{n}\times \mathrm{IV}$, $\mathrm{IV}\times \mathrm{I}_{n}$, $\mathrm{II}\times \mathrm{II}$, $\mathrm{II}\times \mathrm{IV}$, $\mathrm{IV}\times \mathrm{II}$, $\mathrm{III}\times \mathrm{III}$, $\mathrm{III}^{b}\times \mathrm{IV}$, $\mathrm{IV}\times \mathrm{III}^{b}$, $\mathrm{IV}\times \mathrm{IV}$,
\noindent
then there exists a crepant resolution of singularities of $\quot{X}{i}$ which is a smooth
compact complex threefold with trivial canonical bundle, not necessarily
projective.  It is called Calabi--Yau here only in the projective cases
with the required cohomology vanishing.

Moreover, if $X$ has only fibers of the following types:

\noindent
$\mathrm{I}_{n}\times \mathrm{I}_{m}$, $n>1, m>1$ except $\mathrm{I}_{2}^{b}\times \mathrm{I}_{2k+1}$,
$\mathrm{I}_{2}^{b}\times \mathrm{I}_{2k}^{a}$,  $\mathrm{I}_{2k+1}\times \mathrm{I}_{2}^{b}$,
$\mathrm{I}_{2k}^{a}\times \mathrm{I}_{2}^{b}$.

\noindent
$\mathrm{I}_{n}\times \mathrm{III}$ and  $\mathrm{III}\times \mathrm{I}_{n}$, except
$\mathrm{I}_{2}^{b}\times \mathrm{III}^{a}$,  $\mathrm{III}^{a}\times \mathrm{I}_{2}^{b}$,
$\mathrm{I}_{2k}^{a}\times \mathrm{III}^{b}$,  $\mathrm{III}^{b}\times \mathrm{I}_{2k}^{a}$,
$\mathrm{I}_{2k+1}\times \mathrm{III}^{b}$,  $\mathrm{III}^{b}\times \mathrm{I}_{2k+1}$,

\noindent
$\mathrm{I}_{n}\times \mathrm{IV}$,   $\mathrm{IV}\times \mathrm{I}_{n}$, $n>1$,
$\mathrm{I}_{2}^{a}\times \mathrm{IV}$,   $\mathrm{IV}\times \mathrm{I}_{2}^{a}$,

\noindent
$\mathrm{III}^{b}\times \mathrm{III}^{b}$, $\mathrm{III}^{b}\times \mathrm{IV}$, $\mathrm{IV}\times \mathrm{III}^{b}$, $\mathrm{IV}\times \mathrm{IV}$,

\noindent
then  $\quot{X}{i}$
admits a projective crepant resolution of singularities.
\end{Thm}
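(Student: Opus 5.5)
The proof is local over the base, followed by a gluing step. Away from $\Sigma_1\cap\Sigma_2$ the fiber product $X$ is smooth. There the fixed locus of $i=i_1\times i_2$ is a union of smooth curves: over a smooth base point the fixed points of $i_k$ on each fiber form a finite set, namely the $2$-torsion translates fixed by $x\mapsto b_k-x$, which sweep out a multisection of degree four. Along these curves $i$ acts transversally as $\operatorname{diag}(-1,-1)$, so $X/i$ has transversal $A_1$ singularities there, and blowing up the image curve is crepant by Lemma~\ref{lem:crepant-blowup-check} with $r=2$, $m=1$. The content of the theorem is therefore what happens over a common singular fiber. For each pair of fiber types in the list we use the local normal forms of the preceding Corollary: in coordinates $(x,t,u)$ on $S_1$ and $(x',t',u')$ on $S_2$ the involution is $(u,u')\mapsto(-u,-u')$. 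The fiber product is obtained by equating the two fibration functions, e.g.\ $t$ or $xt$.

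First, for each of the twelve listed pairs and each fixed point $P$ of $i$ on $X$, I would write the germ of $X$ at $P$ as a $\ZZ/2$-invariant hypersurface. I would then compute the quotient germ in the invariant coordinates $U=u^2$, $V=uu'$, $W=u'^2$ (subject to $UW=V^2$), together with the remaining variables. For instance, for $\mathrm{I}_n\times\mathrm{I}_m$ at a fixed node one gets $x^2-U=y^2-W$ with $UW=V^2$. This gives an explicit quotient singularity, typically a hypersurface in $\CC^4$ after eliminating a variable, or a cone over a $\ZZ/2$-quotient of a node.

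Next I would resolve each quotient germ by a sequence of blow-ups of explicit ideals, exactly as in Section~\ref{sec:local}: blow-ups of smooth centers of multiplicity $r-1$, Weil divisors given by components of the image of a fiber, and small resolutions of the resulting ordinary double points. A convenient alternative order is to first resolve $X$ equivariantly using Section~\ref{sec:local}. Then one checks that $i$ lifts to the resolution and that its fixed locus there consists of curves with transversal action $\operatorname{diag}(-1,-1)$ plus isolated points. Finally one blows up the fixed curves, resolves the isolated $\frac12(1,1,1)$-type points or shows they do not occur, and takes the quotient. Crepancy follows from Lemma~\ref{lem:crepant-blowup-check} together with the fact that $i$ preserves the canonical form. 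Since the canonical bundle of the resolution of $X/i$ is the descent of the trivial $\omega_X$, this gives the first assertion, with analytic small resolutions allowed.

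For projectivity I would follow Corollary~\ref{cor:globalization} and \cite[Proposition~1.2]{KK}. Each small resolution must come from blowing up a global Weil divisor on $X/i$, namely the image of an $i$-stable component, or a pair of swapped components, of a singular fiber that is $\QQ$-Cartier away from the node. The listed exceptions are exactly the cases where the only available components are interchanged, or symmetric in the wrong way. Examples are $\mathrm{I}_2^b$, whose two components are each fixed but meet in two points that $i$ swaps, paired with $\mathrm{I}_{2k}^a$ or $\mathrm{I}_{2k+1}$; and $\mathrm{III}^b$ paired with $\mathrm{I}_{2k}^a$. In these cases no invariant divisor separates the two branches at the quotient node, and I would verify case by case that a suitable divisor does exist in all remaining pairs. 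The main obstacle is this last bookkeeping: for $\mathrm{IV}\times\mathrm{IV}$, $\mathrm{III}^b\times\mathrm{IV}$ and $\mathrm{II}\times\mathrm{IV}$ the quotient of an ordinary triple point, or of a non-terminal germ, by $i$ must be shown to admit a crepant resolution at all. There I would first try blowing up the maximal ideal of the quotient point and checking the charts by the Jacobian criterion.
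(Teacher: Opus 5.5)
\textbf{There is a genuine gap.} Your proposal is a template that defers exactly the local constructions that make up the theorem, and the one concrete suggestion you give for the hard cases fails.

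For $\mathrm{II}\times\mathrm{IV}$ and $\mathrm{III}^b\times\mathrm{IV}$ the route ``resolve $X$ equivariantly first'' is unavailable. The germs $x^2+y^3+z^3+t^3$ and $x^2+y^3+z^3+t^4$ are terminal, not non-terminal as you write. By Corollaries~\ref{cor:Lin} and~\ref{cor:2334} they admit no crepant resolution at all, so the resolution has to be built on $X/i$ itself. Blowing up the maximal ideal of the quotient point does not do this. For $\mathrm{II}\times\mathrm{IV}$ the quotient is the hypersurface $u^2=(x^3-yz)(y^2-z)$ in $\CC^4$, which is a double point, so Lemma~\ref{lem:crepant-blowup-check} gives discrepancy $4-1-2=1$.

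The missing idea is that the singularity of $X/i$ is not isolated. It has transversal $A_1$ singularities along the image $u=x^3-yz=y^2-z=0$ of the fixed curve, and one must blow up this curve rather than the point. The paper does so and obtains, in the relevant chart, $p_2^2y^2-p_2^2z-x^3+yz=0$. It then blows up the surface $x=y=p_2^2$ lying on it, which leaves only an $A_3$ point. The same device handles the other additive pairs:
\begin{itemize}
\item for $\mathrm{III}\times\mathrm{III}$ it leaves points of type $x^2+y^2+z^4+t^4$ (resolved in Section~\ref{sec:local}) or a transversal $A_1$ curve;
\item for $\mathrm{III}^b\times\mathrm{IV}$ it resolves the complete intersection $u^2=(x-t)(y^2-z)$, $xt=yz$ after blowing up double curves twice.
\end{itemize}
Conversely, $\mathrm{IV}\times\mathrm{IV}$, which you list as an obstacle, is easy. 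The blow-up of the ordinary triple point of $X$ is crepant ($r=4$, $m=3$), and it is automatically $i$-equivariant because its center is a fixed point.

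For the projective part you also lack a criterion for when small resolutions are compatible with $i$. Describing the exceptions as cases ``symmetric in the wrong way'' does not replace it. The paper's key local fact is that two fiber components $D',D''$ through a node induce the same small resolution there if and only if the node is an isolated point of $D'\cap D''$. Hence $i$ lifts to the successive blow-ups of $D_1,i(D_1),D_2,i(D_2),\dots$ as long as no $D_j\cap i(D_j)$ contains a curve through a node. For $D=L^1\times L^2$ this can only fail when one factor is $i_k$-stable and the other meets its image. The excluded pairs are those where every component through some node fails in this way: a node of $\mathrm{I}_2^b$ on two stable components, paired with a node of $\mathrm{I}_{2k+1}$ or $\mathrm{I}_{2k}^a$ on two swapped ones, and the analogous $\mathrm{III}$ cases via the four components through each $A_3$ point.

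Your alternative, blowing up the image of $D\cup i(D)$ in $X/i$, cannot detect this. At a node where $D$ and $i(D)$ share a curve, $D\cup i(D)$ is Cartier; on $xy=zt$ one has $\{x=z=0\}\cup\{x=t=0\}=\{x=0\}$. So that blow-up leaves the node untouched. For $\mathrm{I}_n\times\mathrm{IV}$ one moreover needs the explicit order $l_0\times m_0$, then $l_j\times m_1$ and $l_j'\times m_1'$, then $l_{k+1}\times m_0$.

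Two smaller slips:
\begin{itemize}
\item The crepant blow-up of a transversal $A_1$ curve has $r=3$, $m=2$, not $r=2$, $m=1$.
\item $\tfrac12(1,1,1)$ points admit no crepant resolution. They simply do not occur, since an involution preserving the volume form acts with eigenvalues $(-1,-1,1)$ at each fixed point.
\end{itemize}
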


\begin{proof}
The semistable part of the argument is the equivariant construction of
Kapustka--Kapustka \cite{KK}; we include the local verification because the
additive cases require additional models.  Whenever the construction
below gives an $i$-equivariant crepant resolution $\widetilde X$, the
quotient $\widetilde X/i$ has only the displayed Gorenstein quotient
singularities.  Resolving their transversal ADE strata and the explicitly
listed isolated germs gives a crepant resolution of $X/i$.  The cases in
which the fiber product itself has no suitable equivariant resolution are
handled directly on the quotient.

We now consider, case by case, the possible types of the singular fiber $\pi^{-1}(s)=(S_{1})_{s}\times (S_{2})_{s}$.

\vspace{2mm}
\medskip
\noindent\emph{$\mathrm{I}_n \times \mathrm{I}_{m}$}.
We shall follow the arguments from the proof of \cite[Lemma 6.1]{Schoen}.
The fiber product $X=S_{1}\times S_{2}$ has $nm$ singularities of type $A_{1}$ contained in the fiber $\pi^{-1}(s)$.
If $n,m>1$, then these singularities have a crepant resolution by a blow-up of components of the fiber. We shall consider a sequence $D_{1}$, $i(D_{1}),\dots,D_{k},i(D_{k})$, where $D_{i}$ is a component of $\pi^{-1}(s)$. The involution $i$ lifts to the successive blow-ups of strict transforms of $D_{1}, i(D_{1}), \dots$ when the blow-up of the strict transform of $D_{j}$ followed by the blow-up of the strict transform of $i(D_{j})$ is isomorphic to the blow-up of the strict transform of $i(D_{j})$ followed by the blow-up of the strict transform of $D_{j}$, $j=1,\dots,k$. This condition is satisfied when the blow-ups of $D_{j}$ and $i(D_{j})$ are isomorphic.

A node $P\in\pi^{-1}(S)$ has two small blow-ups. Two components $D'$ and $D''$ of the fiber $\pi^{-1}(s)$ define the same blow-up at $P\in D'\cap D''$
if and only if $P$ is an isolated point of $D'\cap D''$.

A component $D$ equals $L^{1}\times L^{2}$, the intersection
$D\cap i(D) = (L^{1}\cap i(L^{1})) \times (L^{2}\cap i(L^{2}))$ has dimension 1 when $L^{1}$ is invariant under $i_{1}$ and $L^{2}$ and $i_{2}(L_{2})$ intersect, or when $L^{2}$ is invariant under $i_{2}$ and $L^{1}$ and $i_{1}(L_{1})$ intersect. The only case when a singular point of a semistable fiber belongs to two fixed components is $\mathrm{I}_{2}^{b}$, while a singular point belonging to two components interchanged by the involution exists in fibers of type $\mathrm{I}_{2n+1}$ and $\mathrm{I}_{2n}^{a}$.

Consequently, if $n,m>1$, then the Kummer fibration admits a crepant resolution, except for the cases
\[
\mathrm{I}_{2}^{b}\times \mathrm{I}_{2k+1}, \quad \mathrm{I}_{2}^{b}\times \mathrm{I}_{2k}^{a}, \quad \mathrm{I}_{2k+1}\times \mathrm{I}_{2}^{b}, \quad \mathrm{I}_{2k}^{a}\times \mathrm{I}_{2}^{b}
\]
%
\vspace{2mm}
\medskip
\noindent\emph{$\mathrm{I}_n \times \mathrm{II}$}.
The quotient $\quot{X}{i}$ has a local equation
\[
u^{2}=(x^{3}-t)(y^{2}-t).
\]
Blowing up the singular curve gives an isolated terminal factorial
singularity of the same type as for the fiber product.  Corollary
\ref{cor:Lin} therefore excludes an analytic crepant
resolution.

\vspace{2mm}
\medskip
\noindent\emph{$\mathrm{I}_n \times \mathrm{III}$}.
A singular fiber of type $\mathrm{I}_{n}\times\mathrm{III}$ contains
$n$ singular points, each with local equation
\[
xy-z(z-t^{2})=0.
\]
These are singularities of type $A_{3}$.

Each such singular point lies on four irreducible components of the
fiber, and blowing up any one of these components resolves the
singularity. If a singular point is isolated in the intersection of two
such components, then blowing up either component produces isomorphic
local resolutions.

Consequently, the construction of a projective crepant resolution is
exactly the same as in the case of a fiber of type
$\mathrm{I}_{n}\times\mathrm{I}_{2}$. Such a resolution exists except
in the following cases:
\[
\mathrm{I}_{2}^{b}\times\mathrm{III}^{a},\qquad
\mathrm{I}_{2k+1}\times\mathrm{III}^{b},\qquad
\mathrm{I}_{2k}^{a}\times\mathrm{III}^{b}.
\]
Then the Kummer fibration also admits a projective crepant resolution.

\vspace{2mm}
\medskip
\noindent\emph{$\mathrm{I}_n \times \mathrm{IV}$}.
A singular fiber of that type contains $n$ singular points of type $D_{4}$;
it has local equation $xy-zt(z+t)$. A projective crepant resolution can be performed by successive blow-ups of two components of $\pi^{-1}(s)$ intersecting in one point. We shall prove that there exists a crepant resolution of this fiber unless $n=2$ and the involution acts on each component separately.

Denote by $m_{0}, m_{1},m_{1}'$ components of the fiber $\mathrm{IV}$, where $m_{0}$ is fixed by the involution and $m_{1}, m_{1}'$ exchanged. Similarly, denote the
components of the fiber $\mathrm{I}_{n}$ by
\begin{itemize}
	\item [$\mathrm{I}_{2k}^{a}$:] $l_{1}, l_{1}',\dots,l_{k}, l_{k}'$, $n=2k$, $l_{1}\cap l_{1}'\ne\emptyset$,  $l_{i}\cap l_{i+1}\ne\emptyset$,   $l_{i}'\cap l_{i+1}'\ne\emptyset$ for $i=1,\dots,k-1$,  $l_{k}\cap l_{k}'\ne\emptyset$
	\item[$\mathrm{I}_{2k+1}$:] $l_{0}, l_{1}, l_{1}',\dots,l_{k}, l_{k}'$, $n=2k+1$, $l_{0}\cap l_{1}\ne\emptyset$, $l_{0}\cap l_{1}'\ne\emptyset$,  $l_{i}\cap l_{i+1}\ne\emptyset$,   $l_{i}'\cap l_{i+1}'\ne\emptyset$ for $i=1,\dots,k-1$,  $l_{k}\cap l_{k}'\ne\emptyset$
	\item[$\mathrm{I}_{2k+2}^{b}$:] $l_{0}, l_{1}, l_{1}',\dots,l_{k}, l_{k}',l_{k+1}$, $n=2k+2$, $l_{0}\cap l_{1}\ne\emptyset$, $l_{0}\cap l_{1}'\ne\emptyset$,  $l_{i}\cap l_{i+1}\ne\emptyset$,   $l_{i}'\cap l_{i+1}'\ne\emptyset$ for $i=1,\dots,k-1$,  $l_{k}\cap l_{k+1}\ne\emptyset$,  $l_{k+1}\cap l_{k}'\ne\emptyset$
\end{itemize}
 where $l_{0}$ and $l_{k+1}$ are fixed by the involution and $l_{i}$ and $l_{i}'$, $i=1,\dots,k$, are exchanged by the involution.

 Then blowing up in the following order
 \begin{itemize}
 	\item $l_{0}\times m_{0}$, in cases (2), (3),
	\item $l_{i}\times m_{1}$ and $l_{i}'\times m_{1}'$ for
	$i=1,\ldots,k$, in all three cases,
 	\item  $l_{k+1}\times m_{0}$, in case (3),
 \end{itemize}
 yields a projective, crepant resolution of the fiber to which the involution lifts.
 Consequently, there exists a projective crepant resolution of the Kummer fibration except in the case of fiber $I_{2}^{b}$.

\vspace{2mm}
\medskip
\noindent\emph{$\mathrm{II} \times \mathrm{II}$}.
The local equation of the singularity of the fiber product is again given by $xy-zt(z+t)$.
The involution, which is given this time by $(x,y,z,t)\longmapsto (-x,-y,z,t)$, preserves the ideal $\langle xy,xz, yt, zt\rangle$ of the union of planes $x=t=0$ and $y=z=0$. Consequently, the blow-up of this ideal is a crepant resolution of the fiber product and the involution lifts to this crepant resolution.
Hence the Kummer fibration admits an analytic crepant resolution.

\vspace{2mm}
\medskip
\noindent\emph{$\mathrm{II} \times \mathrm{III}$}.
The fiber product has an isolated terminal factorial point of type $E_6$,
so Corollary~\ref{cor:Lin} excludes an analytic
crepant resolution. We have two possible equations for the quotient $\quot{X}{i}$:
	\[
u^{2} = (x^{3}-t)(y^{4}-t)\qquad\text{and}\qquad u^{2}=(x^{3}-yz)(y-z).
\]
Blowing up the singular curve produces the same terminal factorial
$E_6$ germ in both cases.  Thus the quotient has no analytic crepant
resolution by Corollary~\ref{cor:Lin}.

\vspace{2mm}
\medskip
\noindent\emph{$\mathrm{II} \times \mathrm{IV}$}.
The factorial obstruction excludes a projective crepant resolution of the
fiber product.  The quotient $\quot{X}{i}$ has the local equation $u^{2}-(x^{3}-yz)(y^{2}-z)$. The blow-up at the singular curve has (in the affine chart $p_{1}=1$) equation \[
p_{2}^{2}y^{2}-p_{2}^{2}z-x^{3}+yz = \left(\mathit{p_{2}}^{4}+\mathit{p_{2}}^{2} y +z \right) \left(-\mathit{p_{2}}^{2}+y \right)+\left(\mathit{p_{2}}^{2}-x \right) \left(\mathit{p_{2}}^{4}+\mathit{p_{2}}^{2} x +x^{2}\right)
\]
(type $J_{2,0}$).

Blowing up the surface $x-p_{2}^{2}=y-p_{2}^{2}=0$, we get an $A_{3}$-singularity with equation
\[
p_{2}^{4} q_{1}^{2}-2 p_{2}^{4} q_{1} -p_{2}^{2} q_{1}^{2} x +p_{2}^{4}+p_{2}^{2} x -q_{1} z +x^{2}.
\] Consequently, there exists an analytic crepant resolution of singularities.

\vspace{2mm}
\medskip
\noindent\emph{$\mathrm{III} \times \mathrm{III}$}.
The fiber product has an analytic (non-projective) crepant resolution, and the quotient $\quot{X}{i}$ has a local equation of one of the following three forms:
\def\labelwidth{5cm}
\begin{itemize}\def\itemindent{15mm}
	\item [$\mathrm{III}^{a}\times \mathrm{III}^{a}$:] $u^{2}-(x^{4}-t)(y^{4}-t)=0$,
	\item [$\mathrm{III}^{a}\times \mathrm{III}^{b}$:] $u^2 - (x^4-yt)(y-t)=0$,
	\item [$\mathrm{III}^{b}\times \mathrm{III}^{b}$:] $u^2-(x-t)(y-z) = xt-yz=0.$
\end{itemize}
The blow-up of the quotient $\quot{X}{i}$ at the singular curve has the following singularities:
\begin{itemize}
	\item two singular points locally analytically isomorphic to the singularity $x^{2}+y^{2}+z^{4}+t^{4}=0$ (type $X_{9}$) in the first case,
	\item one singular point of type $X_{9}$ in the second case,
	\item a transversal $A_{1}$-singularity in the third case.
\end{itemize}
In all cases there exists a crepant resolution of singularities; in the third case it is projective.
\vspace{2mm}

\medskip
\noindent\emph{$\mathrm{III} \times \mathrm{IV}$}.

There is no analytic crepant resolution for the fiber product,
whereas the quotient $\quot{X}{i}$
has a local equation of one of the following two forms:
\def\labelwidth{5cm}
\begin{itemize}\def\itemindent{15mm}
	\item [$\mathrm{III}^{a}\times \mathrm{IV}^{}$:] $u^{2}-(x^{4}-yt)(y^{2}-t)$,
	\item [$\mathrm{III}^{b}\times \mathrm{IV}^{}$:] $u^{2}-(x-t)(y^{2}-z) = xt-yz =0$.
\end{itemize}
In the first case, blowing up the singular curve produces a singularity
analytically isomorphic to
\[
x^{2}+y^{2}+z^{4}+t^{6}=0,
\]
which is of type $W_{1,0}$. After an analytic change of coordinates, its
equation can be written as
\[
x(x+y^{2})-z(z+t^{3})=0.
\]
Blowing up the Weil divisor
\[
D=\{x=z=0\}
\]
gives, in the affine chart $p_{0}=1$, the strict transform
\[
(1-p_{1}^{2})x+y^{2}-p_{1}t^{3}=0.
\]
It has two singular points,
\[
x=y=t=0,\qquad p_{1}=\pm1,
\]
both of type $A_{2}$.

Suppose that
\[
Z\longrightarrow \quot{X}{i}
\]
were a crepant resolution. Then there would be a crepant birational map,
over $\quot{X}{i}$, between $Z$ and the partial resolution constructed
above. By \cite[Cor.~4.11]{Kollar}, this map factors as a sequence of
flops. Since the flops occurring in this situation preserve the local
analytic types of the singularities, the partial resolution would have
to be smooth. This contradicts the presence of the two $A_{2}$
singularities described above.

In the second case, blowing up curves of multiplicity 2 twice gives a projective crepant resolution.

\vspace{2mm}
\medskip
\noindent\emph{$\mathrm{IV} \times \mathrm{IV}$}.
The fiber product has an ordinary triple point, this singularity is resolved by a (big) blow-up of the singular point. The action of the involution lifts to the resolution of the fiber product  and the quotient $\quot{X}{i}$ has a projective, crepant resolution of singularities.
\end{proof}

\begin{Rem}
	The results of this section remain true for elliptic surfaces defined over any perfect field of characteristic $p\ne2,3$.
\end{Rem}

\section{Hodge numbers of a Kummer fibration Calabi--Yau threefold}

Kapustka and Kapustka gave a method for computing Hodge numbers by realizing $S_{1}$ and $S_{2}$ as double quartics (see \cite{KK}). We now present a different approach, better suited to explicit computations.

Our approach is based on the monodromy description of the curves in the fiber product fixed by the involution. Since the computation uses orbifold Chen-Ruan cohomology (see \cite{CR}), the method does not require an explicit description of the quotient of the fiber product by the involution, nor of a crepant resolution of that quotient. It applies, however, only when the fiber product $X$ admits a crepant resolution $\tylda{X}$ and the involution lifts to an involution of $\tylda{X}$, which by abuse of notation we again denote by $i$. Direct computations based on \cite[Lemma 2.2]{KK} show that this condition is satisfied for fiber products with singular fibers of types \textup{$F\times \mathrm{I}_{0},$} \textup{$\mathrm{I}_{n}\times \mathrm{I}_m,$} \textup{$\mathrm{III}\times \mathrm{I}_n,$} \textup{$\mathrm{III}\times \mathrm{III},$} \textup{$\mathrm{IV}\times \mathrm{I}_n,$} \textup{$\mathrm{II}\times \mathrm{II},$} and \textup{$\mathrm{IV}\times \mathrm{IV}$}.

In the special case of an involution, orbifold cohomology and the strong
McKay correspondence \cite{CR,BatyrevDais} yield the following formula for
the Hodge numbers of any projective crepant resolution $Y$ of
$\quot{X}{i}$:
\[
H^{1,2}(Y)=H^{1,2}(\tylda{X})^{\langle i\rangle}+\bigoplus_{C\in  \Lambda(\Fix(i))}H^{0,1}(C),
\]
where $\Lambda(\Fix(i))$ denotes the set of connected components of the fixed locus of $i$. All components of $\Fix(i)$ contained in fibers are smooth rational curves; this is immediate in the case of a small resolution. For the fiber \textup{$\mathrm{IV}\times \mathrm{IV}$}, which is the only case requiring a non-small crepant resolution, the local analytic equation of the singular fiber product is $x(x^2-y^2)-z(z^2-t^2)=0$, and the corresponding involution is given by $i\colon (x,y,z,t)\longmapsto (x, -y, z, -t)$. An explicit computation in local analytic coordinates shows that $\Fix(i)$ is a disjoint union of four curves.

Therefore
\[
h^{1,2}(Y)=
\dim H^{1,2}\left(\widetilde X\right)^{\langle i\rangle}+\sum_{C\in  \Lambda(\tylda{C_{1}\times_{\PP^1} C_{2}})}g(C) + g(C_{0}),
\]
where
$C_{1}:=\Fix(i_{1}),$ $C_{2}:=\Fix(i_{2})$, and $\Lambda(\tylda{C_{1}\times_{\PP^{1}} C_{2}})$ denotes the set of connected components of the normalization $\tylda{C_{1}\times_{\PP^{1}} C_{2}}$ of the fiber product $C_{1}\times_{\PP^{1}} C_{2}$ and $C_{0}$ is the unique fixed curve of the lifted involution on the exceptional cubic surface.

Since $C_{0}$ is a projective line, $g(C_{0}) = 0$.
By the Hurwitz formula,
\begin{equation}\label{hurhur}
\sum_{C\in  \Lambda(\tylda{C_{1}\times_{\PP^1} C_{2}})}\left(2-2g(C)\right)=2\cdot 16-\sum_{P\in \tylda{C_1\times_{\PP^{1}} C_{2}}}(e_P-1),
\end{equation}
where $e_P$ denotes the ramification index at $P$. Therefore, in order to determine $\displaystyle \sum_{C\in  \Lambda(\tylda{C_{1}\times_{\PP^1} C_{2}})}g(C)$, it suffices to compute the following quantities:
\begin{itemize}
	\item the number of components of $\tylda{C_{1}\times_{\PP^1} C_{2}},$
	\item the ramification indices of points $P\in \tylda{C_1\times_{\PP^1} C_{2}}.$
\end{itemize}

\subsection{Monodromy approach}

The curves $C_1$ and $C_2$ can be described in terms of their monodromy representations:
\begin{Thm}[\cite{mirandacurves}] Let $B=\{b_{1}, b_{2}, \ldots, b_{n}\}\subset \PP^{1}$ be a finite set of points. Then there is a $1$-$1$ correspondence
\[
\left\{\begin{array}{lr}
\textup{Isomorphism classes of holomorphic}\\
\textup{maps $F\colon X\to \PP^{1}$ of degree $d$}\\
\textup{whose branch points lie in $B$}
\end{array}\right\}\leftrightarrow
\left\{\begin{array}{lr}
\textup{Conjugacy classes of $n$-tuples $(\sigma_{1}, \ldots, \sigma_{n})$}\\
\textup{of permutations in $\Sigma_{d}$ such that $\sigma_{1}\ldots\sigma_{n}=1$}\\
\textup{and the subgroup generated by the $\sigma_{i}$'s}\\
\textup{is transitive}
\end{array}\right\}.
\]
\end{Thm}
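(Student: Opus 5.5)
This is the Riemann existence theorem in its combinatorial form, and I would prove it in three steps. First, restrict any holomorphic map to the complement of the branch set. Second, use the classification of topological covering spaces of $U=\PP^{1}\setminus B$ by permutation representations of its fundamental group. Third, show that every finite topological cover of $U$ compactifies uniquely to a compact Riemann surface with a holomorphic map to $\PP^{1}$.

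Choose a base point $b_0\in U$ and loops $\gamma_1,\dots,\gamma_n$, each going once around $b_j$, so that
\[
\pi_1(U,b_0)=\langle\gamma_1,\dots,\gamma_n\mid \gamma_1\cdots\gamma_n=1\rangle .
\]
Given $F\colon X\to\PP^{1}$ of degree $d$ with branch points in $B$, the restriction $F^{-1}(U)\to U$ is an unramified topological covering of degree $d$. Labelling the fiber $F^{-1}(b_0)$ by $\{1,\dots,d\}$, path lifting gives a homomorphism $\pi_1(U,b_0)\to\Sigma_d$. Set $\sigma_j$ to be the image of $\gamma_j$; then $\sigma_1\cdots\sigma_n=1$.

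The image subgroup is transitive because $X$ is connected. Removing finitely many points from a connected Riemann surface leaves it connected, and a connected cover has transitive monodromy. Changing the labelling conjugates the tuple simultaneously. An isomorphism of maps over $\PP^{1}$ induces a bijection of fibers compatible with path lifting. So the construction gives a well-defined map from isomorphism classes of maps to conjugacy classes of tuples.

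For the inverse direction, I would start from a transitive tuple with product $1$. It defines a homomorphism from $\pi_1(U)$ to $\Sigma_d$, and covering space theory gives a connected degree-$d$ covering $V\to U$, unique up to isomorphism. Pulling back the complex structure of $U$ makes $V$ a Riemann surface and the covering map holomorphic.

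To compactify, I would take a small punctured disc $\Delta^*$ around each $b_j$. Its preimage in $V$ is a disjoint union of connected finite covers of $\Delta^*$, one for each cycle of $\sigma_j$. A connected degree-$e$ cover of $\Delta^*$ is isomorphic to $z\mapsto z^{e}$ on a punctured disc. Filling in one point for each cycle therefore yields a compact Riemann surface $X$ and a holomorphic map $F\colon X\to\PP^{1}$ extending the cover. Its ramification indices over $b_j$ are the cycle lengths of $\sigma_j$.

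Finally, the two constructions are mutually inverse. Going from tuple to map and back clearly returns the tuple up to conjugation. Going from map to tuple and back, any $F$ is recovered from its restriction over $U$, by the uniqueness of the extension over punctures.

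I expect the compactification step to be the main obstacle, with two parts. One must check that the filled-in space is Hausdorff and carries a complex structure compatible with the one on $V$. One must also show that isomorphisms of covers over $U$ extend holomorphically over the added points, by the Riemann removable singularity theorem applied to bounded local functions. Covering theory reduces everything else to bookkeeping.
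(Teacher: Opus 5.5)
Your proposal is correct. It is the standard proof of this combinatorial form of the Riemann existence theorem: restrict to $U=\PP^{1}\setminus B$, classify the unramified part by covering space theory, and fill in one point for each cycle of $\sigma_j$ using the local model $z\mapsto z^{e}$. This is the argument behind the result in Miranda's book, which the paper cites without giving a proof of its own.

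One point needs to be made precise: the multiplication convention in $\Sigma_d$. Path lifting naturally gives a right action of $\pi_1(U,b_0)$ on the fiber. You should therefore either compose permutations left to right or order the loops accordingly, so that the relation $\gamma_1\cdots\gamma_n=1$ becomes $\sigma_1\cdots\sigma_n=1$ rather than its reverse.
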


Forming the product representation $\sigma_1=\sigma_1^{(1)}\times \sigma_1^{(2)}, \ldots, \sigma_n=\sigma_n^{(1)}\times \sigma_n^{(2)}$, we obtain the monodromy representation of $\tylda{C_1\times_{\PP^{1}} C_2}$. This allows us to determine the connected components of $\tylda{C_1\times_{\PP^{1}} C_2}$, their genera, and hence the total sum of genera.

\subsection{Ramification index of points of the normalized fiber product}

If $i_1$ and $i_2$ are involutions, then the curves $C_1$ and $C_2$ are fourfold covers of $\PP^1$. If, in addition, they are reducible, which in particular happens for a standard Kummer fibration, then the sum of genera of the irreducible components of $\tylda{C_1\times_{\PP^{1}} C_2}$ can also be computed from the behavior of the involution on the singular fibers.

We now give a more direct description of the components of $\Fix(i)$ in the case where $i_1$ and $i_2$ are non-symplectic involutions. In this situation both curves $C_{1}$ and $C_{2}$, possibly reducible, define $4:1$ coverings of $\PP^{1}$.
\begin{displaymath}
	\begin{tikzcd}[column sep=35pt, row sep=35pt]
		C_{1} \ar[dr,"4:1"'] & & C_{2} \ar[dl,"4:1"] \\
		& \PP^{1} &
	\end{tikzcd}
\end{displaymath}

Let us compute the number of irreducible components of $C_{1}\times_{\PP^{1}} C_{2}$. Assume that $D_{1}$ and $D_{2}$ are irreducible components of $C_{1}$ and $C_{2}$, respectively. Then for each irreducible connected component $C\subseteq D_{1}\times_{\PP^{1}} D_{2}$ we have the following diagram:
\begin{displaymath}
	\begin{tikzcd}[column sep=35pt, row sep=35pt]
		& C \ar[dr,""'] \ar[dl,""]\\
		D_{1} \ar[dr,""'] & & D_{2} \ar[dl,""] \\
		& \PP^{1} &
	\end{tikzcd}
\end{displaymath}
where $D_{k}\to \PP^{1}$ is a covering of degree at most $4$ for $k=1,2$.

\begin{Thm}[Orbit criterion for a normalized fiber product]\label{fund}
Let $D_i\to\PP^1$ be connected covers of degrees at most $4$.  The
connected components of the normalization of
$D_1\times_{\PP^1}D_2$ are the orbits of the product monodromy on the
product of the two geometric fibers.  In particular, the following
low-degree patterns occur.  The degree-four list includes the patterns
needed below; monodromy, rather than ramification indices alone, decides
which pattern occurs.
\begin{enumerate}\leftskip=-5mm
\item[\textup{$(1)$}]\hypertarget{11} If $D_{1}\to \PP^{1}$, $D_{2}\to \PP^{1}$ are $a:1$ and $b:1$ coverings, where $\gcd(a,b)=1$
\begin{displaymath}
	\begin{tikzcd}[column sep=35pt, row sep=35pt]
		& C \ar[dr,"a:1"] \ar[dl,"b:1"']\\
		D_{1} \ar[dr,"a:1"'] & & D_{2} \ar[dl,"b:1"] \\
		& \PP^{1} &
	\end{tikzcd}
	\hspace{20mm}
	\begin{tikzcd}[column sep=35pt, row sep=35pt]
		& C \ar[dr,"b:1"] \ar[dl,"a:1"']\\
		D_{1} \ar[dr,"b:1"'] & & D_{2} \ar[dl,"a:1"] \\
		& \PP^{1} &
	\end{tikzcd}
\end{displaymath}
then $C\simeq \operatorname{Norm}(D_{1}\times_{\PP^{1}} D_{2})$ and hence we get exactly \textbf{one} component.

\item[\textup{$(2)$}]\hypertarget{22} If $D_{1}\to \PP^{1}$, $D_{2}\to \PP^{1}$ are $2:1$ coverings, we have two possible diagrams
\begin{displaymath}
	\begin{tikzcd}[column sep=35pt, row sep=35pt]
		& C \ar[dr,"1:1"] \ar[dd,"2:1",dashed] \ar[dl,"1:1"']\\
		D_{1} \ar[dr,"2:1"'] & & D_{2} \ar[dl,"2:1"] \\
		& \PP^{1} &
	\end{tikzcd}
\hspace{20mm}
\begin{tikzcd}[column sep=35pt, row sep=35pt]
	& C \ar[dr,"2:1"] \ar[dd,"4:1",dashed] \ar[dl,"2:1"']\\
	D_{1} \ar[dr,"2:1"'] & & D_{2} \ar[dl,"2:1"] \\
	& \PP^{1} &
\end{tikzcd}
\end{displaymath}
then one of the following possibilities holds:
\begin{itemize}
	\item[\textup{$(a)$}] $D_{1}\times_{\PP^{1}} D_{2}$ has \textbf{two} components, each corresponding to the first diagram; in this case $D_{1}\simeq D_{2}$ as covers of $\PP^{1}$,
	\item[\textup{$(b)$}] $D_{1}\times_{\PP^{1}} D_{2}$ is \textbf{irreducible}.
\end{itemize}

\item[\textup{$(3)$}]\hypertarget{33} If $D_{1}\to \PP^{1}$, $D_{2}\to \PP^{1}$ are $3:1$ coverings, we have three possible diagrams
\begin{displaymath}
	\begin{tikzcd}[column sep=35pt, row sep=35pt]
		& C \ar[dr,"1:1"] \ar[dd,"3:1",dashed] \ar[dl,"1:1"']\\
		D_{1} \ar[dr,"3:1"'] & & D_{2} \ar[dl,"3:1"] \\
		& \PP^{1} &
	\end{tikzcd}
	\hspace{7mm}
	\begin{tikzcd}[column sep=35pt, row sep=35pt]
		& C \ar[dr,"2:1"] \ar[dd,"6:1",dashed] \ar[dl,"2:1"']\\
		D_{1} \ar[dr,"3:1"'] & & D_{2} \ar[dl,"3:1"] \\
		& \PP^{1} &
	\end{tikzcd}
\hspace{7mm}
\begin{tikzcd}[column sep=35pt, row sep=35pt]
	& C \ar[dr,"3:1"] \ar[dd,"9:1",dashed] \ar[dl,"3:1"']\\
	D_{1} \ar[dr,"3:1"'] & & D_{2} \ar[dl,"3:1"] \\
	& \PP^{1} &
\end{tikzcd}
\end{displaymath}
then one of the following possibilities holds:
\begin{itemize}
	\item[\textup{$(a)$}] $D_{1}\times_{\PP^{1}} D_{2}$ has \textbf{three} components, each of them corresponds to the first diagram; in this case $D_{1}\simeq D_{2}$ and the covering $D_{1}\to \PP^{1}$ is cyclic,
	\item[\textup{$(b)$}] $D_{1}\times_{\PP^{1}} D_{2}$ has \textbf{two} components, one corresponding to the first diagram and the other to the second; in this case $D_{1}\simeq D_{2}$ but the covering $D_{1}\to \PP^{1}$ is not cyclic,
	\item[\textup{$(c)$}] $D_{1}\times_{\PP^{1}} D_{2}$ is \textbf{irreducible}.
\end{itemize}

\item[\textup{$(4)$}]\hypertarget{44} If $D_{1}\to \PP^{1}$ and $D_{2}\to \PP^{1}$ are $2:1$ and $4:1$ coverings, respectively, we have two possible diagrams (and two symmetric ones)
\begin{displaymath}
	\begin{tikzcd}[column sep=35pt, row sep=35pt]
		& C \ar[dr,"1:1"] \ar[dd,"4:1",dashed] \ar[dl,"2:1"']\\
		D_{1} \ar[dr,"2:1"'] & & D_{2} \ar[dl,"4:1"] \\
		& \PP^{1} &
	\end{tikzcd}
	\hspace{20mm}
	\begin{tikzcd}[column sep=35pt, row sep=35pt]
		& C \ar[dr,"2:1"] \ar[dd,"8:1",dashed] \ar[dl,"4:1"']\\
		D_{1} \ar[dr,"2:1"'] & & D_{2} \ar[dl,"4:1"] \\
		& \PP^{1} &
	\end{tikzcd}
\end{displaymath}
then one of the following possibilities holds:
\begin{itemize}
	\item[\textup{$(a)$}] $D_{1}\times_{\PP^{1}} D_{2}$ has \textbf{two} components, each of them corresponds to the first diagram and $\pi_{2}\colon D_{2}\to \PP^{1}$ factors through $\pi_{1}\colon D_{1}\to \PP^{1}$,
	\item[\textup{$(b)$}] $D_{1}\times_{\PP^{1}} D_{2}$ is \textbf{irreducible}.
\end{itemize}

\item[\textup{$(5)$}]\hypertarget{55} If $D_{1}\to \PP^{1}$ and $D_{2}\to \PP^{1}$ are $4:1$ coverings, we have three possible diagrams
\begin{displaymath}
	\begin{tikzcd}[column sep=35pt, row sep=35pt]
		& C \ar[dr,"1:1"] \ar[dd,"4:1",dashed] \ar[dl,"1:1"']\\
		D_{1} \ar[dr,"4:1"'] & & D_{2} \ar[dl,"4:1"] \\
		& \PP^{1} &
	\end{tikzcd}
\hspace{7mm}
	\begin{tikzcd}[column sep=35pt, row sep=35pt]
		& C \ar[dr,"2:1"] \ar[dd,"8:1",dashed] \ar[dl,"2:1"']\\
		D_{1} \ar[dr,"4:1"'] & & D_{2} \ar[dl,"4:1"] \\
		& \PP^{1} &
	\end{tikzcd}
\hspace{7mm}
\begin{tikzcd}[column sep=35pt, row sep=35pt]
	& C \ar[dr,"4:1"] \ar[dd,"16:1",dashed] \ar[dl,"4:1"']\\
	D_{1} \ar[dr,"4:1"'] & & D_{2} \ar[dl,"4:1"] \\
	& \PP^{1} &
\end{tikzcd}
\end{displaymath}
then one of the following possibilities holds:
\begin{itemize}
	\item[\textup{$(a)$}] $D_{1}\times_{\PP^{1}} D_{2}$ has \textbf{four} components; each of them corresponds to the first diagram,
	\item[\textup{$(b)$}] $D_{1}\times_{\PP^{1}} D_{2}$ has \textbf{three} components; one of them corresponds to the second diagram and the latter two to the first diagram,
	\item[\textup{$(c)$}] $D_{1}\times_{\PP^{1}} D_{2}$ has \textbf{two} irreducible components; each of them corresponds to the second diagram,
	\item[\textup{$(d)$}] $D_{1}\times_{\PP^{1}} D_{2}$ is \textbf{irreducible}.
	\item[\textup{$(e)$}] if $D_1\simeq D_2$ and the degree-four
	monodromy is two-transitive, the diagonal is one component of degree
	$4$ and its complement is one component of degree $12$.
\end{itemize}

\end{enumerate}

In cases \hyperlink{44}{$(4)$} and \hyperlink{55}{$(5)$} we can state only necessary conditions in terms of ramification indices.

Finally, note that a pair of points lying over the same $b\in \PP^{1}$, with ramification indices $e_{1}$ and $e_{2}$ respectively, gives rise to $d$ points in $\tyldaa{D_{1}\times_{\PP^{1}}D_{2}}$ with ramification indices $\dfrac{e_{1}e_{2}}{d}$, where $d=\gcd(e_{1},e_2)$.
\end{Thm}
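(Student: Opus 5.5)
The plan is to reduce everything to the Galois correspondence for unramified covers of a punctured sphere. Let $B\subset\PP^1$ be the union of the branch loci of $D_1$ and $D_2$, fix a base point $b_0\notin B$, and let $\rho_k\colon\pi_1(\PP^1\setminus B,b_0)\to\Sigma_{d_k}$ be the monodromy representations. These are attached to $D_k\to\PP^1$ by the theorem of \cite{mirandacurves} quoted above, and they are transitive because the $D_k$ are connected. Over $\PP^1\setminus B$ the fiber product $D_1\times_{\PP^1}D_2$ is an unramified cover of degree $d_1d_2$. Its fiber over $b_0$ is $F_1\times F_2$, and the monodromy acts diagonally by $\rho_1\times\rho_2$. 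Connected components of an unramified cover correspond to orbits of the monodromy on the fiber.

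The normalization of $D_1\times_{\PP^1}D_2$ is the unique smooth compactification of this open cover, that is, the normalization of $\PP^1$ in its function field. It is therefore a disjoint union of smooth curves indexed by the same orbits. This proves the main assertion. The local statement at the end follows by restricting to a small punctured disc around $b$. Over that disc, points with ramification indices $e_1$ and $e_2$ correspond to cycles of lengths $e_1$ and $e_2$ of the local monodromy $\gamma$. The product of two cycles $\ZZ/e_1\times\ZZ/e_2$, acted on by $(1,1)$, decomposes into $\gcd(e_1,e_2)$ orbits of length $\operatorname{lcm}(e_1,e_2)=e_1e_2/d$. Hence we get $d$ points with ramification index $e_1e_2/d$.

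The listed patterns follow by counting orbits of a transitive group $G\subset\Sigma_{d_1}\times\Sigma_{d_2}$ on $F_1\times F_2$.

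\begin{itemize}
\item For $\gcd(a,b)=1$, every orbit has size divisible by both $a$ and $b$. This is because it surjects equivariantly onto each transitive factor with fibers of constant size. So an orbit has size at least $ab$, and there is one orbit.
\item In general, the orbits of $G$ on $F_1\times F_2$ are in bijection with the double cosets $H_1\backslash G/H_2$, where $H_k$ are point stabilizers. An orbit containing $(x,y)$ has size $d_1\cdot[H_1:H_1\cap H_2^{g}]$. The resulting map $C\to D_1$ has degree equal to this index, and $C\to D_2$ similarly. This yields the diagrams with degrees $1$, $2$, $3$, or $4$ over each factor, since orbit sizes must sum to $d_1d_2$ and be divisible by $\operatorname{lcm}(d_1,d_2)$.
\item For $(2,2)$ the sizes are $\{2,2\}$ or $\{4\}$. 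An orbit of size $2$ is the graph of an isomorphism $D_1\simeq D_2$ over $\PP^1$.
\item For $(3,3)$ the sizes are $\{3,3,3\}$, $\{3,6\}$ or $\{9\}$. Three graph components force $H_1=H_1^{g}$ for all $g$, so $H_1$ is normal and the cover is cyclic. An isomorphism without normality leaves the complement of the graph as a single orbit of size $6$ ($G=\Sigma_3$ is doubly transitive).
\item For $(2,4)$, an orbit of size $4$ defines a map $D_2\to D_1$ over $\PP^1$, which gives the factorization in (a).
\item For $(4,4)$, enumerate partitions of $16$ into parts of size $4$, $8$, or $16$. When $D_1\simeq D_2$ and $G$ is $2$-transitive, we get case (e).
\end{itemize}

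The main obstacle is the degree-four bookkeeping. Here one must check that each partition is realized only with the stated component degrees, and be honest that the listed cases (a)--(e) are patterns rather than an exhaustive classification. For instance, $4+12$ in case (e) is a distinct pattern that does not fit the "degree $\le 2$ over each factor" diagrams. I would handle this by stating explicitly that orbit counting decides everything, and that ramification data only give necessary conditions. This matches the caveat in the statement.
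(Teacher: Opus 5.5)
Your proposal is correct and follows the paper's proof. Both arguments reduce to the product monodromy $\rho_1\times\rho_2$ on $\Omega_1\times\Omega_2$ over $\PP^1\setminus B$, identify the components of the normalization with the orbits (degree $|O|/d_i$ over $D_i$), and then inspect transitive groups of degree at most $4$. The only real difference is in the ramification rule: you obtain the $\gcd/\operatorname{lcm}$ formula from the orbits of a product of two local monodromy cycles, whereas the paper factors $\CC[[u,v]]/(u^{e_1}-v^{e_2})$ into $\gcd(e_1,e_2)$ branches.

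There is one small slip. For $(4,4)$ the admissible orbit sizes are $4,8,12,16$, not just $4,8,16$. With $12$ included, the five partitions $4+4+4+4$, $8+4+4$, $8+8$, $16$, $12+4$ correspond exactly to cases (a), (b), (c), (d), (e). So the degree-four list is in fact exhaustive, not merely a list of patterns.
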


\begin{proof}
Let $B$ contain all branch values and put $U=\PP^1\setminus B$.  Choose a
base point of $U$ and write $\Omega_i$ for the geometric fiber of $D_i$.
The restriction $D_i|_U$ is the finite étale cover determined by a
transitive representation
\[
\rho_i\colon\pi_1(U)\longrightarrow\operatorname{Sym}(\Omega_i).
\]
The fiber product over $U$ is therefore the cover with fiber
$\Omega_1\times\Omega_2$ and monodromy
\[
\gamma\cdot(x_1,x_2)=\bigl(\rho_1(\gamma)x_1,
\rho_2(\gamma)x_2\bigr).
\]
Connected finite étale covers correspond to transitive monodromy sets.
Consequently, the connected components of the normalization of the fiber
product are exactly the orbits $O\subset\Omega_1\times\Omega_2$.  The
degree of the corresponding component over $\PP^1$ is $|O|$, and its
degrees over $D_1$ and $D_2$ are respectively
$|O|/|\Omega_1|$ and $|O|/|\Omega_2|$.  This proves the asserted diagrams
by inspecting transitive permutation groups of degrees $2$, $3$, and $4$.
For instance, equal quadratic covers give the two graph orbits, whereas
distinct quadratic extensions have one orbit of size $4$; an equal
non-Galois cubic cover has the diagonal orbit of size $3$ and the
off-diagonal orbit of size $6$.  In degree $4$, a two-transitive group has
the diagonal orbit of size $4$ and the off-diagonal orbit of size $12$,
which explains case~(5e).  The factorization alternative in case~(4) is
equivalently the existence of a two-element quotient of the degree-four
monodromy set.

It remains to justify the ramification formula.  Complete at a pair of
points over the same branch value.  Up to units the two maps have local
forms $t=u^{e_1}$ and $t=v^{e_2}$, so the completed fiber product is
\[
\CC[[u,v]]/(u^{e_1}-v^{e_2}).
\]
Writing $g=\gcd(e_1,e_2)$ factors the equation into $g$ branches.  The
normalization of each branch is parametrized by
$u=s^{e_2/g}$ and $v=\zeta s^{e_1/g}$ and has ramification index
$\operatorname{lcm}(e_1,e_2)=e_1e_2/g$.  This is the formula stated at the
end of the theorem.
\end{proof}

We end this section with a demonstration of the above two approaches to the computation of the Hodge numbers of the Kummer fibration associated to the self-fiber product of the Beauville surface \textup{$\mathrm{I}_{3}\mathrm{I}_{3}\mathrm{I}_{3}\mathrm{I}_{3}$}.

\begin{Exm} Let $X=\tylda{\quot{(S\times_{\PP^{1}} S)}{i}}$ be a Kummer fibration associated to self-fiber product of Beauville surface \textup{$\mathrm{I}_{3}\mathrm{I}_{3}\mathrm{I}_{3}\mathrm{I}_{3}$}. $S$ is the rational elliptic surface $S\to \PP^{1}(t)$ given by the Weierstrass equation
\[
y^{2}=4x^3-3(8t^3+1)x-8t^6-20t^3+1.
\]
The discriminant of the right-hand side is
\[
432t^3\left(2t+1+\sqrt{-3}\right)^3(t-1)^3\left(-2t-1+\sqrt{-3}\right)^3.
\]
Since $4x^3-3(8t^3+1)x-8t^6-20t^3+1$ is irreducible, we obtain a $3$-section, and this covering is not cyclic, its self-fiber product has $5$ irreducible components: three arising from case \hyperlink{11}{$(1)$} and two from case \hyperlink{33}{$(3)$} of Theorem~\ref{fund}.

The ramification indices of the fixed points in $\mathrm{I}_{3}\times \mathrm{I}_{3}$ are
$1,1,1,2,2,2,2,2,2,$ hence the equation \eqref{hurhur} gives
\[
10-2\sum_{C\in  \Lambda(C_{1}\times_{\PP^1} C_{2})}g(C)=2\cdot 16-4\cdot \left(6(2-1)+4(1-1)\right)=8,
\]
and consequently $\displaystyle \sum_{C\in  \Lambda(C_{1}\times_{\PP^1} C_{2})}g(C)=1$.

We can describe the geometry of the fixed locus in more detail using a monodromy argument. The monodromy representation, computed with the function \texttt{monodromy} from the \texttt{MAPLE} package \texttt{algcurves}, is given in the following table.

\begin{table}[H]
	\begin{center}
		\begin{varwidth}{\textheight}\rowcolors{2}{gray!25}{white}
			\rowcolors{2}{gray!25}{white}
			\resizebox{0.3\textheight}{!}{
				\begin{tabular}{c||c||c||c}
					\hhline{~|~|~|~}
					\cellcolor{gray!30}\setlength{\fboxrule}{5pt}\fcolorbox{gray!30}{gray!30}{$0$} & \cellcolor{gray!30} \setlength{\fboxrule}{5pt}\fcolorbox{gray!30}{gray!30}{$1$} & \cellcolor{gray!30} \setlength{\fboxrule}{5pt}\fcolorbox{gray!30}{gray!30}{$\zeta_{3}$} & \cellcolor{gray!30} \setlength{\fboxrule}{5pt}\fcolorbox{gray!30}{gray!30}{$\zeta_{3}^2$} \\
					\hhline{=::=::=::=}
					 \setlength{\fboxrule}{5pt}\fcolorbox{white}{white}{$(2,3)$} & \setlength{\fboxrule}{5pt}\fcolorbox{white}{white}{$(1,3)$} & \setlength{\fboxrule}{5pt}\fcolorbox{white}{white}{$(1,2)$} & \setlength{\fboxrule}{5pt}\fcolorbox{white}{white}{$(1,3)$} \\
				\end{tabular}
			}
		\end{varwidth}
		\captionof{table}{Monodromy representation of the Beauville surface $\mathrm{I}_{3}\mathrm{I}_{3}\mathrm{I}_{3}\mathrm{I}_{3}$}
	\end{center}
\end{table}

Therefore the induced monodromy representation in $\Sigma_{4}\times \Sigma_{4} \subset \Sigma_{16}$ is

\begin{table}[H]
	\begin{center}
		\begin{varwidth}{\textheight}\rowcolors{2}{gray!25}{white}
			\rowcolors{2}{gray!25}{white}
			\resizebox{0.35\textheight}{!}{
				\begin{tabular}{c||c}
					\hhline{~|~}
					\cellcolor{gray!30}\setlength{\fboxrule}{5pt}\fcolorbox{gray!30}{gray!30}{$0$} & \cellcolor{gray!30} \setlength{\fboxrule}{5pt}\fcolorbox{gray!30}{gray!30}{$(2,3)(5,9)(6,11)(7,10)(12,8)(14,15)$} \\
					\hhline{=::=}
					\setlength{\fboxrule}{5pt}\fcolorbox{white}{white}{$1$} & \setlength{\fboxrule}{5pt}\fcolorbox{white}{white}{$(1,11)(2,10)(3,9)(4,12)(5,7)(13,15)$} \\
					\hhline{=::=}
					\cellcolor{gray!30} \setlength{\fboxrule}{5pt}\fcolorbox{gray!30}{gray!30}{$\zeta_{3}$} & \cellcolor{gray!30} \setlength{\fboxrule}{5pt}\fcolorbox{gray!30}{gray!30}{$(1,6)(2,5)(3,7)(4,8)(9,10)(13,14)$} \\
					\hhline{=::=}
					\setlength{\fboxrule}{5pt}\fcolorbox{white}{white}{$\zeta_{3}^2$} & \setlength{\fboxrule}{5pt}\fcolorbox{white}{white}{$(1,11)(2,10)(3,9)(4,12)(5,7)(13,15)$} \\
				\end{tabular}
			}
		\end{varwidth}
	\end{center}
\end{table}

The group $G$ generated by the four permutations above acts non-transitively on $\{1,2,\ldots,16\}$. In fact,
\[
\{1,2,\ldots,16\}=\{1,6,11\}\cup\{2,3,5,7,9,10\}\cup \{4,8,12\} \cup \{13,14,15\}\cup \{16\}
\] are the orbits of the action.
The group $G$ preserves each of these five subsets and acts transitively on each of them. Consequently, $\tylda{C_{1}\times_{\PP^{1}} C_{2}}$ decomposes into $5$ connected components whose degrees over $\PP^{1}$ are $3,$ $6,$ $3,$ $3,$ and $1.$
\end{Exm}
Over the four branch points, $\tylda{C_{1}\times_{\PP^{1}} C_{2}}$ has six ramification points of index $2$. The numbers of ramification points on the five components are $4,$ $12,$ $4,$ $4,$ and $0,$ respectively. Hence the genera are $0,$ $1,$ $0,$ $0,$ and $0.$

\begin{Rem}
The advantage of this monodromy-based approach is that it requires only the Weierstrass equation. Because it uses Chen-Ruan cohomology, the method extends to a much wider range of examples.
\end{Rem}

\begin{Rem}
	We have discussed only the second summand
	\[\sum_{C\in  \Lambda(\tylda{C_{1}\times_{\PP^1} C_{2}})}g(C)+g(C_{0}) \]
	of the formula for $H^{1,2}(Y)$. For the second summand $H^{1,2}\left(\tilde X\right)^{\langle i \rangle}$ we use an isomorphism of $H^{1,2}\left(\tilde X\right)$ with the space $H^{1}(\mathcal X)$ of infinitesimal deformations of $X$. Consequently, the dimension of the space $H^{1,2}\left(\tilde X\right)^{\langle i \rangle}$ depends not only on the elliptic surfaces $S_{1}$ and $S_{2}$ but also the sections defining involutions. We shall however use this construction only in the examples with $h^{1,2}(\tilde X) = 0$.
\end{Rem}
\section{Applications}
\subsection{Rigid Calabi--Yau threefolds}
We record configurations of rational elliptic surfaces relevant to rigid
Calabi--Yau fiber products and Kummer fibrations at the primes
$p=5,7,23,61$.  The surfaces come from Herfurtner's list of rational
elliptic surfaces with four singular fibers \cite{Herfurtner}, followed by
a base automorphism as in \cite{Schuett}.  The tables establish the
geometric collision and monodromy data; a claim about the exact set of bad
primes additionally requires an integral model and is made only where
that arithmetic verification is cited.
To simplify notation, we shall only list types of singular fibers of both surfaces and their position.

\medskip
\subsubsection*{The prime $p=5$}
Consider the elliptic surface with singular fibers of types
$\mathrm{I_{5}}$, $\mathrm{I}_{5}$, $\mathrm{I}_{1}$, and $\mathrm{I}_{1}$.
\begin{table}[H]
	\centering
	\renewcommand{\arraystretch}{1.2}
	\begin{tabular}{lcccc}
		\toprule
		Singular fiber
		& $\mathrm{I_{5}}$
		& $\mathrm{I}_{5}$
		& $\mathrm{I}_{1}$ & $\mathrm{I}_{1}$ \\
		\midrule
		Point on the base & $t=\infty$ & $t=0$ & $t=\frac12(11+5\sqrt5)$ & $t=\frac12(11-5\sqrt5)$ \\
		\bottomrule
	\end{tabular}
\end{table}
This is the universal surface for the modular group $\Gamma_{1}(5)$.
A projective crepant resolution of its self-fiber product is a rigid Calabi-Yau threefold with bad prime 5.

\medskip
\subsubsection*{The prime $p=7$}
Consider the elliptic surface with singular fibers of types
$\mathrm{III}$, $\mathrm{I}_{7}$, $\mathrm{I}_{1}$, and $\mathrm{I}_{1}$.
\begin{table}[H]
	\centering
	\renewcommand{\arraystretch}{1.2}
	\begin{tabular}{lcccc}
		\toprule
		Singular fiber
		& $\mathrm{III}$
		& $\mathrm{I}_{7}$
		& $\mathrm{I}_{1}$ & $\mathrm{I}_{1}$ \\
				\midrule
		Local monodromies &  (2,3)& (1,2)& $(1,3)$ & (1,2)\\
		\midrule
		Point on the base & $t=\infty$ & $t=0$ & $t=-\frac{13}{64}-\frac{7 \sqrt{-7}}{64}
		$ & $t=-\frac{13}{64}-\frac{7 \sqrt{-7}}{64}$ \\
		\bottomrule
	\end{tabular}
\end{table}
From the local monodromy we deduce that the fixed curve has four components with genus equal 0 and one with genus equal 1 (we give details of similar computations in a more complicated example $p=23$). Consequently $h^{1,2}(Y)=1$.

%

\medskip
\subsubsection*{The prime $p=23$}
The following pair has an additional common-fiber collision modulo $23$;
the primes $2$ and $3$ also require separate analysis for an integral
model.
\begin{table}[H]
	\centering
	\renewcommand{\arraystretch}{1.2}
	\begin{tabular}{lccccc}
		\toprule
		Singular fiber
		& $\mathrm{IV}$
		& $\mathrm{I}_{5}$
		& $\mathrm{I}_{2}$ & $\mathrm{I}_{1}$ & $\textup{\textbf{-}}$ \\
		\midrule
		\toprule
		Singular fiber
		& $\mathrm{IV}$
		& $\mathrm{I}_{2}$
		& $\mathrm{I}_{5}$ & $\textup{\textbf{-}}$ & $\mathrm{I}_{1}$ \\
		\midrule
		Point on the base & $t=\infty$ & $t=0$ & $t=1$ & $t=\frac{2}{27}$  & $t=\frac{25}{27}$\\
		\bottomrule
	\end{tabular}
\end{table}


\medskip
\subsubsection*{The prime $p=61$}
The following pair gives the configuration labelled by $p=61$ (in
addition to the primes $2$ and $3$ arising from the displayed integral
models):
\begin{table}[H]
	\centering
	\renewcommand{\arraystretch}{1.2}
	\begin{tabular}{lccccc}
		\toprule
		Singular fiber
		& $\mathrm{III}$
		& $\mathrm{I}_{5}$
		& $\mathrm{I}_{3}$ & $\mathrm{I}_{1}$ & $\textup{\textbf{-}}$ \\
		\midrule
		Local monodromies &  (1,2)& (1,2)& $(2,3)$ & (2,3)& $\operatorname{id}$\\
		\midrule
		\toprule
		Singular fiber
		& $\mathrm{III}$
		& $\mathrm{I}_{3}$
(4,8)		& $\mathrm{I}_{5}$ & $\textup{\textbf{-}}$ & $\mathrm{I}_{1}$ \\
		\midrule
		Local monodromies &  (1,3)& (1,2)& $(1,3)$ & $\operatorname{id}$ & (1,2)\\
		\midrule
		Point on the base & $t=\infty$ & $t=0$ & $t=1$ & $t=\frac{3}{128}$  & $t=\frac{125}{128}$\\
		\bottomrule
	\end{tabular}

\medskip
	Then the local monodromy of fixed curve of the involution on the fiber product is
	\begin{center}
	\begin{varwidth}{\textheight}\rowcolors{2}{gray!25}{white}
		\rowcolors{2}{gray!25}{white}
		\resizebox{0.35\textheight}{!}{
			\begin{tabular}{c||c}
				\hhline{~|~}
				\cellcolor{gray!30}\setlength{\fboxrule}{5pt}\fcolorbox{gray!30}{gray!30}{$\infty$} & \cellcolor{gray!30} \setlength{\fboxrule}{5pt}\fcolorbox{gray!30}{gray!30}{$(1,7)(2,6)(3,5)(4,8)(9,11)(13,15)$} \\
				\hhline{=::=}
				\cellcolor{white} \setlength{\fboxrule}{5pt}\fcolorbox{white}{white}{$0$} & \cellcolor{white} \setlength{\fboxrule}{5pt}\fcolorbox{white}{white}{$(1,6)(2,5)(3,7)(4,8)(9,10)(13,14)$} \\
				\hhline{=::=}
				\cellcolor{gray!30}\setlength{\fboxrule}{5pt}\fcolorbox{gray!30}{gray!30}{$1$} & \cellcolor{gray!30}\setlength{\fboxrule}{5pt}\fcolorbox{gray!30}{gray!30}{$(1,3)(5,11)(6,10)(7,9)(8,12)(13,15)$} \\
				\hhline{=::=}
				\cellcolor{white} \setlength{\fboxrule}{5pt}\fcolorbox{white}{white}{$\frac{3}{128}$} & \cellcolor{white} \setlength{\fboxrule}{5pt}\fcolorbox{white}{white}{$(5,9)(6,10)(7,11)(8,12)$} \\
				\hhline{=::=}
				\cellcolor{gray!30}\setlength{\fboxrule}{5pt}\fcolorbox{gray!30}{gray!30}{$\frac{125}{128}$} & \cellcolor{gray!30}\setlength{\fboxrule}{5pt}\fcolorbox{gray!30}{gray!30}{$(1,2)(5,6)(9,10)(13,14)$} \\
			\end{tabular}
		}
	\end{varwidth}
\end{center}
\end{table}
Action of this group has four orbits
\[\{1,2,3,5,6,7,9,10,11,12\}, \{4,8,12\}, \{13,14,15\}, \{16\}.\]
Consequently the fixed curve has four components and its Euler characteristic equals
\[16\times(2-5)+10+10+10+12+12 = 6 = 2+2+2+0.\]
It means that the fixed curve has three components of genus 0 and one components of genus 1 and finally $h^{1,2}(Y)=1$.

\subsection{Kummer fibrations in positive characteristic}
\label{sec10}

Section~\ref{sec6} discussed three self-fiber products in characteristics
$5$ and $7$.  We now consider the standard Kummer involution for the fiber product in Example~\ref{ex:61} induced by
$x\mapsto-x$ on both factors.  This quotient is covered by the
projective cases of Theorem~\ref{thm:kummer}.
%
%
\begin{Exm}
The elliptic surface (example~\ref{ex:61}) over $\FF_5$ given by the Weierstrass equation  
given by
\[
S_5\colon y^2=x^3-2tx+t^4.
\]
Its singular fibers over $t=\infty,0,1$ are of types
$\mathrm{IV}$, $\mathrm{equationIII}$, and
$\mathrm{I}_5$, respectively.

By Theorem~\ref{thm:kummer} there exists a projective crepant resolution of the quotient of the fiber product by the involution, i.e. there is
a Kummer fibration associated to that fiber product which is a smooth projective Calabi--Yau threefold $X_{5}$ defined over $\FF_{5}$.

\end{Exm}

\textbf{Acknowledgements.} The second author was partially supported by National Science Centre, Poland, grant No. 2020/39/B/ST1/03358.

\clearpage

\begingroup
\raggedbottom
\setlength{\textfloatsep}{8pt plus 2pt minus 2pt}
\setlength{\floatsep}{7pt plus 2pt minus 2pt}
\setlength{\intextsep}{7pt plus 2pt minus 2pt}
\renewcommand{\topfraction}{0.95}
\renewcommand{\textfraction}{0.05}
\renewcommand{\floatpagefraction}{0.80}
\setcounter{topnumber}{6}
\setcounter{totalnumber}{10}
\tikzset{
  every picture/.style={scale=0.84},
  legendtext/.style={font=\footnotesize},
  titlelabel/.style={font=\large\bfseries}
}

\section{Appendix -- Kodaira fibers and local analytic models}
\label{sec:app}
\vspace{10mm}

\begin{figure}[H]
\centering
\begin{minipage}[t]{0.32\textwidth}
\hbox{$\mathrm{II}_{}^{}$ ($N_c=1,\;\; e=2$)}
\vspace{4mm}
\centering
\begin{tikzpicture}[x=0.62cm,y=0.62cm]
\draw[fiber, samples=160, smooth, variable=\t, domain=-1.45:1.45]
  plot ({1.90*\t*\t-1.10},{0.55*\t*\t*\t});
\node[markD1] at (-1.10,0.00) {};
\node[compmult] at (1.65,1.00) {$1$};
\node[markD1] at (-2.00,-2.55) {};
\node[legendtext,xshift=0.12cm] at (-1.375,-2.55) {---};
\node[legendtext, anchor=west] at (-0.75,-2.55) {$x^{2}+y^{3}-b$};
\end{tikzpicture}
\end{minipage}\hfill
\begin{minipage}[t]{0.32\textwidth}
\hbox{$\mathrm{III}_{}^{}$ ($N_c=2,\;\; e=3$)}
\vspace{4mm}
\centering
\begin{tikzpicture}[x=0.62cm,y=0.62cm]
\draw[fiber, samples=160, smooth, variable=\x, domain=-2.9:2.9]
  plot (\x,{0.22*\x*\x});
\draw[fiber, samples=160, smooth, variable=\x, domain=-2.9:2.9]
  plot (\x,{-0.22*\x*\x});
\node[markD2] at (0.00,0.00) {};
\node[compmult] at (2.10,1.05) {$1$};
\node[compmult] at (2.10,-1.05) {$1$};
\node[markD2] at (-2.00,-2.55) {};
\node[legendtext,xshift=0.12cm] at (-1.375,-2.55) {---};
\node[legendtext, anchor=west] at (-0.75,-2.55) {$x^{2}+y^{4}-b$};
\end{tikzpicture}
\end{minipage}\hfill
\begin{minipage}[t]{0.32\textwidth}
\hbox{$\mathrm{IV}_{}^{}$ ($N_c=3,\;\; e=4$)}
\vspace{4mm}
\centering
\begin{tikzpicture}[x=0.62cm,y=0.62cm]
\draw[fiber] (-2.85,0.00) -- (2.85,0.00);
\draw[fiber] (-2.20,-1.75) -- (2.20,1.75);
\draw[fiber] (-2.20,1.75) -- (2.20,-1.75);
\node[markD3] at (0.00,0.00) {};
\node[compmult] at (2.10,0.00) {$1$};
\node[compmult] at (1.55,1.25) {$1$};
\node[compmult] at (1.55,-1.25) {$1$};
\node[markD3] at (-2.00,-2.55) {};
\node[legendtext,xshift=0.12cm] at (-1.375,-2.55) {---};
\node[legendtext, anchor=west] at (-0.75,-2.55) {$x^{3}+y^{3}-b$};
\end{tikzpicture}
\end{minipage}

\vspace{10mm}

\label{fig:reduced-compact}
\end{figure}
\begin{figure}[H]
\hbox{$\mathrm{I}_{n}^{}$ ($N_c=n,\;\; e=n$)}
\vspace{6mm}
\centering
  \begin{tikzpicture}[x=1cm,y=1cm,line cap=round,line join=round]
  \coordinate (P1) at (-1.5307,  3.6955);
  \coordinate (P2) at (-3.6955,  1.5307);
  \coordinate (P3) at (-3.6955, -1.5307);
  \coordinate (P4) at (-1.5307, -3.6955);
  \coordinate (P5) at ( 1.5307, -3.6955);
  \coordinate (P6) at ( 3.6955, -1.5307);
  \coordinate (P7) at ( 3.6955,  1.5307);
  \coordinate (P8) at ( 1.5307,  3.6955);

  \draw[fiber] ($(P1)!-0.18!(P2)$) -- ($(P2)!-0.18!(P1)$);
  \draw[fiber] ($(P2)!-0.18!(P3)$) -- ($(P3)!-0.18!(P2)$);
  \draw[fiber] ($(P3)!-0.18!(P4)$) -- ($(P4)!-0.18!(P3)$);
  \draw[fiber] ($(P4)!-0.18!(P5)$) -- ($(P5)!-0.18!(P4)$);
  \draw[fiber] ($(P5)!-0.18!(P6)$) -- ($(P6)!-0.18!(P5)$);
  \draw[fiber] ($(P6)!-0.18!(P7)$) -- ($(P7)!-0.18!(P6)$);
  \draw[fiber] ($(P7)!-0.18!(P8)$) -- ($(P8)!-0.18!(P7)$);

  \node[markA2] at (P1) {};
  \node[markA2] at (P2) {};
  \node[markA2] at (P3) {};
  \node[markA2] at (P4) {};
  \node[markA2] at (P5) {};
  \node[markA2] at (P6) {};
  \node[markA2] at (P7) {};
  \node[markA2] at (P8) {};

  \node[compmult] at ($(P8)!0.5!(P1)$) {$\cdots$};

  \node[compmult] at ($(P1)!0.5!(P2)$) {$1$};
  \node[compmult] at ($(P2)!0.5!(P3)+(-0.42,0)$) {$1$};
  \node[compmult] at ($(P3)!0.5!(P4)$) {$1$};
  \node[compmult] at ($(P4)!0.5!(P5)$) {$1$};
  \node[compmult] at ($(P5)!0.5!(P6)$) {$1$};
  \node[compmult] at ($(P6)!0.5!(P7)+(0.42,0)$) {$1$};
  \node[compmult] at ($(P7)!0.5!(P8)$) {$1$};

  \draw[decorate,decoration={brace,mirror,amplitude=6pt},line width=0.9pt]
    (-3.95,-4.25) -- (3.95,-4.25)
    node[midway,below=7pt,legendtext] {$n$ components in the cycle};

  \node[markA2] at (5.95,0.20) {};
  \node[legendtext,xshift=0.12cm] at (6.475,0.20) {---};
  \node[legendtext, anchor=west] at (7.00,0.20) {$xy-b$};

  \end{tikzpicture}
\label{fig:In-schematic}
\end{figure}

\vspace{10mm}

\begin{figure}[H]
\hbox{$\mathrm{I}_{n}^{*}$ ($N_c=n+5,\;\; e=n+6$)}
\vspace{10mm}
\centering
\begin{tikzpicture}[x=1.05cm,y=1.05cm]
\draw[fiber,name path=C1] (-5.60,-1.15) -- (-3.80,1.15);
\draw[fiber,name path=C2] (-4.80,1.15) -- (-3.00,-1.15);
\draw[fiber,name path=C3] (-4.00,-1.15) -- (-2.20,1.15);
\draw[fiber,name path=C4] (-3.20,1.15) -- (-1.40,-1.15);
\draw[fiber,name path=C5] (0.90,-1.15) -- (2.70,1.15);
\draw[fiber,name path=C6] (1.70,1.15) -- (3.50,-1.15);
\draw[fiber,name path=C7] (2.50,-1.15) -- (4.30,1.15);
\draw[fiber,name path=C8] (3.30,1.15) -- (5.10,-1.15);
\node[titlelabel] at (-0.25,0.05) {$\cdots$};

\coordinate (M1) at ($(-5.60,-1.15)!0.5!(-3.80,1.15)$);
\coordinate (M2) at ($(-4.80,1.15)!0.5!(-3.00,-1.15)$);
\coordinate (M3) at ($(-4.00,-1.15)!0.5!(-2.20,1.15)$);
\coordinate (M4) at ($(-3.20,1.15)!0.5!(-1.40,-1.15)$);
\coordinate (M5) at ($(0.90,-1.15)!0.5!(2.70,1.15)$);
\coordinate (M6) at ($(1.70,1.15)!0.5!(3.50,-1.15)$);
\coordinate (M7) at ($(2.50,-1.15)!0.5!(4.30,1.15)$);
\coordinate (M8) at ($(3.30,1.15)!0.5!(5.10,-1.15)$);

\draw[fiber,name path=S1] (-7.45,0.60) -- (-5.10,-1.10);
\draw[fiber,name path=S2] (-7.15,0.98) -- (-4.80,-0.72);
\draw[fiber,name path=S3] (4.30,-0.72) -- (6.65,0.98);
\draw[fiber,name path=S4] (4.60,-1.10) -- (6.95,0.60);

\path[name intersections={of=C1 and C2, by=I12}];
\path[name intersections={of=C2 and C3, by=I23}];
\path[name intersections={of=C3 and C4, by=I34}];
\path[name intersections={of=C5 and C6, by=I56}];
\path[name intersections={of=C6 and C7, by=I67}];
\path[name intersections={of=C7 and C8, by=I78}];
\path[name intersections={of=C1 and S1, by=J11}];
\path[name intersections={of=C2 and S2, by=J22}];
\path[name intersections={of=C1 and S2, by=J12}];
\path[name intersections={of=C7 and S3, by=J33}];
\path[name intersections={of=C8 and S4, by=J44}];
\path[name intersections={of=C8 and S3, by=J43}];

\coordinate (L1) at ($(J11)!0.5!(I12)+(0,0.18)$);
\coordinate (L2) at ($(I12)!0.5!(I23)$);
\coordinate (L3) at ($(I23)!0.5!(I34)$);
\coordinate (L4) at ($(I34)!0.5!(-1.40,-1.15)$);
\coordinate (L6) at ($(I56)!0.5!(I67)$);
\coordinate (L7) at ($(I67)!0.5!(I78)$);
\coordinate (L8) at ($(I78)!0.5!(J44)+(0,0.18)$);

\node[markA1] at (I12) {};
\node[markA1] at (I23) {};
\node[markA1] at (I34) {};
\node[markA1] at (I56) {};
\node[markA1] at (I67) {};
\node[markA1] at (I78) {};
\node[markA2] at (J11) {};
\node[markA2] at (J22) {};
\node[markA2] at (J12) {};
\node[markA2] at (J33) {};
\node[markA2] at (J44) {};
\node[markA2] at (J43) {};

\node[compmult] at ($(-7.45,0.60)!0.32!(-5.10,-1.10)$) {$1$};
\node[compmult] at ($(-7.15,0.98)!0.32!(-4.80,-0.72)$) {$1$};
\node[compmult] at ($(4.30,-0.72)!0.68!(6.65,0.98)$) {$1$};
\node[compmult] at ($(4.60,-1.10)!0.68!(6.95,0.60)$) {$1$};
\node[compmult] at (L1) {$2$};
\node[compmult] at (L2) {$2$};
\node[compmult] at (L3) {$2$};
\node[compmult] at (L4) {$2$};
\node[compmult] at (M5) {$2$};
\node[compmult] at (L6) {$2$};
\node[compmult] at (L7) {$2$};
\node[compmult] at (L8) {$2$};

\draw[decorate,decoration={brace,mirror,amplitude=6pt},line width=0.9pt]
  (-5.60,-1.60) -- (5.60,-1.60)
  node[midway,below=7pt,legendtext] {$n+1$ components in the chain};

\node[markA1] at (-4.70,-3) {};
\node[legendtext,xshift=0.12cm] at (-4.175,-3) {---};
\node[legendtext, anchor=west] at (-3.65,-3) {$x^{2}y^{2}-b$};
\node[markA2] at (-1.15,-3) {};
\node[legendtext,xshift=0.12cm] at (-0.625,-3) {---};
\node[legendtext, anchor=west] at (-0.10,-3) {$xy^{2}-b$};

\end{tikzpicture}
\label{fig:Instar-schematic}
\end{figure}

\begin{figure}[H]
\hbox{$\mathrm{IV}^{*}$ ($N_c=7,\;\; e=8$)}
\vspace{5mm}
\centering
\begin{tikzpicture}[x=1.35cm,y=1.35cm]


\draw[fiber] (0,3.0) -- (0,-3.0);

\draw[fiber] (-1.0, 1.55) -- (2.25, 2.14);
\draw[fiber] (1.05, 2.12) -- (3.90, 1.62);

\draw[fiber] (-1.0, 0.00) -- (2.25, 0.59);
\draw[fiber] (1.05, 0.57) -- (3.90, 0.07);

\draw[fiber] (-1.0,-1.55) -- (2.25,-0.96);
\draw[fiber] (1.05,-0.97) -- (3.90,-1.47);


\node[markA3] (p1) at (0, 1.73) {};
\node[markA3] (p2) at (0, 0.18) {};
\node[markA3] (p5) at (0,-1.37) {};

\node[markA2] (p3) at (1.60, 2.02) {};
\node[markA2] (p4) at (1.60, 0.47) {};
\node[markA2] (p6) at (1.63,-1.07) {};




\node[compmult] at (0.00, 2.35) {$3$};

\node[compmult] at (0.82, 1.90) {$2$};
\node[compmult] at (3.05, 1.77) {$1$};

\node[compmult] at (0.82, 0.35) {$2$};
\node[compmult] at (2.95, 0.24) {$1$};

\node[compmult] at (0.77,-1.21) {$2$};
\node[compmult] at (2.98,-1.31) {$1$};


\node[markA2] at (6.15, 0.95) {};
\node[legendtext,xshift=0.12cm] at (6.70, 0.95) {---};
\node[legendtext, anchor=west] at (7.25, 0.95) {$xy^{2}-b$};

\node[markA3] at (6.15, 0.20) {};
\node[legendtext,xshift=0.12cm] at (6.70, 0.20) {---};
\node[legendtext, anchor=west] at (7.25, 0.20) {$x^{2}y^{3}-b$};

\end{tikzpicture}
\label{fig:IVstar-pretty}
\end{figure}

\begin{figure}[H]
\hbox{$\mathrm{III}^{*}$ ($N_c=8,\;\; e=9$)}
\vspace{2mm}
\centering
\begin{tikzpicture}[x=1.25cm,y=1.25cm,scale=1.08]

\begin{scope}[xshift=-1.2cm]
\draw[fiber] (0,2.7) -- (0,-2.7);             
\draw[fiber] (0.85,2.7) -- (0.85,0.85);       
\draw[fiber] (0.85,-0.85) -- (0.85,-2.7);     
\draw[fiber] (-0.35,1.80) -- (1.45,1.80);     
\draw[fiber] (0.62,1.30) -- (1.72,1.30);      
\draw[fiber] (-0.35,0.00) -- (1.45,0.00);     
\draw[fiber] (0.62,-1.30) -- (1.72,-1.30);    
\draw[fiber] (-0.35,-1.80) -- (1.45,-1.80);   

\node[compmult] at (0.00,2.45) {$4$};   
\node[compmult] at (0.42,1.80) {$3$};   
\node[compmult] at (0.85,2.35) {$2$};   
\node[compmult] at (1.27,1.30) {$1$};   
\node[compmult] at (0.55,0.00) {$2$};   
\node[compmult] at (0.42,-1.80) {$3$};  
\node[compmult] at (0.85,-2.35) {$2$};  
\node[compmult] at (1.27,-1.30) {$1$};  

\node[markB7] at (0,1.80) {};      
\node[markB6] at (0,0.00) {};      
\node[markB7] at (0,-1.80) {};     
\node[markB5] at (0.85,1.80) {};   
\node[markB4] at (0.85,1.30) {};   
\node[markB5] at (0.85,-1.80) {};  
\node[markB4] at (0.85,-1.30) {};  

\end{scope}

\node[markB4] at (2.40,0.20) {};
\node[legendtext,xshift=0.12cm] at (2.925,0.20) {---};
\node[legendtext, anchor=west] at (3.45,0.20) {$xy^{2}-b$};

\node[markB5] at (2.40,2.15) {};
\node[legendtext,xshift=0.12cm] at (2.925,2.15) {---};
\node[legendtext, anchor=west] at (3.45,2.15) {$x^{2}y^{3}-b$};

\node[markB6] at (2.40,1.50) {};
\node[legendtext,xshift=0.12cm] at (2.925,1.50) {---};
\node[legendtext, anchor=west] at (3.45,1.50) {$x^{2}y^{4}-b$};

\node[markB7] at (2.40,0.85) {};
\node[legendtext,xshift=0.12cm] at (2.925,0.85) {---};
\node[legendtext, anchor=west] at (3.45,0.85) {$x^{3}y^{4}-b$};

\end{tikzpicture}
\label{fig:IIIstar-pretty}
\end{figure}

\begin{figure}[H]
\hbox{$\mathrm{II}^{*}$ ($N_c=9,\;\; e=10$)}
\vspace{2mm}
\centering
\begin{tikzpicture}[x=1.08cm,y=1.08cm,scale=1.08]
\begin{scope}
\draw[fiber] (0,3.2) -- (0,-3.2);             
\draw[fiber] (-0.3,2.2) -- (2.8,2.2);         
\draw[fiber] (-0.3,0.0) -- (4.8,0.0);         
\draw[fiber] (-0.3,-2.2) -- (4.8,-2.2);       
\draw[fiber] (3.2,-1.0) -- (5.8,1.6);         
\draw[fiber] (3.2,-3.2) -- (5.8,-0.6);        
\draw[fiber] (4.8,-1.1) -- (7.9,-1.1);        
\draw[fiber] (6.8,-2.0) -- (8.9,0.1);         
\draw[fiber] (8.2,0.0) -- (10.8,0.0);         
\draw[fiber] (8.4,-0.4) -- (9.6,0.8);         

\node[compmult] at (0.00,-0.55) {$6$};
\node[compmult] at (0.90,2.20) {$3$};
\node[compmult] at (4.90,0.70) {$2$};
\node[compmult] at (3.70,-2.70) {$4$};
\node[compmult] at (8.25,-0.55) {$2$};
\node[compmult] at (6.50,-1.10) {$3$};
\node[compmult] at (9.80,0.00) {$1$};
\node[compmult] at (2.10,0.00) {$4$};
\node[compmult] at (3.00,-2.20) {$5$};

\node[markC10] at (0,2.2) {};     
\node[markC12] at (0,0.0) {};     
\node[markC13] at (0,-2.2) {};    
\node[markC9]  at (4.2,0.0) {};   
\node[markC11] at (4.2,-2.2) {};  
\node[markC8]  at (5.3,-1.1) {};  
\node[markC7]  at (7.7,-1.1) {};  
\node[markC6]  at (8.8,0.0) {};   
\end{scope}

\node[markC6]  at (0.0,-4.75) {};
\node[legendtext, anchor=west] at (0.45,-4.75) {---};
\node[legendtext, anchor=west] at (1.00,-4.75) {$x^{2}y-b$};
\node[markC7]  at (0.0,-5.40) {};
\node[legendtext, anchor=west] at (0.45,-5.40) {---};
\node[legendtext, anchor=west] at (1.00,-5.40) {$x^{2}y^{3}-b$};

\node[markC8]  at (3.2,-4.75) {};
\node[legendtext, anchor=west] at (3.65,-4.75) {---};
\node[legendtext, anchor=west] at (4.20,-4.75) {$x^{3}y^{4}-b$};
\node[markC9]  at (3.2,-5.40) {};
\node[legendtext, anchor=west] at (3.65,-5.40) {---};
\node[legendtext, anchor=west] at (4.20,-5.40) {$x^{4}y^{2}-b$};

\node[markC10] at (6.4,-4.75) {};
\node[legendtext, anchor=west] at (6.85,-4.75) {---};
\node[legendtext, anchor=west] at (7.40,-4.75) {$x^{3}y^{6}-b$};
\node[markC11] at (6.4,-5.40) {};
\node[legendtext, anchor=west] at (6.85,-5.40) {---};
\node[legendtext, anchor=west] at (7.40,-5.40) {$x^{5}y^{4}-b$};

\node[markC12] at (9.6,-4.75) {};
\node[legendtext, anchor=west] at (10.05,-4.75) {---};
\node[legendtext, anchor=west] at (10.60,-4.75) {$x^{4}y^{6}-b$};
\node[markC13] at (9.6,-5.40) {};
\node[legendtext, anchor=west] at (10.05,-5.40) {---};
\node[legendtext, anchor=west] at (10.60,-5.40) {$x^{6}y^{5}-b$};

\end{tikzpicture}
\label{fig:IIstar-pretty}
\end{figure}

\FloatBarrier
\endgroup

\clearpage
\begin{thebibliography}{99}
	\bibitem{Artin} M.~Artin, {\em Algebraic construction of Brieskorn's resolutions.}  J. Algebra  29  (1974), 330--348.

	\bibitem{Clemens} C.~H.~Clemens, \emph{Double solids.} Adv.\ in
	Math. \textbf{47} (1983), 107--230.

	\bibitem{Herfurtner}S.~Herfurtner,
	\emph{Elliptic surfaces with four singular fibers}.
	Math. Ann. 291 (1991), no. 2, 319--342.

	\bibitem{HirokadoItoSaito}
	M.~Hirokado, H.~Ito, N.~Saito,
	\emph{Calabi--Yau threefolds arising from fiber products of rational
	quasi-elliptic surfaces, I}.
	Ark. Mat. 45 (2007), no. 2, 279--296.

	\bibitem{KK}G.~Kapustka, M.~Kapustka,
	\emph{Fiber products of elliptic surfaces with section and associated Kummer fibrations}
	Internat. J. Math. 20 (2009), no. 4, 401--426.

	\bibitem{KapustkaKum} M.~Kapustka,
	\emph{Correspondences between modular Calabi--Yau fiber products},
	Manuscripta Math. 130 (2009), no. 1, 121--135.
	\bibitem{Kollar}J. Koll\'ar,
	Flops.
	Nagoya Math. J. 113 (1989), 15--36.

	\bibitem{KollarMori} J.~Koll\'ar, S.~Mori,
	\emph{Birational geometry of algebraic varieties},
	Cambridge Tracts in Mathematics 134, Cambridge University Press,
	Cambridge, 1998.

	\bibitem{Laufer}B. B. Laufer, \emph{On $\mathbb C\mathbb P^{1}$ as an exceptional set}. Recent developments in several complex variables (Proc. Conf., Princeton Univ., Princeton, N. J., 1979), pp. 261--275
	Ann. of Math. Stud., No. 100.
	\bibitem{Lin}H.-W.~Lin,
		\emph{On crepant resolution of some hypersurface singularities and a criterion for UFD}.
		Trans. Amer. Math. Soc. 354 (2002), no. 5, 1861--1868.
	\bibitem{LANG} W.~E.~Lang,
	\emph{Extremal rational elliptic surfaces in characteristic \(p\). II:
	Surfaces with three or fewer singular fibres}.
	Ark. Mat. 32 (1994), no. 2, 423--448.
	\bibitem{MirandaPersson} R.~Miranda, U.~Persson,
	\emph{On extremal rational elliptic surfaces}.
	Math. Z. 193 (1986), 537--558.
	\bibitem{Miranda} R.~Miranda,
	\emph{The basic theory of elliptic surfaces},
	Dottorato di Ricerca in Matematica, ETS Editrice, Pisa, 1989.
	\bibitem{Schoen} C.~Schoen, {\em On Fiber Products of
		Rational Elliptic Surfaces with Section}, Math. Z. \textbf{197}
	(1988),
	177--199.

	\bibitem{Batyrev} V.~V.~Batyrev,
	\emph{Birational Calabi--Yau $n$-folds have equal Betti numbers},
	in: New trends in algebraic geometry (Warwick, 1996),
	London Math. Soc. Lecture Note Ser. 264, Cambridge Univ. Press, 1999, 1--11.

	\bibitem{BatyrevDais} V.~V.~Batyrev, D.~I.~Dais,
	\emph{Strong McKay correspondence, string-theoretic Hodge numbers and
	mirror symmetry},
	Topology 35 (1996), no.~4, 901--929.

	\bibitem{CR} W.~Chen, Y.~Ruan,
	\emph{A new cohomology theory of orbifold},
	Comm. Math. Phys. 248 (2004), no. 1, 1--31.

	\bibitem{mirandacurves} R.~Miranda,
	\emph{Algebraic curves and Riemann surfaces},
	Graduate Studies in Mathematics 5, American Mathematical Society, Providence, RI, 1995.

	\bibitem{Schuett}M.~Sch\"utt, {\em New examples of modular rigid
	Calabi--Yau threefolds}.  Collect. Math.  55  (2004),  no. 2,
	219--228.

	\bibitem{Szafarewicz} I.~R.~\v Safarevi\v c,
	\emph{Basic algebraic geometry 1. Varieties in projective space},
	Springer-Verlag, Berlin, 1994.

	\bibitem{Werner} J. Werner, \emph{Kleine Aufl\"osungen spezieller
		dreidimensionaler Variet\"aten}, Bonner Math. Schriften \textbf{186}
	(1987).

	\bibitem{CynkVanStraten} S.~Cynk, D.~van Straten,
	\emph{Small resolutions and non-liftable Calabi--Yau threefolds}.
	Manuscripta Math. 130 (2009), no. 2, 233--249.

	\bibitem{NorikoUpdate} N.~Yui,
	\emph{Update on the modularity of Calabi--Yau varieties}, with an
	appendix by H.~Verrill,
	in: \emph{Calabi--Yau Varieties and Mirror Symmetry},
	Fields Inst. Commun. 38, Amer. Math. Soc., Providence, RI, 2003,
	307--362.
\end{thebibliography}
\end{document}